\newtheorem{theorem}{Theorem}[section]% meant for sectionwise numbers
\newtheorem{proposition}[theorem]{Proposition}% 
\newtheorem{remark}[theorem]{Remark} %
\newtheorem{definition}[theorem]{Definition} %
\newtheorem{lemma}[theorem]{Lemma}
\newtheorem{assumption}[theorem]{Assumption}
\numberwithin{equation}{section}
  \def\sC {{\mathcal C}}
 \def\sE {{\mathcal E}} \def\sF {{\mathcal F}}
  \def\sX {{\mathcal X}}
 \def\bN {{\mathbb N}} 
  \def\bR {{\mathbb R}}
\def\bS {{\mathbb S}}
\def\wt{\widetilde}
\def\wh{\widehat}
\def\vp{\varphi}
\def\Cap{\operatorname{Cap}}
\newcommand\restr[2]{{% we make the whole thing an ordinary symbol
		\left.\kern-\nulldelimiterspace % automatically resize the bar with \right
		#1 % the function
		\vphantom{\big|} % pretend it's a little taller at normal size
		\right|_{#2} % this is the delimiter
}} %restriction of a function
\def\eps{\epsilon}
\newcommand{\contfunc}{C}
\newcommand{\abs}[1]{{\left\vert\kern-0.25ex #1
		\kern-0.25ex\right\vert}}
\newcommand\norm[1]{\left\lVert#1\right\rVert} 
\def\grad{\nabla}
\def\q{\quad} \def\qq{\qquad}
\newtheorem*{notation}{Notation}
\newcommand{\one}{\mathds{1}} %indicator
\def\diam{{\mathop{{\rm diam }}}}
\def\ol{\overline}      
\newcommand{\arxiv}[1]{{\tt \href{http://arxiv.org/abs/#1}{arXiv:#1}}}     
\newcommand{\on}[1]{\operatorname{ #1}}
\def\supp{\mathop{{\rm supp}}}
\newcommand{\loc}[0]{\operatorname{loc}}
\newcommand{\set}[1]{\left\{ #1 \right\}}
\begin{document}

\title[Reflected diffusion   on uniform domains]{Heat kernel for reflected diffusion and extension property on uniform domains}

%%=============================================================%%
%% Prefix	-> \pfx{Dr}
%% GivenName	-> \fnm{Joergen W.}
%% Particle	-> \spfx{van der} -> surname prefix
%% FamilyName	-> \sur{Ploeg}
%% Suffix	-> \sfx{IV}
%% NatureName	-> \tanm{Poet Laureate} -> Title after name
%% Degrees	-> \dgr{MSc, PhD}
%% \author*[1,2]{\pfx{Dr} \fnm{Joergen W.} \spfx{van der} \sur{Ploeg} \sfx{IV} \tanm{Poet Laureate} 
%%                 \dgr{MSc, PhD}}\email{iauthor@gmail.com}
%%=============================================================%%

\author{\fnm{Mathav} \sur{Murugan} \protect\footnotemark[0]}\email{mathav@math.ubc.ca}

\affil{\orgdiv{Department of Mathematics}, \orgname{The University of British Columbia}, \orgaddress{\street{1984 Mathematics Road}, \city{Vancouver}, \postcode{V6T 1Z2}, \state{British Columbia}, \country{Canada}}}

\renewcommand{\thefootnote}{}
\footnotetext{The author is partially supported by NSERC and the Canada research chairs program.}
\renewcommand{\thefootnote}{\arabic{footnote}}
\setcounter{footnote}{0}

%%==================================%%
%% sample for unstructured abstract %%
%%==================================%%

\abstract{We study reflected diffusion on uniform domains where the underlying space admits a symmetric diffusion that satisfies sub-Gaussian heat kernel estimates.
	A celebrated theorem of Jones (Acta Math. 1981) states that uniform domains in      Euclidean space are extension domains for Sobolev spaces. In this work, we obtain a similar extension property for  metric spaces equipped with a Dirichlet form   whose heat kernel satisfies a sub-Gaussian estimate. 
	We introduce a scale-invariant version of this extension property and  apply it to show that the reflected diffusion process on such a uniform domain inherits various properties from the ambient space, such as Harnack inequalities, cutoff energy inequality, and sub-Gaussian heat kernel bounds. 
	In particular, our work extends  Neumann heat kernel estimates of Gyrya and Saloff-Coste (Ast\'erisque 2011) beyond the Gaussian space-time scaling. 
	Furthermore, our estimates on the extension operator imply that the energy measure of the boundary of a uniform domain is always zero. This property of the energy measure is a broad generalization of Hino's result (PTRF 2013) that proves the vanishing of the energy measure on the outer square boundary of the standard Sierpi\'nski carpet equipped with the self-similar Dirichlet form.}

\keywords{Reflected diffusion, sub-Gaussian heat kernel estimate, extension problem, uniform domains, Whitney cover, Dirichlet form.}

%%\pacs[JEL Classification]{D8, H51}

\pacs[MSC Classification]{31C25, 31E05, 35K08,  60J46, 60J60}

\maketitle

\section{Introduction}\label{sec1}

The goal of this work is obtaining heat kernel  (transition probability) estimates for reflected diffusion on `nice domains' when the underlying space admits a symmetric diffusion process with sub-Gaussian heat kernel estimates. 
We wish to understand if the corresponding reflected diffusion on these domains also satisfy similar heat kernel estimates as the underlying space. 
A more general guiding question is the following: {\em what properties are inherited by a domain from the ambient space?}

Gaussian estimates of the heat kernel  for symmetric diffusion have been known to hold in a broad range of settings including manifolds with non-negative Ricci curvature \cite{LY}, uniformly elliptic operators \cite{Aro,Sal92}, weighted manifolds \cite[Theorem 7.1]{GrS}, Lie groups of polynomial growth \cite[Chapter IV]{VSC} and many other examples \cite[Section 3.3]{Sal10}. 
This Gaussian heat kernel estimate is congruous with the  \emph{usual} space-time scaling  property that the expected exit time from a ball of radius $r$ grows like $r^2$. Sub-Gaussian estimate on the heat kernel allows for a richer possibility of space-time scaling where the    expected exit time from a ball of radius $r$ grows like $\Psi(r)$, where $\Psi:(0,\infty) \to (0,\infty)$ is a function that governs the space-time scaling. Sub-Gaussian heat kernel estimates were first established by Barlow and Perkins for the Brownian motion on Sierpi\'nski gasket \cite{BP} and was later shown to hold in various  fractals by several authors \cite{FHK, BB96, BH, Kum, Lin}. We refer to  Barlow's monograph for an introduction to diffusion on fractals \cite{Bar98}.

The `nice domains' we consider in this work are uniform domains. 
Uniform domains were introduced independently by Martio and Sarvas \cite{MS} and
Jones \cite{Jon81}. This class includes Lipschitz domains, and more generally non-tangentially accesible (NTA) domains. Uniform domains are relevant in various contexts  such as   extension property \cite{Jon81, HeK}, Gromov hyperbolicity \cite{BHK}, boundary Harnack principle \cite{Aik}, geometric function theory \cite{MS,GH,Geh}, and heat kernel estimates \cite{GS, CKKW}.  Uniform domains are abundant. Every bounded domain   can be approximated by a uniform domain on a large class of metric spaces \cite[Theorem 1.1]{Raj}.

A novel feature of the work is to use  \emph{extension problem} to obtain  heat kernel estimates for reflected diffusion on a domain.
Given a space of functions $\sF(U)$ in a domain $U$ with $U \subset X$  and a  space of functions $\sF(X)$ in the underlying space $X$, the \emph{extension problem} asks if every function in $\sF(U)$ can be extended to a function belonging to $\sF(X)$. Often there are additional requirements on the extension such as linearity and boundedness. We refer the reader to Stein's book \cite[Chapter VI]{Ste} for a nice introduction to extension problem of functions in Sobolev space  on Lipschitz domains in $\mathbb{R}^n$.
In our work, the relevant function spaces will be domain of Dirichlet forms. Our function spaces can be thought of as an abstraction of the $W^{1,2}$ Sobolev space (function and its first order distributional derivatives are in $L^2$).   While heat kernel estimates and extension problems have been extensively studied,  using the extension property to obtain heat kernel estimates is new, at least to the author's knowledge. 

To explain the relationship between reflected diffusion and the extension problem, we recall the constructions of reflected Brownian motion on Euclidean domains. The   different constructions   using stochastic differential equation (SDE)   and Dirichlet forms   have their origins in the works of Skorokhod \cite{S} and Fukushima \cite{F} respectively. These approaches were studied by various authors \cite{LS,Che,Tan}. We refer to the introduction of \cite{Che} for a nice overview of these two approaches and a more complete list of references. 
For a smooth domain $U$ in $\bR^n$, the SDE approach involves solving the stochastic differential equation 
\[
Y(t) = Y(0)+ B(t)+ \int_0^t \vec{n}(Y(s)) \, dL_s,
\]
where $B(t)$ is the standard Brownian motion on $\mathbb{R}^n$, $L_s$ is the `boundary local time' of the process $Y(s)$ and $\vec{n}(x)$ is the inward pointing unit normal vector at $x \in \partial U$. Heuristically, the last term is responsible for `pushing the diffusion $Y(t)$ back into the domain' when it hits the boundary so that it stays in $\overline{U}$.

Given a smooth domain $U$, the Dirichlet form approach involves the bilinear form
\[
\sE_U(f,f):= \frac{1}{2} \int_U \abs{\grad f}^2(x) \, dx,
\]
for all $f \in W^{1,2}(U)$, where $\nabla f$ denotes the distributional gradient of $f$ and $W^{1,2}(U)$ denotes the subspace of functions in $L^2(U)$  whose distributional first order partial derivatives are also in $L^2(U)$. Using the theory of Dirichlet forms, Fukushima \cite{F} constructs a Markov process with continuous sample paths in some abstract closure of $U$ (called the Martin-Kuramochi compactification).

If $U$ is a smooth domain (or more generally, a uniform domain) this abstract closure can be identified with $\overline{U}$.
As pointed out in \cite[p.5]{BCR}, if $U$ is a $W^{1,2}$-Sobolev extension domain (that is, there is a bounded linear operator $E: W^{1,2}(U) \to W^{1,2}(\bR^n)$), then the Dirichlet form approach yields a Markov process with continuous paths in $\overline{ U}$. These two approaches lead to the \emph{same process} on smooth domains.
The Dirichlet form approach has the advantage that it also  works when the domain is not smooth and more importantly when the ambient space is not smooth. 

Gyrya and Saloff-Coste show that for any symmetric diffusion satisfying Gaussian heat kernel bounds, the reflected diffusion on any uniform domain (or more generally, inner uniform domains) also satisfies Gaussian heat kernel estimates \cite[Theorem 3.10]{GS}. A natural question is whether a similar result is true for more general space-time scaling given by sub-Gaussian heat kernel bounds.  By a celebrated theorem of Jones \cite[Theorem 1]{Jon81}, there is a bounded linear extension map $E: W^{1,2}(U) \to W^{1,2}(\mathbb{R}^n)$ for any uniform domain (Jones' theorem is valid for a more general Sobolev space $W^{k,p}$). Since $W^{1,2}(U)$ is the domain of the Dirichlet form for the reflected diffusion on $U$, one could ask if this is a general phenomenon for any reflected diffusion on a uniform domain where the diffusion on the ambient space satisfies   sub-Gaussian heat kernel estimate. Our main result answers these questions. As mentioned earlier, we use the extension result to obtain heat kernel bounds. Stated informally, our main results are: 
\begin{enumerate}[(i)]
	\item For any  symmetric diffusion satisfying sub-Gaussian heat kernel bounds, the corresponding reflected diffusion on any uniform domain  also satisfies sub-Gaussian heat kernel estimates with the same space-time scaling (Theorem \ref{t:hkeunif}).
	\item In the same setting as (i), there is a bounded linear \emph{extension} map from the domain of the Dirichlet form for the reflected diffusion on a uniform domain to the domain of the Dirichlet form for the diffusion on the ambient space. This extension map is   bounded \emph{at all locations and scales} (Theorem \ref{t:extend}).
	\item In the same setting as (i), the energy measure of any function in the domain of the Dirichlet form on the boundary of any uniform domain is identically zero (Theorem \ref{t:benergy}).
\end{enumerate}
As mentioned above the results (i) and (ii) above can be viewed as analogues of  \cite[Theorem 3.10]{GS} and  \cite[Theorem 1]{Jon81} respectively. For a specific uniform domain  on the Sierpi\'nski carpet, the result (iii) was obtained by Hino \cite[Proposition 4.15]{Hin13}.

%The motivation behind our work is to extend the heat kernel estimates  \cite{GS} for reflected diffusion  on uniform domains beyond the Gaussian space-time scaling.
Since the domain of the Dirichlet form for reflected diffusion is the Sobolev space $W^{1,2}(U)$, it suggests that the extension problem for the Sobolev space $W^{1,2}$  could be relevant for studying reflected diffusion.
To explain this connection to the extension problem, we recall that sub-Gaussian   heat kernel estimate for symmetric diffusion can be characterized by the volume doubling property and functional inequalities such as the Poincar\'e inequality and cutoff Sobolev inequality. This characterization goes back to the works of Grigor'yan\cite{Gri} and Saloff-Coste\cite{Sal92} in the Gaussian case and Barlow and Bass in the sub-Gaussian case \cite{BB04} with many other important contributions \cite{Stu96,BBK, AB,GHL15} (see Theorem \ref{t:hke}).

While the proof of Poincar\'e inequality for the reflected diffusion follows the same line of reasoning as \cite{GS}, the proof of the cutoff Sobolev inequality for the Dirichlet form corresponding to the reflected diffusion requires new ideas. Indeed, the characterization of Gaussian heat kernel estimate does not require cut-off Sobolev inequality.
We use the extension property to obtain cutoff Sobolev inequality for the Dirichlet form corresponding to the reflected diffusion using the similar property in the larger ambient space (Proposition \ref{p:cs}). 
In other words, we obtain a functional inequality in a domain using the corresponding  inequality in the ambient space using the extension property. Since the cut-off Sobolev has a local and scale-invariant nature, we need to show certain scale-invariant estimates on the extension operator (cf. \eqref{e:ebndloc1}, \eqref{e:ebndloc2} in Theorem \ref{t:extend}).
These \emph{scale-invariant} bounds on the extension operator seem to be new and we believe  is of independent interest.
By scale-invariant bounds, we mean that the $L^2$ norm and energy measure of the extended function on all balls $B(x,r)$ centered in the domain $U$ can be bounded by the corresponding quantities for comparable balls $B(x,Kr) \cap U$ contained in the domain. The constants involved in these bounds are uniform in the location $x$ and scale $r$.
Although our motivation behind obtaining the extension property is to prove heat kernel estimates, there are several works over a long period of time on the extension property for its intrinsic interest and other applications;  cf. \cite{Whi,Cal61,Ste, GV,HeK, Shv,R, HKT} and their references.

The cut-off Sobolev inequality obtained using  scale-invariant bounds on the extension operator along  Poincar\'e inequality for the reflected 
diffusion  generalizes the result of Gyrya and Saloff-Coste to sub-Gaussian heat kernel on uniform domains (Theorem \ref{t:hkeunif}). 
However, drawback of this work compared to \cite{GS} is that we cannot handle \emph{inner} uniform domains because the extension property can fail on such domains. It would therefore be   desirable to develop an intrinsic approach to the proof of cutoff Sobolev inequality that does not rely on the extension property.

Our construction of the extension operator is based on Whitney covers of the domain $U$ and $V= (\overline{U})^c$. 
The use Whitney cover to extend functions has a long history. Constructing differentiable extensions was the original motivation behind Whitney's construction of his eponymous cubes \cite{Whi}. 
This was later adapted by Calderon and Stein \cite{Cal61, Ste} to construct extension of Sobolev functions on Lipschitz domains, and by Jones \cite{Jon81} on locally uniform domains.  Bj\"orn and Shanmugalingam use it to extend Newton-Sobolev functions  on uniform domains in metric spaces satisfying doubling property and  Poincar\'e inequality \cite{BS}.

While our construction of the extension operator is similar to earlier works of Jones \cite{Jon81} and Bj\"orn-Shanmugalingam \cite{BS}, we need a new approach to prove that the extension operator is bounded.
This is because the previous approaches to the Sobolev extension problem  in \cite{Ste, Jon81, BS} relied on point-wise upper bounds on gradient (or upper gradient) of the extension to control the Sobolev norm. However in the setting of  Dirichlet forms such \emph{point-wise estimates on gradient are not meaningful} in general since the energy measure can be singular with respect to the symmetric measure \cite{BST, Hin05, KM20, Kus89}.

To explain the difficulty that arises due to the singularity of energy measure, we recall the definition of energy measure. Given a Dirichlet form
$(\mathcal{E},\mathcal{F})$ on $L^2(X,m)$, the \emph{energy measure}
of a function $f\in\mathcal{F}\cap L^{\infty}(X,m)$ is defined as
the unique Borel measure $\Gamma(f,f)$ on $X$ such that
\[
\int_{X} g \, d\Gamma(f,f) = \mathcal{E}(f,fg)-\frac{1}{2}\mathcal{E}(f^{2},g)\q \mbox{for all $g\in\mathcal{F}\cap C_{c}(X)$.}
\]
For the Brownian motion on $\mathbb{R}^n$, we have $\sE(f,f):= \frac{1}{2} \int_{\mathbb{R}^n} \abs{\grad f}^2(x)\,m(dx)$, where $m$ is the Lebesgue (symmetric) measure for all $f \in W^{1,2}(\mathbb{R}^n)$. By a simple calculation using product rule, the energy measure of any $f \in W^{1,2}(\mathbb{R}^n)$ is given by $\Gamma(f,f)(A) =\frac{1}{2} \int_A \abs{\nabla f}^2(x)\,m(dx)$ for any Borel set $A$. In this case $\Gamma(f,f)$ is absolutely continuous with respect to the symmetric measure but this need not be true in general as mentioned above.

Since our approach is new even in the Euclidean setting, we explain it   on $\mathbb{R}^n$.
Since we cannot rely on a point-wise estimate on gradient, we need to estimate $\int_{\bR^n} \abs{\nabla f}^2(x)\,dx$ \emph{without using point-wise estimates on $\abs{\nabla f}$}, where $f \in W^{1,2}(\mathbb{R}^n)$. 
Our approach is based on a result of Korevaar and Schoen\footnote{A   related expression for Sobolev norm was obtained earlier by Calder\'on \cite{Cal72} using the sharp maximal function \cite[Proposition 3]{HKT}. } \cite[Theorem 1.6.2]{KS}, which implies that for any $f \in W^{1,2}(\bR^n)$, the Dirichlet energy $\int_{\bR^n} \abs{\nabla f}^2(x) dx$ is comparable to 
\[
\limsup_{r\downarrow 0} \int_{\bR^n} \frac{1}{r^{n+2}}\int_{\{y: \abs{y-x}<r\}} \abs{f(x)-f(y)}^2\,dy \,dx.
\]
To estimate the above integral, we introduce a  Poincar\'e type inequality for the extension operator that is new even on $\mathbb{R}^n$ (Proposition \ref{p:eqpi}(a)).
There is a suitable generalization of  \cite[Theorem 1.6.2]{KS} in our framework as shown in Theorem \ref{t:besov} building upon earlier works \cite{Jons,GHL03, KuSt}. Since we need to obtain scale invariant boundedness of the extension operator (see \eqref{e:ebndloc1}, \eqref{e:ebndloc2} in Theorem \ref{t:extend}), we need to obtain estimates for energy measures that do not rely on point-wise estimates on the gradient (see Theorem \ref{t:besov}(c) and \eqref{e:se} in Proposition \ref{p:eqpi}(e)). 

Our estimates on the extension operator imply that the energy measure of any function in the domain of the Dirichlet form vanishes identically on the boundary of any uniform domain (Theorem \ref{t:benergy}). This property is trivial in the Gaussian space-time scaling case because the energy measure is absolutely continuous with respect to the symmetric measure  \cite[Theorem 2.13]{KM20} (see also \cite[Lemma 2.11]{ABCRST}).
It is easy to see that the symmetric measure vanishes on the boundary of any uniform domain, since it is doubling. On the other hand, if the space-time scaling is not Gaussian the energy measure is typically singular with respect to the symmetric measure \cite[Theorem 2.13(b)]{KM20}. Hino shows that the energy measure of any function in the domain of the Dirichlet form is always zero on the outer square boundary of the Sierpi\'nski carpet using a fairly intricate analysis that relies on the self-similarity of the Dirichlet form and symmetries of the carpet \cite[Section 5 and Proposition 4.15]{Hin13}. Our proof is   different and is based on more general principles such as the above-mentioned Poincar\'e inequality on the extension operator (Proposition \ref{p:eqpi}(a)). The vanishing of the energy measure on the boundary of any uniform domain is obtained as a general consequence of sub-Gaussian heat kernel estimates and thereby we obtain a new proof of \cite[Proposition 4.15]{Hin13}. Understanding energy measures has  applications to computing martingale dimension \cite{Hin13} and the attainment problem for conformal walk dimension \cite[\textsection 6]{KM23}. Energy measures are not  well-understood in general \cite[Problems 7.5 amd 7.6]{KM23}, \cite[Conjecture 2.15]{KM20}.

\begin{notation}
	Throughout this paper, we use the following notation and conventions.
	\begin{enumerate}[(i)]
		\item  The symbols $\subset$ and $\supset$ for set inclusion
		\emph{allow} the case of the equality.
		\item The cardinality (the number of elements) of a set $A$ is denoted by $\#A$.
		\item We set $\infty^{-1}:=0$. We write
		$a\vee b:=\max\{a,b\}$, $a\wedge b:=\min\{a,b\}$.
		\item Let $X$ be a non-empty set. We define $\one_{A}=\one_{A}^{X}\in\mathbb{R}^{X}$ for $A\subset X$ by
		\[\one_{A}(x):=\one_{A}^{X}(x):= \begin{cases}
			1 & \mbox{if $x \in A$,}\\
			0 & \mbox{if $x \notin A$.}
		\end{cases} \]
		
		\item We use the notation  $A \lesssim B$ for quantities $A$ and $B$ to indicate the existence of an
		implicit constant $C \ge 1$ depending on some inessential parameters such that $A \le CB$. We write $A \asymp B$, if $A \lesssim B$ and $B \lesssim A$.
		
		\item Let $X$ be a topological space. We set
		$C(X):=\{f\mid\textrm{$f:X\to\mathbb{R}$, $f$ is continuous}\}$ and
		$C_c(X):=\{f\in C(X)\mid\textrm{$X\setminus f^{-1}(0)$ has compact closure in $X$}\}$.
		\item In a metric space $(X,d)$, $B(x,r)$ is the open ball centered at $x \in X$ of radius $r>0$.
		For a subset $A \subset X$, we use the notation $B_A(x,r) := A \cap B(x,r)$ for $x \in X, r>0$.
		\item Given a ball $B:=B_U(x,r)$ (respectively $B:=B(x,r)$) and $K>0$, by $KB$ we denote the ball $B_U(x,Kr)$ (resp. $B(x,Kr)$).We denote the radius of $B$ by $r(B)$.
		\item For a set $A \subset X$, we write $\ol{A}, A^\circ, \partial A= \ol{A} \setminus A^\circ$ to denote its closure, interior and boundary respectively.
		\item For a measure $m$ and $f \in L^1(m)$, we denote by $\supp_m(f)$ the support of the measure $A \mapsto \int_A f\,dm$.
		%		\item Let $(X,\mathcal{B})$ be a measurable space and let $\mu,\nu$ be $\sigma$-finite
		%		measures on $(X,\mathcal{B})$. We write $\nu \ll \mu$ and $\nu \perp \mu$ to mean that
		%		$\nu$ is absolutely continuous and singular, respectively, with respect to $\mu$.
		%		We set $\mu|_{A}:=\mu|_{\mathcal{B}|_{A}}$ for $A\in\mathcal{B}$,
		%		where $\mathcal{B}|_{A}:=\{B\cap A\mid B\in\mathcal{B}\}$.
	\end{enumerate}
\end{notation}

\section{Framework and Main results}
In order to state the main results, we recall the definitions of doubling measure, uniform domains, Dirichlet form, energy measure and sub-Gaussian heat kernel estimates.
\subsection{Doubling metric space and doubling measures}
Throughout this paper, we consider a  metric space $(X,d)$ in which
$B(x,r):=B_{d}(x,r):=\{y\in X \mid d(x,y)<r\}$ is relatively compact
(i.e., has compact closure) for any $(x,r)\in X\times(0,\infty)$,
and a Radon measure $m$ on $X$ with full support, i.e., a Borel measure
$m$ on $X$ which is finite on any compact subset of $X$ and strictly positive on any
non-empty open subset of $X$. Such a triple $(X,d,m)$ is referred to as a \emph{metric measure space}.
We set $\diam(A):=\sup_{x,y\in A}d(x,y)$ for $A\subset X$ ($\sup\emptyset:=0$).

In much of this work, we will be in the setting for a doubling metric space equipped with a doubling measure.
\begin{definition}\label{d:mdoubling}
	A metric $d$ on $X$ is said to be a doubling metric (or equivalently, $(X,d)$ is a doubling metric space), if there exists $N \in \bN$ such that every ball $B(x,R)$ can be covered by $N$ balls of radii $R/2$ for all $x \in X, R>0$. 
\end{definition}
Next, we recall the closely related notion of doubling measures on subsets of $X$.
\begin{definition} \label{d:doubling}
	Let $(X,d)$ be a metric space and let $V \subset X$.
	We say that a Borel measure $m$ is \emph{doubling on $V$} if $m(V) \neq 0$ and there exists $D_0 \ge 1$ such that 
	$$  m(B(x,2r) \cap V) \le D_0 m(B(x,r)\cap V), \quad
	\mbox{for all $x \in V$ and all $r>0$.}$$
	We say that a non-zero Borel measure $m$ on $X$ is \emph{doubling}, if $m$ is  doubling  on $X$.
\end{definition}
The basic relationship between these notions is that if there is a (non-zero) doubing measure on a metric space $(X,d)$, then $(X,d)$ is a doubling metric space. Conversely, every complete doubling metric space admits a doubling measure \cite[Chapter 13]{Hei}. 
\subsection{Uniform  domains}
We recall the definition of a \emph{length uniform domain} and \emph{uniform domain}.  There are different definitions of uniform domains in the literature \cite{Mar,Vai}. 
We note that our definition  of length uniform domain is what is usually called a uniform domain.

Let $U \subset X$ be an open set. A \emph{curve in $U$} is a continuous function  $\gamma:[a,b] \to U$  such that $\gamma(0)=x, \gamma(b)=y$. We sometimes identify $\gamma$ with it its image $\gamma([a,b])$, so that $\gamma \subset U$. The \emph{length} of a curve $\gamma: [a,b] \to X$ is $$\ell(\gamma):= \sup \left\{ \sum_{i=0}^{n-1} d(\gamma(t_i),\gamma(t_{i+1})): a \le t_0 < t_1 \ldots < t_n \le b\right\}.$$ Define
\begin{equation}\label{e:dU}
	\delta_U(x):= \operatorname{dist}(x, X \setminus U )= \inf \{d(x,y):y \in X \setminus U\}, \quad \mbox{for all $x \in U$}.
\end{equation}
\begin{definition} \label{d:uniform}
	Let $A \ge 1$. A connected, non-empty, proper open set $U \subsetneq X$ is said to be a \textbf{length $A$-uniform domain} if   for every pair of points $x,y \in U$, there exists a curve $\gamma$ in $U$ from $x$ to $y$ such that its length $\ell(\gamma) \le A d(x,y)$ and for all $z \in \gamma$, 
	\[
	\delta_U(z) \ge A^{-1} \min \left( \ell(\gamma_{x,z}), \ell(\gamma_{z,y}) \right),
	\]
	where $\gamma_{x,z}, \gamma_{z,y}$ are subcurves of $\gamma$ from $x$ and $z$ and from $z$ to $y$ respectively. Such a curve $\gamma$ is called a \emph{length $A$-uniform curve}.
	
	A connected, non-empty, proper open set $U \subsetneq X$ is said to be a \textbf{$A$-uniform domain} if   for every pair of points $x,y \in U$, there exists a curve $\gamma$ in $U$ from $x$ to $y$ such that its diameter $\diam(\gamma) \le A d(x,y)$ and for all $z \in \gamma$, 
	\[
	\delta_U(z) \ge A^{-1} \min \left(d(x,z), d(y,z) \right).
	\]
	Such a curve $\gamma$ is called a \emph{$A$-uniform curve}.
\end{definition}
Since every  length $A$-uniform curve is    $A$-uniform curve, every length uniform domain is  a uniform domain. 
The converse fails in general because a snowflake\footnote{replacing the metric $d$ with $d^\alpha$ for some $\alpha \in (0,1)$} transform of the metric   makes it impossible for non-trivial rectifiable curves to exist, but on the other hand, the property of being a uniform domain is preserved under such transformation. 
In the Euclidean space (with the Euclidean distance) these two definitions coincide due to an argument of Marito and Sarvas \cite[Lemma 2.7]{MS}.  More generally, these two notions of uniform domains  coincide in any complete length space satisfying the metric doubling property. This can been seen by following the argument  in \cite[Proposition 3.3]{GS}, and \cite[Lemma 2.7]{MS}.  
Our reason to choose this particular definition of uniform domains is that the property of {\em being a uniform domain is   preserved under a quasisymmetric change of metric}. Since quasisymmetric changes of metric has recently played an important role in the understanding of heat kernel estimates and Harnack inequalities, we choose the weaker definition \cite{Kig12,BM,KM23}.

We discuss a few examples of uniform domains.
\begin{enumerate}[(i)]
	\item In $\bR^2$, there is a rich family of uniform domains that arise from quasiconformal mappings. By \cite[Theorem 3.4.5]{GH} every quasidisk is a uniform domain. In particular, von Koch snowflake domain and its variants are uniform domains \cite[Theorem 7.5.2]{GH}.
	\item Half-spaces, balls and cubes in the Heisenberg group equipped with the Carnot metric are uniform domains \cite[p. 132]{GS}.
	\item The complement of the outer square boundary and the domain formed by removing the bottom line of the Sierpi\'nski carpet are uniform domains \cite[Proposition 4.4]{Lie} \cite[Proposition 2.4]{CQ}.
	\item A large family of uniform domains is due to a construction of T.~Rajala \cite{Raj}. We say that a metric space $(X,d)$ is \emph{quasiconvex} if there exists $C_q \in (1,\infty)$ such that for any $x,y \in X$, there is a curve $\gamma$ connecting $x,y$ such that $\ell(\gamma) \le C_q d(x,y)$. For any quasiconvex, doubling metric space $(X,d)$, for any bounded domain $\Omega \subset X$  and for any $\epsilon >0$, there exist \emph{uniform} domains
	$\Omega_i$ and $\Omega_o$ such that $\Omega_i \subset \Omega \subset \Omega_o$ and
	$$\Omega_o \subset [\Omega]_\epsilon, \quad  \Omega_i^c \subset [\Omega^c]_\epsilon, \quad \mbox{where $[A]_\epsilon$ denotes the $\epsilon$-neighborhood of $A$.}$$
	Informally, every bounded domain can be $\epsilon$-approximated by uniform domains from outside and inside for any $\epsilon>0$.
\end{enumerate}

\subsection{Metric measure Dirichlet space and energy measure}
Let $(\mathcal{E},\mathcal{F})$ be a \emph{symmetric Dirichlet form} on $L^{2}(X,m)$;
that is, $\mathcal{F}$ is a dense linear subspace of $L^{2}(X,m)$, and
$\mathcal{E}:\mathcal{F}\times\mathcal{F}\to\mathbb{R}$
is a non-negative definite symmetric bilinear form which is \emph{closed}
($\mathcal{F}$ is a Hilbert space under the inner product $\mathcal{E}_{1}:= \mathcal{E}+ \langle \cdot,\cdot \rangle_{L^{2}(X,m)}$)
and \emph{Markovian} ($f^{+}\wedge 1\in\mathcal{F}$ and $\mathcal{E}(f^{+}\wedge 1,f^{+}\wedge 1)\leq \mathcal{E}(f,f)$ for any $f\in\mathcal{F}$).
Recall that $(\mathcal{E},\mathcal{F})$ is called \emph{regular} if
$\mathcal{F}\cap \contfunc_{\mathrm{c}}(X)$ is dense both in $(\mathcal{F},\mathcal{E}_{1})$
and in $(\contfunc_{\mathrm{c}}(X),\|\cdot\|_{\mathrm{sup}})$, and that
$(\mathcal{E},\mathcal{F})$ is called \emph{strongly local} if $\mathcal{E}(f,g)=0$
for any $f,g\in\mathcal{F}$ with $\supp_{m}[f]$, $\supp_{m}[g]$ compact and
$\supp_{m}[f-a\one_{X}]\cap\supp_{m}[g]=\emptyset$ for some $a\in\mathbb{R}$. Here
$\contfunc_{\mathrm{c}}(X)$ denotes the space of $\mathbb{R}$-valued continuous functions on $X$ with compact support, and
for a Borel measurable function $f:X\to[-\infty,\infty]$ or an
$m$-equivalence class $f$ of such functions, $\supp_{m}[f]$ denotes the support of the measure $|f|\,dm$,
i.e., the smallest closed subset $F$ of $X$ with $\int_{X\setminus F}|f|\,dm=0$,
which exists since $X$ has a countable open base for its topology; note that
$\supp_{m}[f]$ coincides with the closure of $X\setminus f^{-1}(0)$ in $X$ if $f$ is continuous.
The pair $(X,d,m,\mathcal{E},\mathcal{F})$ of a metric measure space $(X,d,m)$ and a strongly local,
regular symmetric Dirichlet form $(\mathcal{E},\mathcal{F})$ on $L^{2}(X,m)$ is termed
a \emph{metric measure Dirichlet space}, or an \emph{MMD space} in abbreviation. By Fukushima's theorem about regular Dirichlet forms, the MMD space corresponds to a symmetric Markov processes on $X$ with continuous sample paths \cite[Theorem 7.2.1 and 7.2.2]{FOT}.
We refer to \cite{FOT,CF} for details of the theory of symmetric Dirichlet forms.

We recall the definition of energy measure.
Note that $fg\in\mathcal{F}$
for any $f,g\in\mathcal{F}\cap L^{\infty}(X,m)$ by \cite[Theorem 1.4.2-(ii)]{FOT}
and that $\{(-n)\vee(f\wedge n)\}_{n=1}^{\infty}\subset\mathcal{F}$ and
$\lim_{n\to\infty}(-n)\vee(f\wedge n)=f$ in norm in $(\mathcal{F},\mathcal{E}_{1})$
by \cite[Theorem 1.4.2-(iii)]{FOT}.

\begin{definition}\label{d:EnergyMeas}
	Let $(X,d,m,\mathcal{E},\mathcal{F})$ be an MMD space.
	The \emph{energy measure} $\Gamma(f,f)$ of $f\in\mathcal{F}$
	associated with $(X,d,m,\mathcal{E},\mathcal{F})$ is defined,
	first for $f\in\mathcal{F}\cap L^{\infty}(X,m)$ as the unique ($[0,\infty]$-valued)
	Borel measure on $X$ such that
	\begin{equation}\label{e:EnergyMeas}
		\int_{X} g \, d\Gamma(f,f)= \mathcal{E}(f,fg)-\frac{1}{2}\mathcal{E}(f^{2},g) \qquad \textrm{for all $g \in \mathcal{F}\cap \contfunc_{\mathrm{c}}(X)$,}
	\end{equation}
	and then by
	$\Gamma(f,f)(A):=\lim_{n\to\infty}\Gamma\bigl((-n)\vee(f\wedge n),(-n)\vee(f\wedge n)\bigr)(A)$
	for each Borel subset $A$ of $X$ for general $f\in\mathcal{F}$. 
\end{definition}
Associated with a Dirichlet form is a \textbf{strongly continuous contraction semigroup} $(P_t)_{t > 0}$; that is, a family of symmetric bounded linear operators $P_t:L^2(X,m) \to L^2(X,m)$ such that
\[
P_{t+s}f=P_t(P_sf), \q \norm{P_tf}_2 \le \norm{f}_2, \q \lim_{t \downarrow 0} \norm{P_tf-f}_2 =0, 
\]
for all $t,s>0, f \in L^2(X,m)$. In this case, we can express $(\sE,\sF)$ in terms of the semigroup as
\begin{equation} \label{e:semigroup}
	\sF=\{f \in L^2(X,m): \lim_{t \downarrow 0}  \frac{1}{t}\langle f - P_t f, f \rangle < \infty \}, \q \sE(f,f)= \lim_{t \downarrow 0}  \frac{1}{t}\langle f - P_t f, f \rangle, 
\end{equation}
for all $f \in \sF$, where $\langle \cdot, \cdot \rangle$ denotes the inner product in $L^2(X,m)$ \cite[Theorem 1.3.1 and Lemmas 1.3.3 and 1.3.4]{FOT}. It is known that  $P_t$ restricted to  $L^2(X,m) \cap L^\infty(X,m)$ extends to a linear contraction on $L^\infty(X,m)$ \cite[pp. 5 and 6]{CF}. If $P_t 1 = 1$ ($m$ a.e.) for all $t >0$, we say that the corresponding Dirichlet form $(\sE,\sF)$ is \emph{conservative}.
\begin{definition}[Local Dirichlet space and its energy measure] \label{d:local}
	For an open set $U \subset X$ of an MMD space $(X,d,m,\sE,\sF)$, we define the local Dirichlet space $\mathcal{F}_{\loc}(U)$ as
	\begin{equation}\label{e:Floc}
		\mathcal{F}_{\loc}(U) := \Biggl\{ f \Biggm|
		\begin{minipage}{285pt}
			$f$ is an $m$-equivalence class of $\mathbb{R}$-valued Borel measurable functions
			on $U$ such that $f \one_{V} = f^{\#} \one_{V}$ $m$-a.e.\ for some $f^{\#}\in\mathcal{F}$
			for each relatively compact open subset $V$ of $U$
		\end{minipage}
		\Biggr\} 
	\end{equation}
	and the energy measure $\Gamma_U(f,f)$ of $f\in\mathcal{F}_{\loc}(U)$ associated with
	$(X,d,m,\mathcal{E},\mathcal{F})$ is defined as the unique Borel measure on $U$
	such that $\Gamma_U(f,f)(A)=\Gamma(f^{\#},f^{\#})(A)$ for any relatively compact
	Borel subset $A$ of $U$ and any $V,f^{\#}$ as in \eqref{e:Floc} with $A\subset V$;
	note that $\Gamma(f^{\#},f^{\#})(A)$ is independent of a particular choice of such $V,f^{\#}$. We define
	\begin{equation} \label{e:defFU}
		\mathcal{F}(U) := \{f \in 	\mathcal{F}_{\loc}(U): \int_U f^2\,dm + \int_U \Gamma_U(f,f) < \infty \},
	\end{equation}
	and the bilinear form $(\sE_U,\sF(U))$  as 
	\begin{equation} \label{e:neumann}
		\sE_U(f,f)= \int_U \Gamma_U(f,f), \q \mbox{for all $f \in \sF(U)$.}
	\end{equation}
\end{definition}
The form $(\sE_U,\sF(U))$ need not be a regular Dirichlet form on $L^2(\overline{U},m)$ in general. A   sufficient condition for  $(\sE_U,\sF(U))$ to be a regular Dirichlet form on $L^2(\overline{U},m)$ is given in Lemma \ref{l:regular}.  If $(\sE_U,\sF(U))$ is a regular Dirichlet form on $L^2(\overline{U},m)$, then this is the Dirichlet form corresponding to the \emph{reflected diffusion} on $\overline{U}$.

\subsection{Sub-Gaussian heat kernel estimates}
Let $\Psi:(0,\infty) \to (0,\infty)$ be a continuous increasing bijection of $(0,\infty)$ onto itself, such that for all $0 < r \le R$,
\begin{equation}  \label{e:reg}
C^{-1} \left( \frac R r \right)^{\beta_1} \le \frac{\Psi(R)}{\Psi(r)} \le C \left( \frac R r \right)^{\beta_2}, 
\end{equation}
for some constants $1 < \beta_1 < \beta_2$ and $C>1$. 
If necessary, we extend $\Psi$ by setting $\Psi(\infty) = \infty$.
Such a function $\Psi$ is said to be a \textbf{scale function}.
For $\Psi$ satisfying \eqref{e:reg}, we define
\begin{equation} \label{e:defPhi}
\Phi(s)= \sup_{r>0} \left(\frac{s}{r}-\frac{1}{\Psi(r)}\right).
\end{equation}

\begin{definition}[\hypertarget{hke}{$\on{HKE(\Psi)}$}]\label{d:HKE}
	Let $(X,d,m,\mathcal{E},\mathcal{F})$ be an MMD space, and let $\set{P_t}_{t>0}$
	denote its associated Markov semigroup. A family $\set{p_t}_{t>0}$ of non-negative
	Borel measurable functions on $X \times X$ is called the
	\emph{heat kernel} of $(X,d,m,\mathcal{E},\mathcal{F})$, if $p_t$ is the integral kernel
	of the operator $P_t$ for any $t>0$, that is, for any $t > 0$ and for any $f \in L^2(X,m)$,
	\[
	P_t f(x) = \int_X p_t (x, y) f (y)\, dm (y) \qquad \mbox{for $m$-almost all $x \in X$.}
	\]
	We say that $(X,d,m,\mathcal{E},\mathcal{F})$ satisfies the \textbf{heat kernel estimates}
	\hyperlink{hke}{$\on{HKE(\Psi)}$}, if there exist $C_{1},c_{1},c_{2},c_{3},\delta\in(0,\infty)$
	and a heat kernel $\set{p_t}_{t>0}$ such that for any $t>0$,
	\begin{align}\label{e:uhke}
		p_t(x,y) &\le \frac{C_{1}}{m\bigl(B(x,\Psi^{-1}(t))\bigr)} \exp \left( -c_{1} t \Phi\left( c_{2}\frac{d(x,y)} {t} \right) \right)
		\qquad \mbox{for $m$-a.e.~$x,y \in X$,}\\
		p_t(x,y) &\ge \frac{c_{3}}{m\bigl(B(x,\Psi^{-1}(t))\bigr)}
		\qquad \mbox{for $m$-a.e.~$x,y\in X$ with $d(x,y) \le \delta\Psi^{-1}(t)$,}
		\label{e:nlhke}
	\end{align}
	where $\Phi$ is as defined in \eqref{e:defPhi}.
	%	\begin{equation} \label{e:defPhiRt}
		%		\Phi(R,t) := \Phi_{\Psi}(R,t) := \sup_{r>0} \biggl(\frac{R}{r}-\frac{t}{\Psi(r)}\biggr),
		%		\qquad (R,t)\in[0,\infty)\times(0,\infty).
		%	\end{equation}
\end{definition}

\subsection{Main results}
We are now ready to state the main results. The setting is   an MMD space that satisfies sub-Gaussian heat kernel bounds.
Our first result is the existence of a bounded extension operator. This operator is bounded globally (see \eqref{e:bndglob1}, \eqref{e:bndglob2}) and also satisfies good \emph{scale-invariant} bounds (see \eqref{e:ebndloc1}, \eqref{e:ebndloc2})
\begin{theorem}[Extension property] \label{t:extend} 
	Let $(X,d,m,\sE,\sF)$ be an MMD space that satisfies the heat kernel estimate \hyperlink{hke}{$\on{HKE(\Psi)}$} for some scale function $\Psi$ and let $m$ be a doubling measure.  Let $U$ be a uniform domain $U$ and let $(\sE_U,\sF(U))$ denote the bi-linear form in Definition \ref{d:local}. There is a linear operator $E: \sF(U) \to \sF$ such that the restriction of $E(f)$ to $U$ is $f$ for all $f \in \sF$ (that is, $E$ is an extension operator).
	Furthermore, there exist  $C,K \in (1,\infty), c \in (0,1)$ such that for all $x \in  \overline{U}$, and $f \in \sF(U)$,  we have
	\begin{align}
		\label{e:ebndloc1}				\Gamma(E(f),E(f))(B(x,r)) &\le C \Gamma_U(f,f)(B_U(x,K r)),  \quad\mbox{for all  $0 <r< c \diam(U)$;}\\  
		\label{e:ebndloc2}	 \int_{B(x,r)} \abs{E(f)}^2\,dm &\le C \int_{B_U(x,Kr)} f^2\,dm \q \mbox{for all  $r>0$;}\\
		\label{e:bndglob1}		\sE(E(f),E(f))  &\le C \left(\sE_U(f,f) + \frac{1}{\Psi(\diam(U))} \int_U f^2 \,dm \right); \\
		\int_X \abs{Ef}^2 \,dm &\le C \int_U f^2 \, dm. \label{e:bndglob2}
	\end{align}
	Here $\Gamma, \Gamma_U$ denote the energy measures of $(\sE,\sF)$ and $(\sE_U,\sF(U))$ respectively. 
\end{theorem}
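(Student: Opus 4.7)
The plan is to construct $E$ by a Whitney-reflection procedure on the complement $V := X\setminus\overline{U}$, and then to estimate the energy measure $\Gamma(E(f),E(f))$ using the Korevaar--Schoen/Besov-type representation of the Dirichlet form (Theorem \ref{t:besov}) in place of a pointwise gradient bound, which is unavailable here. First I would fix a Whitney cover $\{B_i=B(x_i,r_i)\}_{i\in I}$ of $V$ with $r_i\asymp\dist(x_i,\overline{U})$ and bounded overlap of the dilates $2B_i$, a Lipschitz partition of unity $\{\varphi_i\}$ subordinate to $\{2B_i\}$ with Lipschitz constants $\lesssim 1/r_i$, and for each $i$ a ``reflected'' ball $B_i^*\subset U$ of comparable radius and at comparable distance to $\partial U$; existence of the $B_i^*$'s is where the uniform-domain axiom enters, and it also guarantees that reflected balls attached to spatially close Whitney balls can be joined by a short chain of balls contained in $U$. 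I would then define
\[
E(f)(x) := \begin{cases} f(x), & x\in U, \\ \sum_{i}\varphi_i(x)\,f_{B_i^*}, & x\in V, \end{cases}
\]
where $f_{B_i^*}:=m(B_i^*)^{-1}\int_{B_i^*} f\,dm$.

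The $L^2$ estimates \eqref{e:ebndloc2} and \eqref{e:bndglob2} are the easier part. Jensen's inequality together with the bounded overlap of the $2B_i$'s, and the fact that, for $x\in\overline U$, the reflected balls attached to Whitney balls meeting $B(x,r)$ all lie inside a single $B_U(x,Kr)$ with uniformly bounded multiplicity, give $\int_{B(x,r)}|E(f)|^2\,dm\lesssim\int_{B_U(x,Kr)}f^2\,dm$, from which \eqref{e:ebndloc2} and hence \eqref{e:bndglob2} follow.

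For the scale-invariant energy bound \eqref{e:ebndloc1}, I would apply Theorem \ref{t:besov}, which gives, for $g\in\sF$,
\[
\Gamma(g,g)(A)\asymp\limsup_{s\downarrow 0}\int_A\frac{1}{\Psi(s)\,m(B(u,s))}\int_{B(u,s)}|g(u)-g(v)|^2\,dm(v)\,dm(u).
\]
Fixing $B=B(x,r)$ with $x\in\overline U$ and $r<c\diam(U)$, and a small scale $s\ll r$, I would split the integral over $u\in B$ according as $u\in U$ or $u\in V$, and likewise the inner integral over $v\in B(u,s)$. The pair $u,v\in U$ contributes the Besov integral of $f$ itself, bounded in the limit by $\Gamma_U(f,f)(B_U(x,Kr))$. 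For $u,v\in V$ I would use the decomposition
\[
E(f)(u)-E(f)(v)=\sum_i(\varphi_i(u)-\varphi_i(v))f_{B_i^*}+\sum_i\varphi_i(v)(f_{B_i^*}-f_{B_{i_0}^*}),
\]
with $i_0$ any index with $v\in 2B_{i_0}$, and estimate the two sums via the Lipschitz bound on $\varphi_i$ and a chained application of the Poincar\'e-type inequality of Proposition \ref{p:eqpi}(a) together with the energy estimate \eqref{e:se}. The mixed case $u\in U$, $v\in V$ is handled by interpolating at a boundary-adjacent intermediate Whitney ball.

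The hard part will be carrying out this chaining in a scale-invariant way: the Whitney chain of reflected balls joining $B_{i_0}^*$ to $B_i^*$ must stay inside $B_U(x,Kr)$, and the telescoping sum of local Poincar\'e contributions must reassemble into the single quantity $\Gamma_U(f,f)(B_U(x,Kr))$ with constants independent of $x$ and $r$. This is exactly where the uniform-domain axiom supplies short, geometrically well-behaved chains, while the scale-invariant Poincar\'e inequality of Proposition \ref{p:eqpi} prevents blow-up of constants under summation. Once \eqref{e:ebndloc1} is in hand, the global estimate \eqref{e:bndglob1} follows by covering $X$ with boundedly overlapping balls of radius $\asymp\diam(U)$, using doubling and \eqref{e:reg}; the extra $\Psi(\diam(U))^{-1}\!\int_U f^2\,dm$ term in \eqref{e:bndglob1} absorbs the contributions of balls centered far from $\overline U$, where \eqref{e:ebndloc1} does not directly apply.
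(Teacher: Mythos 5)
There is a genuine and fundamental gap: your construction uses a \emph{Lipschitz} partition of unity $\{\varphi_i\}$ with Lipschitz constants $\lesssim 1/r_i$, and your energy estimates then go ``via the Lipschitz bound on $\varphi_i$''. In the sub-Gaussian setting this breaks down. For a Lipschitz function $\varphi_i$ with Lipschitz constant $\sim 1/r_i$, the Korevaar--Schoen quantity of Theorem \ref{t:besov} at scale $s$ behaves like $\frac{s^2}{\Psi(s)\,r_i^2}\,m(B_i)$, and since $\Psi(s)\asymp s^{\beta}$ with $\beta>2$ is allowed (indeed is the whole point of the sub-Gaussian setting), this diverges as $s\downarrow 0$. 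Concretely, there is no reason a Lipschitz function should even lie in $\sF$, let alone satisfy the scale-correct bound $\Gamma(\varphi_i,\varphi_i)\lesssim m(B_i)/\Psi(r_i)$ that the argument needs; the paper's concluding remarks (\textsection 6.3) explicitly discuss the expected failure of Lipschitz functions to have controlled energy on fractals. The paper avoids this by replacing the Lipschitz partition of unity with one whose energy is controlled directly via the capacity upper bound $\operatorname{cap}(\Psi)_\le$ (Lemma \ref{l:partition}), giving $\sE(\psi_B,\psi_B)\lesssim m(B)/\Psi(r(B))$. That single substitution is what makes the whole construction viable beyond the Gaussian case, and it is exactly the point your proposal misses.

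Two secondary remarks. First, your plan invokes both Proposition \ref{p:eqpi}(a) and the estimate \eqref{e:se}; but \eqref{e:se} is essentially the conclusion \eqref{e:ebndloc1} itself, so as stated the chaining step is circular — you must derive the local energy-measure bound on $V$ directly from the partition-of-unity energy bounds and a graph lemma (as the paper does in Proposition \ref{p:eqpi}(b)), not by appealing to the result you are proving. Second, you will also need to show $\Gamma(E(f),E(f))(\partial U)=0$ before \eqref{e:ebndloc1} can be assembled from the $U$- and $V$-pieces, since $B(x,r)$ generally meets $\partial U$; in the paper this is a separate argument (Proposition \ref{p:eqpi}(d)) using Theorem \ref{t:besov}(c), and it does not come for free from the local estimate on $V$.
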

In \eqref{e:bndglob1} above, we interpret $\frac{1}{\Psi(\infty)}=0$ in case $\diam(U)= \infty$.

Our second main result is that the reflected diffusion on any uniform domain satisfies a sub-Gaussian heat kernel estimate and that the Dirichlet form approach defines  a symmetric Markov process on $\overline{U}$. 
\begin{theorem}[Heat kernel estimate  for reflected diffusion] \label{t:hkeunif}
	Let $(X,d,m,\sE,\sF)$ be an MMD space that satisfies the heat kernel estimate \hyperlink{hke}{$\on{HKE(\Psi)}$} for some scale function $\Psi$ and let $m$ be a doubling measure. Then for any uniform domain $U$, the bi-linear form $(\sE_U,\sF(U))$ is a strongly-local regular Dirichlet form on $L^2(\overline{U},m)$. Moreover, the corresponding MMD space $(\overline{U},d,m,\sE_U,\sF(U))$  satisfies the heat kernel estimate  \hyperlink{hke}{$\on{HKE(\Psi)}$}.
\end{theorem}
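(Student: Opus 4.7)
The plan is to reduce Theorem \ref{t:hkeunif} to the well-known characterization of \hyperlink{hke}{$\on{HKE(\Psi)}$} in terms of volume doubling, a scale-invariant Poincar\'e inequality, and the cutoff Sobolev inequality (the content of Theorem \ref{t:hke} referenced in the introduction). Once $(\sE_U,\sF(U))$ is shown to be a strongly-local regular Dirichlet form on $L^2(\overline{U},m)$, the task is to verify these three properties for the MMD space $(\overline{U},d,m,\sE_U,\sF(U))$. The extension operator $E$ from Theorem \ref{t:extend} plays a decisive role in establishing the cutoff Sobolev inequality, and more modestly in the Poincar\'e inequality.

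First I would verify the regularity and strong locality of $(\sE_U,\sF(U))$ on $L^2(\overline{U},m)$ by appealing to Lemma \ref{l:regular} (the sufficient condition mentioned after \eqref{e:neumann}); the hypotheses of that lemma should be checked using the uniform-domain property together with the extension map $E$, which supplies a dense family of restrictions of functions in $\sF \cap C_c(X)$ to $\overline{U}$. Strong locality is inherited from $(\sE,\sF)$ because $\Gamma_U$ is the restriction of $\Gamma$ to $U$, and the boundary has zero energy contribution, which is precisely Theorem \ref{t:benergy}. Next I would establish that $m$ is doubling on $\overline{U}$: using the fact that a uniform domain satisfies an interior corkscrew condition, any ball $B_U(x,r)$ centered in $\overline{U}$ contains a ball in $X$ of comparable radius lying in $U$, and combining this with the doubling property of $m$ on $X$ yields the doubling of $m$ restricted to $\overline{U}$.

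For the Poincar\'e inequality on $(\overline{U},d,m,\sE_U,\sF(U))$, I would follow the Whitney-chaining argument of Gyrya and Saloff-Coste \cite{GS}. Given a ball $B=B_U(x,r)$, one chains a Whitney decomposition of $U$ by uniform curves to transport the Poincar\'e inequality on balls contained well inside $U$ (where the restriction of $\sE$ to those balls is the full ambient Dirichlet form, so the PI on $X$ applies) to a Poincar\'e inequality on $B$. The uniform domain condition, plus volume doubling on $\overline{U}$, makes the standard telescoping argument go through with constants independent of $x$ and $r$; no new input beyond \cite{GS} is needed here since on any ball contained in $U$ the two forms agree.

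The main obstacle and novelty is the cutoff Sobolev inequality (CS) for $(\sE_U,\sF(U))$, and this is exactly where the scale-invariant extension bounds \eqref{e:ebndloc1}--\eqref{e:ebndloc2} enter (Proposition \ref{p:cs}). The strategy is: given concentric balls $B_U(x,r) \subset B_U(x,R)$ in $\overline{U}$, construct a CS cutoff by starting from a test function $f \in \sF(U)$, extending it to $Ef \in \sF$, invoking CS on $X$ for the ambient form on the enlarged balls $B(x,Kr) \subset B(x,KR)$ (with $K$ from Theorem \ref{t:extend}), and then restricting back to $U$. The scale-invariant estimates \eqref{e:ebndloc1} and \eqref{e:ebndloc2} control $\Gamma(Ef,Ef)$ and $\|Ef\|_{L^2}^2$ on every annulus $B(x,\rho)\setminus B(x,\rho')$ by the corresponding quantities for $f$ on comparable sets in $U$, so the $X$-CS inequality translates into a $U$-CS inequality with only a multiplicative loss. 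The careful point is that $K>1$, so one must do a geometric-series summation over slightly inflated annuli to absorb the overlap, which is permitted because $\Psi$ satisfies the doubling-type bound \eqref{e:reg}. With volume doubling, PI, and CS verified on $(\overline{U},d,m,\sE_U,\sF(U))$, the equivalence Theorem \ref{t:hke} delivers \hyperlink{hke}{$\on{HKE(\Psi)}$} and hence both conclusions of Theorem \ref{t:hkeunif}.
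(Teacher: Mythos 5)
Your overall strategy matches the paper's: reduce to Theorem \ref{t:hke}, obtain regularity via Lemma \ref{l:regular}, doubling on $\overline{U}$ via the corkscrew condition (Lemma \ref{l:doubling}), the Poincar\'e inequality on $U$ by Whitney chaining in the style of \cite{GS} (Theorem \ref{t:pi}), and the cutoff Sobolev inequality by transferring from $X$ to $U$ through the extension operator (Proposition \ref{p:cs}).

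There is, however, a genuine gap in your treatment of the cutoff Sobolev inequality. You write that the $X$-CS inequality ``translates into a $U$-CS inequality with only a multiplicative loss,'' and then gesture at a ``geometric-series summation over slightly inflated annuli.'' This does not resolve the actual obstruction. In \hyperlink{cs}{$\operatorname{CS}(\Psi)$} the coefficient in front of $\int \phi^2\,d\Gamma(f,f)$ is precisely $\tfrac18$ (it is tied to the energy-inequality machinery that makes CS useful), whereas transferring through the extension operator multiplies this term by the constant $C$ of \eqref{e:ebndloc1}, which is certainly larger than $1$. A statement with a coefficient $C/8$ is not \hyperlink{cs}{$\operatorname{CS}(\Psi)$}. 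Your geometric-series remark appears to address only the spatial inflation $K>1$ in the domains of integration (which is the harmless ``$A_1$'' slack), not this coefficient. The paper circumvents this by introducing the weaker inequality \hyperlink{csa}{$\operatorname{CSS}(\Psi)$} (Definition \ref{d:css}), in which an arbitrary constant $C_1$ is permitted in front of $\int d\Gamma(f,f)$ and integration is over a ball rather than an annulus, then proving (Lemma \ref{l:improve}, following \cite[Lemma 5.1]{AB} and \cite[Proposition 5.11]{BM}) that \hyperlink{csa}{$\operatorname{CSS}(\Psi)$} self-improves to \hyperlink{cs}{$\operatorname{CS}(\Psi)$}. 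Proposition \ref{p:cs} then establishes only \hyperlink{csa}{$\operatorname{CSS}(\Psi)$} for $(\overline U,d,m,\sE_U,\sF(U))$ by the extension argument, and Lemma \ref{l:improve} does the rest. Without invoking (or reproving) such a self-improvement step, your transfer argument does not yield \hyperlink{cs}{$\operatorname{CS}(\Psi)$} as required by Theorem \ref{t:hke}.

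A smaller imprecision: you say you ``construct a CS cutoff by starting from a test function $f\in\sF(U)$, extending it to $Ef$.'' In the paper's argument the cutoff $\phi$ is simply the ambient cutoff from $\operatorname{CS}(\Psi)$ on $X$ (restricted to $\overline U$); it does not depend on $f$. What gets extended is the test function $f$, and the scale-invariant bounds \eqref{e:ebndloc1}--\eqref{e:ebndloc2} are applied to $E f$, not to a cutoff. As stated, your phrasing conflates the two roles.
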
 
Finally, we show that the energy measure of any function vanishes on the boundary of any uniform domain.
\begin{theorem}[Energy measure of the boundary] \label{t:benergy}
	Let $(X,d,m,\sE,\sF)$ be an MMD space that satisfies the heat kernel estimate \hyperlink{hke}{$\on{HKE(\Psi)}$} for some scale function $\Psi$ and let $m$ be a doubling measure. Then for any uniform domain $U$ and any $f \in \sF$, we have 
	\[
	\Gamma(f,f)(\partial U) =0,
	\]
	where $\Gamma(f,f)$ denotes the corresponding energy measure.
\end{theorem}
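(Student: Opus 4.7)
The plan is to decompose $f$ via the extension operator of Theorem~\ref{t:extend} and handle each piece separately, leveraging the key geometric fact that $m(\partial U) = 0$. I would first verify the latter: the uniform curve condition yields the interior corkscrew property---there exists $c = c(A) \in (0,1)$ such that for every $x \in \partial U$ and every sufficiently small $r > 0$ one finds $y \in U$ with $d(x, y) \le r$ and $\delta_U(y) \ge c r$, obtained by selecting an intermediate point on a uniform curve from a fixed reference $x_0 \in U$ (with $\delta_U(x_0)$ large) to a point of $U$ close to $x$. Combined with the doubling property of $m$, this gives $m(B(x, r) \cap U) \gtrsim m(B(x, r))$ uniformly in $x \in \partial U$, so Lebesgue differentiation in the doubling space $(X, d, m)$ applied to $\one_U$ forces $m(\partial U) = 0$.

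For $f \in \sF$, set $\widetilde{f} := E(f|_U) \in \sF$ and $h := f - \widetilde{f} \in \sF$. By the extension property, $h = 0$ $m$-a.e.\ on $U$; by bilinearity and Cauchy--Schwarz, it suffices to show $\Gamma(\widetilde{f}, \widetilde{f})(\partial U) = 0$ and $\Gamma(h, h)(\partial U) = 0$. For the first, fix small $\epsilon > 0$, take a maximal $\epsilon$-separated set $\{x_i\} \subset \partial U$ (so $\{B(x_i, \epsilon)\}$ covers $\partial U$ while $\{B(x_i, \epsilon/2)\}$ is disjoint), and apply the scale-invariant bound \eqref{e:ebndloc1}:
\[
\Gamma(\widetilde f, \widetilde f)(\partial U) \le \sum_i \Gamma(\widetilde f, \widetilde f)(B(x_i, \epsilon)) \le C \sum_i \Gamma_U(f|_U, f|_U)(B_U(x_i, K \epsilon)).
\]
The doubling property makes the multiplicity of the family $\{B_U(x_i, K\epsilon)\}$ uniformly bounded, and each of these sets is contained in $\{z \in U : \delta_U(z) < K\epsilon\}$. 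Since $\Gamma_U(f|_U, f|_U)$ is a finite Borel measure on $U$ and these nested neighborhoods decrease to $\emptyset$ as $\epsilon \downarrow 0$, the right-hand side tends to zero.

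For the remainder $h$, I would invoke the Besov-type upper bound of Theorem~\ref{t:besov}(c), which under \hyperlink{hke}{$\on{HKE(\Psi)}$} takes the form
\[
\Gamma(h, h)(A) \lesssim \limsup_{r \downarrow 0} \int_A \frac{1}{\Psi(r)\, m(B(x, r))} \int_{B(x, r)} (h(x) - h(y))^2 \, dm(y)\, dm(x)
\]
for Borel $A \subset X$. Applied to $A = \partial U$, the outer integration is against $m$ on an $m$-null set, so the right-hand side vanishes and $\Gamma(h, h)(\partial U) = 0$. Combining this with the previous paragraph yields the desired identity $\Gamma(f, f)(\partial U) = 0$.

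The most delicate ingredient is the precise setwise form of the Besov-type upper bound of Theorem~\ref{t:besov}(c): if the outer integration is instead taken over a small $r$-neighborhood of $\partial U$, one combines continuity of $m$ on the nested neighborhoods of the closed $m$-null set $\partial U$ with a uniform-in-$r$ estimate for the inner integrand (via a covering argument and \eqref{e:ebndloc1}) to close the argument. The structural reason for the two-step decomposition is that $V := (\ol{U})^c$ need not be a uniform domain, so one cannot handle $h$ by a symmetric exterior extension and a covering argument analogous to the one used for $\widetilde f$; the Besov bound together with $m(\partial U) = 0$ is what replaces this missing symmetry.
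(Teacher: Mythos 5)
Your decomposition $f = \widetilde f + h$ with $\widetilde f = E(f|_U)$ and $h = f - \widetilde f$ is the same one the paper uses, and your handling of the $\widetilde f$ piece is sound: summing the scale-invariant estimate \eqref{e:ebndloc1} over a maximal $\epsilon$-net of $\partial U$, using bounded overlap, and observing that $\Gamma_U(f|_U,f|_U)$ is a finite measure whose mass on $\{z \in U : \delta_U(z) < K\epsilon\}$ tends to zero is a clean way to get $\Gamma(\widetilde f,\widetilde f)(\partial U)=0$ directly from the statement of Theorem \ref{t:extend}. (Internally, \eqref{e:ebndloc1} is proved in Proposition \ref{p:eqpi}(e) \emph{using} the vanishing statement \eqref{e:emb}, so this step does not constitute an independent proof of that ingredient, but taking Theorem \ref{t:extend} as a black box it is a legitimate deduction.) Your proof of $m(\partial U)=0$ is also correct and matches Lemma \ref{l:doubling}.

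The genuine gap is in the $h$ piece. You write ``Applied to $A = \partial U$, the outer integration is against $m$ on an $m$-null set,'' but that is not what Theorem \ref{t:besov}(c) gives you: the bound is
\[
\Gamma(h,h)(K) \le C \lim_{\delta\downarrow 0}\limsup_{r\downarrow 0}\frac{1}{\Psi(r)}\int_{K_\delta}\fint_{B(x,r)}(h(x)-h(y))^2\,dm(y)\,dm(x),
\]
and the outer integral runs over the \emph{open} $\delta$-neighborhood $K_\delta$, which has positive $m$-measure. For $x \in K_\delta \cap U$ you have $h(x)=0$ $m$-a.e.\ but the inner integral still picks up $h(y)^2$ for $y \in B(x,r)\cap V$, and for $x \in K_\delta \cap V$ you get a full difference quotient that is not controlled by $f|_U$ at all, since on $V$ the function $h = f - E(f|_U)$ is the difference of two essentially unrelated functions. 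Your proposed fix---``a uniform-in-$r$ estimate for the inner integrand (via a covering argument and \eqref{e:ebndloc1})''---does not exist: \eqref{e:ebndloc1} bounds the energy measure of $E(f|_U)$, not of $h$, and a pointwise-in-$x$, uniform-in-$r$ dominant for $\frac{1}{\Psi(r)}\fint_{B(x,r)}(h(x)-h(y))^2\,dm(y)$ would amount to a pointwise gradient bound, which is precisely what the singularity of energy measures rules out in the sub-Gaussian setting (this is the central difficulty the paper flags in its introduction). You cannot reduce to $m(\partial U)=0$ without such a dominant.

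The paper closes this exact gap differently, and the difference is essential. It first uses regularity of $(\sE,\sF)$ together with the inequality \eqref{e:approx} to reduce to $f \in C_c(X)\cap\sF$. For such $f$, Lemma \ref{l:cont} provides a \emph{continuous} version $\wt{E_Q}f$ of the extension agreeing with $f$ everywhere on $\overline{U}$, so that $h = f - \wt{E_Q}f$ is continuous and vanishes \emph{identically} on $\overline{U}$ (not merely $m$-a.e.\ on $U$). Then \eqref{e:elevel} --- absolute continuity of the image measure $\wt{h}_*\Gamma(h,h)$ on $\bR$ for quasi-continuous $\wt{h}$, from \cite[Theorem 4.3.8]{CF} --- gives $\Gamma(h,h)(h^{-1}(\{0\}))=0$, hence $\Gamma(h,h)(\overline{U})=0$, and \eqref{e:approx} transfers this to $\Gamma(f,f)(\partial U)$. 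The key structural move you are missing is therefore the continuity reduction: without it, knowing $h=0$ $m$-a.e.\ on $U$ does not let you conclude $\wt h = 0$ quasi-everywhere (or even $\Gamma(h,h)$-a.e.) on $\partial U$, and the Besov estimate alone cannot compensate.
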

We briefly mention a  probabilistic consequence of the above property of the energy measure. Let $\mu$ be a smooth measure whose quasi-support is   $\partial U$. In \cite{KM23+},  Theorem \ref{t:benergy} is used to show  that the trace process corresponding to $\mu$ on $\partial U$ is a pure jump process  by \cite[Theorems 5.2.2, 5.2.15, and Corollary 5.6.1]{CF}. Hence Theorem \ref{t:benergy} is a starting point to   study    jump process on $\partial U$ that is a trace of reflected diffusion on $\overline{U}$. We obtain heat kernel estimates for the trace (jump) process on the boundary of reflected diffusion on uniform domains in  \cite{KM23+}. 

\subsection{Outline of the work}
The rest of the paper is organized as follows.
In \textsection \ref{s:geometry}, we recall some useful facts about geometry of Whitney covers, uniform domains and Whitney cover of a uniform domain. The main result in \textsection \ref{s:geometry} is Proposition \ref{p:reflect} which provides a `reflection map' that maps   Whitney cover of $V= (\overline{ U})^c$ to  Whitney cover of $U$ at all scales less than diameter of $U$. This reflection of Whitney balls is used to define the extension map from $\sF(U)$ to $\sF$ in \textsection \ref{ss:extend} for any  uniform domain $U$. In \textsection \ref{s:sghke}, we recall the simple result that extension property implies the regularity  of the Dirichlet form corresponding to reflected diffusion on the closure of the domain (Lemma \ref{l:regular}). We recall the characterization of heat kernel estimates using functional inequalities in Theorem \ref{t:hke}.  In this setting, there is a  Koreevar-Schoen type estimate for the Dirichlet energy that is shown in Theorem \ref{t:besov}.  After these somewhat lengthy preparations,  we define the extension map using the reflection map of Whitney balls in \textsection \ref{s:extend}. We obtain a Poincar\'e inequality on the local Dirichlet space corresponding the uniform domain in \textsection \ref{s:poin}.
The boundedness of the extension map in $L^2$ is fairly easy to establish (Lemma \ref{l:l2b}). The heart of the work is \textsection \ref{s:heart} where bounds on energy of the extended function is obtained. A Poincar\'e-type inequality for the extended function (Proposition \ref{p:eqpi}(a)) along with Theorem \ref{t:besov} is used to obtain bounds on the energy (and energy measure) of the extended function. These bounds on energy measure in Proposition \ref{p:eqpi} along with Lemma \ref{l:l2b} implies extension property of uniform domain with scale-invariant bounds stated in Theorem \ref{t:extend}.
Using the estimates of energy measure for the extended function obtained in Proposition \ref{p:eqpi}, we complete the proof of  Theorem \ref{t:benergy} in \textsection \ref{ss:energyb}. Finally in \textsection \ref{s:hke}, we introduce a simpler version of cutoff Sobolev inequality (Definition \ref{d:css}) and show that is equivalent to earlier version. We obtain the simplified version of cutoff Sobolev inequality using the bounds on extension operator in Theorem \ref{t:extend} and the cutoff Sobolev inequality in the ambient space in Proposition \ref{p:cs}. This along with Poincar\'e inequality (Theorem \ref{t:pi}) and the characterization of sub-Gaussian heat kernel bounds (Theorem \ref{t:hke}) is used to conclude the proof of Theorem \ref{t:hkeunif}. The key ingredients  are outlined in Figure \ref{f:outline}.
\begin{figure}[h] \label{f:outline}
	\caption{Outline of the work}
	\centering
	\includegraphics[width=0.99\textwidth]{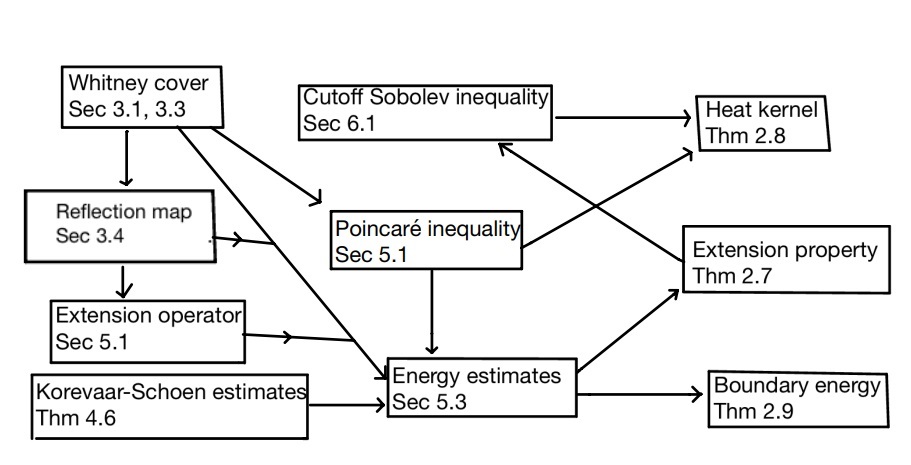}
\end{figure}
\section{Whitney covering and uniform domains}  \label{s:geometry}
We recall geometric properties of Whitney cover, uniform domains, and Whitney covers on a uniform domain.  Finally, we introduce a reflection map similar to that of Jones' reflection of Whitney cubes in $\mathbb{R}^n$ (Proposition \ref{p:reflect}).
\subsection{Whitney covering}

We recall the notion of a $\epsilon$-Whitney cover from \cite[Definition 3.16]{GS}.
%For $x \in U, r>0$,  recall that $B_U(x,r):=U \cap B(x,r)$ is the open ball in $(U,d)$ of radius $r$ centered at $x$.
\begin{definition} \label{d:whitney}
	Let $\epsilon \in (0,1/2)$ and $U \subsetneq X$. 
	We say a collection of balls $\mathfrak{R}:= \set{B_U(x_i,r_i): x_i \in U, r_i >0, i \in I}$ is an $\epsilon$-Whitney cover if it satisfies the following properties:
	\begin{enumerate}[(i)]
		\item The collection of sets $\{B_U(x_i,r_i), i \in I \}$ are pairwise disjoint.
		\item The radii $r_i$ satisfy $$r_i= \frac{\epsilon}{1+\epsilon}\delta_U(x_i), \quad \mbox{for all $i \in I$.}$$
		\item $\bigcup_{i \in I} B_U(x_i, K_\eps r_i) = U$, where $K_\epsilon=2 (1+\epsilon) \in (2,3)$.
	\end{enumerate}
	Note that since  $K_\epsilon < 3$   by (iii), we have $$\bigcup_{i \in I} B_U(x_i, 3 r_i) = U.$$
\end{definition}
The existence of such a Whitney cover follows from Zorn's lemma as we recall below. We also record a few elementary  geometric properties of the Whitney cover for future use.
\begin{proposition} \label{p:whitney}
	Let $U \subsetneq X$ be a non-empty open set and $\epsilon \in (0,1/2)$. 
	\begin{enumerate}[(a)]
		\item There exists an $\epsilon$-Whitney cover $\mathfrak{R}= \set{B_U(x_i,r_i): x_i \in U, r_i >0, i \in I}$ of $U$ such that the following properties hold.
		\item (distance to boundary) For any $B_U(x_i,r_i) \in \mathfrak{R}$ and for any $y \in B_U(x_i,3r_i)$, we have
		\begin{equation} \label{e:whitb}
		\frac{1-2\epsilon}{1+\epsilon}\delta_U(x_i) <\delta_U(y) < \frac{1+4\epsilon}{1+\epsilon}  \delta_U(x_i)
		\end{equation}
		\item (radius comparison) For any $\lambda>1,\epsilon \in (1,1/2)$ such that $(\lambda-1)\epsilon<1$, for any $\epsilon$-Whitney cover  $\mathfrak{R}= \set{B_U(x_i,r_i): x_i \in U, r_i >0, i \in I}$ of $U$, and for any $i,j \in I$ such that $B_U(x_i,\lambda r_i) \cap B_U(x_j,\lambda r_j) \neq \emptyset$, we have 
		\begin{equation} \label{e:whitc}
		\frac{1-(\lambda-1)\epsilon}{1+(\lambda+1)\epsilon}r_j \le	 r_i \le \frac{1+(\lambda+1)\epsilon}{1-(\lambda-1)\epsilon} r_j.
		\end{equation}
		In particular, if $i,j \in I, i \neq j, B_U(x_i,\lambda r_i) \cap B_U(x_j,\lambda r_j) \neq \emptyset$ implies
		\begin{equation}\label{e:distnei}
		r_i \vee r_j \le	d(x_i,x_j) \le \lambda \left(1+ \frac{1+(\lambda+1)\epsilon}{1-(\lambda-1)\epsilon}\right) (r_i \wedge r_j).
		\end{equation}
		%	In particular, 
		%	\begin{equation}\label{e:neighbor}
		%	B(x_j,r_j) \subset B(x_i,3r_i+ 4r_j) \subset B\left(x_i,\left(3+ 4 \frac{1+4\epsilon}{1-2\epsilon}\right)r_i\right).
		%	\end{equation}
		\item (bounded overlap) Let  $(X,d)$ be a doubling metric space and  let $\epsilon$-Whitney cover  $\mathfrak{R}= \set{B_U(x_i,r_i): x_i \in U, r_i >0, i \in I}$ of $U \subsetneq X$. Then there exists a constant $C\in [1,\infty)$ such that
		$$\sum_{i \in I} \one_{B_U(x_i,r_i/\epsilon)} \le C.$$
%		\item (radius comparison, another version) ) Let $\epsilon \in (0,1/5)$ For any $\epsilon$-Whitney cover  $\mathfrak{R}= \set{B_U(x_i,r_i): x_i \in U, r_i >0, i \in I}$ of $U$, and for any $i,j \in I$ such that $B_U(x_i,6r_i) \cap B_U(x_j,6r_j) \neq \emptyset$, we have 
%		\begin{equation}\label{e:whitc6}
%		\frac{1-5\epsilon}{1+7\epsilon}r_j \le	 r_i \le \frac{1+7\epsilon}{1-5\epsilon} r_j.
%		\end{equation}
%		In particular, if $i,j \in I, i \neq j, B_U(x_i,3r_i) \cap B_U(x_j,3r_j) \neq \emptyset$ implies
%		\begin{equation}\label{e:distnei}
%		r_i \vee r_j \le	d(x_i,x_j) \le 6\left(1+ \frac{1+7\epsilon}{1-5\epsilon}\right) (r_i \wedge r_j).
%		\end{equation}
		\item Let $(X,d)$ is a doubling metric space and $\epsilon \in (0,1/5)$. Then for any $\epsilon$-Whitney cover $\mathfrak{R}$ of $U$  there exists $N \in \bN$ such that  
		\[
		\# \{ B_U(x_i,r_i) \in \mathfrak{R}: B_U(x_i,6r_i) \cap B_U(x,6r) \neq \emptyset \} \le N \q \mbox{for all $B_U(x,r) \in \mathfrak{R}$.}
		\]
	\end{enumerate}
\end{proposition}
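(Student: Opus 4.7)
My approach for Proposition \ref{p:whitney} is elementary and proceeds part by part.

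For part (a), I construct $\mathfrak{R}$ by Zorn's lemma applied to the family of collections of pairwise disjoint balls of the form $B_U(x_\alpha, \tfrac{\epsilon}{1+\epsilon}\delta_U(x_\alpha))$ with $x_\alpha \in U$, ordered by inclusion. A maximal such collection $\mathfrak{R} = \{B_U(x_i, r_i)\}_{i \in I}$ gives properties (i) and (ii) of Definition \ref{d:whitney} immediately. For the covering property, take any $y \in U$, set $r_y = \tfrac{\epsilon}{1+\epsilon} \delta_U(y)$, and use maximality to produce $i \in I$ with $B_U(y, r_y) \cap B_U(x_i, r_i) \neq \emptyset$. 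The $1$-Lipschitz property of $\delta_U(\cdot)$, applied to $d(x_i, y) < r_i + r_y$, shows that $r_y$ and $r_i$ are comparable and that $d(x_i, y) \leq K_\epsilon r_i$, so $y \in B_U(x_i, K_\epsilon r_i)$. Part (b) is immediate from the same $1$-Lipschitz property: for $y \in B_U(x_i, 3r_i)$ one has $|\delta_U(y) - \delta_U(x_i)| \leq d(x_i, y) < 3r_i = \tfrac{3\epsilon}{1+\epsilon}\delta_U(x_i)$, and rearranging yields both estimates in \eqref{e:whitb}.

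Part (c) reduces to a direct algebraic manipulation. The intersection hypothesis gives $d(x_i, x_j) < \lambda(r_i + r_j)$; combining with $|\delta_U(x_i) - \delta_U(x_j)| \leq d(x_i, x_j)$ and the identity $\delta_U(x_k) = \tfrac{1+\epsilon}{\epsilon} r_k$ produces the linear inequality $(1 + \epsilon - \lambda\epsilon) r_i \leq (1 + \epsilon + \lambda\epsilon) r_j$. The hypothesis $(\lambda-1)\epsilon < 1$ ensures the coefficient on the left is positive, and dividing yields \eqref{e:whitc}. For \eqref{e:distnei}, disjointness of $B_U(x_i, r_i)$ and $B_U(x_j, r_j)$ forces $d(x_i, x_j) \geq r_i \vee r_j$ (otherwise one center would lie in the other ball); the upper bound follows from $d(x_i, x_j) < \lambda(r_i + r_j)$ combined with \eqref{e:whitc} to replace $r_i \vee r_j$ by a constant multiple of $r_i \wedge r_j$.

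For parts (d) and (e) I run the same packing argument. In (d), fix $y \in U$ and let $J := \{j \in I : y \in B_U(x_j, r_j / \epsilon)\}$; picking any reference index $i \in J$ and applying (c) with $\lambda = 1/\epsilon$ (valid since $(\lambda - 1)\epsilon = 1 - \epsilon < 1$) shows $r_j \asymp r_i$ uniformly for $j \in J$ and $d(x_i, x_j) \lesssim r_i$. Hence the pairwise disjoint balls $\{B_U(x_j, r_j)\}_{j \in J}$ all lie inside a single ball $B(x_i, C r_i)$ with radii comparable to $r_i$, and the doubling property of $(X,d)$ caps $\#J$ by a constant depending only on the doubling constant of $(X,d)$ and on $\epsilon$. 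Part (e) is the same packing argument with $\lambda = 6$: the strengthened assumption $\epsilon \in (0, 1/5)$ is precisely what ensures $(\lambda - 1)\epsilon = 5\epsilon < 1$ so that (c) applies.

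I expect the main subtlety to be the constant tracking in part (c), where keeping control of the denominator $1 - (\lambda - 1)\epsilon$ is what both yields the sharp bound \eqref{e:whitc} and dictates the tightened hypothesis $\epsilon \in (0, 1/5)$ needed in part (e). Everything else reduces to the $1$-Lipschitz property of $\delta_U$, elementary consequences of disjointness, and doubling; once (c) is in hand with sharp constants, parts (d) and (e) follow at once by packing.
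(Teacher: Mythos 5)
Your proposal is correct and follows essentially the same strategy as the paper: Zorn's lemma for (a), the $1$-Lipschitz property of $\delta_U$ for (b) and (c), and packing plus metric doubling for (d) and (e). The only notable difference is organizational — you systematically invoke part (c) (with $\lambda = 1/\epsilon$ and $\lambda = 6$ respectively) to obtain the radius comparability needed in (d) and (e), whereas the paper derives the needed bounds directly for (d) and then reduces (e) to (d); both routes are valid and lead to the same packing conclusion.
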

\begin{proof}
	\begin{enumerate}[(a)]
%		\item This is an easy consequence of Zorn's lemma by picking a maximal collection satisfying properties (i) and (ii) in Definition \ref{d:whitney}.
		\item Let $\Omega$ denote the partially ordered (by inclusion) set  consisting of collection of balls $\{B_U(x_i,r_i): x_i \in U, r_i >0, i \in I\}$ that satisfies the conditions (i) and (ii) in Definition \ref{d:whitney}. 
		If $\sC$ is a chain, then it easy to see that $\bigcup_{\mathfrak{A} \in \sC} \mathfrak{A} \in \Omega$. 
		So by Zorn's lemma, there exists a maximal element $\mathfrak{R}=\set{B_U(x_i,r_i): x_i \in U, r_i >0, i \in I} \in \Omega$.
		
		Clearly, $\mathfrak{R}$ satisfies properties (i) and (ii) in Definition \ref{d:whitney}. Next, we show that  Definition \ref{d:whitney}(iii) also holds. 
		Suppose to the contrary that (iii) does not hold, then there exists $y \notin U \setminus \bigcup_{i \in I} B_U(x_i,K_\epsilon r_i)$ and hence
		\begin{equation}\label{e:wh1}
			d(x_i,y) \ge K_\epsilon r_i = \frac{K_\epsilon \epsilon}{1+\epsilon} \delta_U(x_i) \quad \mbox{for all $i \in I$.}
		\end{equation}
		Since $\mathfrak{R}$ is maximal, there exists $j \in I$ such that $B(x_j,r_j) \cap B(y,\epsilon \delta_U(y)/(1+\epsilon)) \neq \emptyset$. Therefore by the triangle inequality and Definition \ref{d:whitney}(ii), we have
		\begin{equation}
			\label{e:wh2}
			d(x_j,y) < \frac{\epsilon}{1+\epsilon} \left(\delta_U(x_j)+ \delta_U(y)\right).
		\end{equation}
		By \eqref{e:wh2},
		\begin{equation*}
			\delta_U(y) \le d(y,x_j) + \delta_U(x_j) < \frac{\epsilon}{1+\epsilon}\left(\delta_U(x_j)+ \delta_U(y)\right) + \delta_U(x_j)
		\end{equation*}
		and hence
		\begin{equation}
			\label{e:wh3}
			\delta_U(y) < (1+2 \epsilon) \delta_U(x_j).
		\end{equation}
		Combining \eqref{e:wh1} and \eqref{e:wh2}, 
		$\frac{K_\epsilon \epsilon}{1+\epsilon}  \delta_U(x_j) \le d(x_j,y) < \frac{\epsilon}{1+\epsilon}  \left(\delta_U(x_j)+ \delta_U(y)\right)$
		which implies
		\begin{equation}
			\label{e:wh4}
			(K_\epsilon - 1) \delta_U(x_j) <  \delta_U(y) <  (1+2 \epsilon) \delta_U(x_j).
		\end{equation}
		This yields the desired contradiction since $K_\epsilon = 2(1+\epsilon)$.
		\item 
		Since $y \in B_U(x_i,3r_i)$, we have $$\delta_U(y) < \delta_U(x_i) + 3r_i= \delta_U(x_i)\left(1+ \frac{3 \epsilon}{1+\epsilon}\right)$$ which implies the upper bound on $\delta_U(y)$.
		For the lower bound, we use $$\delta_U(x_i) < \delta_U(y) + 3r_i= \delta_U(y)+ \frac{3 \epsilon}{1+\epsilon} \delta_U(x_i), \quad \delta_U(y) > \frac{1-2\epsilon}{1+\epsilon} \delta_U(x_i).$$
		\item 
		If $B(x_i,\lambda r_i) \cap B(x_j , \lambda r_j) \neq \emptyset$, then by the triangle inequality 
		\begin{align*}
			\delta_U(x_j) \le \delta_U(x_i) + d(x_i,x_j) < \delta_U(x_i) +  \lambda \frac{\epsilon}{1+\epsilon} (\delta_U(x_i)+\delta_U(x_j))
		\end{align*}
		and hence $(1-(\lambda -1)\epsilon)\delta_U(x_j)<(1+(\lambda +1)\epsilon)\delta_U(x_j)$. This is equivalent to \eqref{e:whitc}.  
		The lower bound on $d(x_i,x_j)$ follows from $B(x_i,r_i) \cap B(x_j,r_j)=\emptyset$ while the upper bound follows from $B(x_i,\lambda r_i) \cap B(x_j,\lambda r_j) \neq \emptyset$, the triangle inequality and \eqref{e:whitc}.
		%The inclusions \eqref{e:neighbor} follows from the triangle inequality.
		\item Suppose that $y \in B_U(x_i,r_i/\epsilon)$ for some $i \in I$. By the triangle inequality, 
		$$\delta_U(x_i) \le d(x_i,y) +\delta_U(y) < \frac{r_i}{\epsilon} +  \delta_U(y)\le \frac{1}{1+\epsilon}\delta_U(x_i)+\delta_U(y).$$
		Similarly, 
		$$\delta_U(y) < \delta_U(x_i)+ \frac{r_i}{\epsilon} = \delta_U(x_i)   \frac{2+\epsilon}{1+\epsilon} $$
		Therefore 
		\begin{equation} \label{e:wc1}
			\frac{\epsilon}{2+\epsilon} \delta_U(y) < r_i < \delta_U(y)  \quad \mbox{whenever $y \in  B_U(x_i,r_i/\epsilon), i \in I$.}
		\end{equation}
		Therefore the set $A_y:=\{x_i : i \in I, y \in B(x_i,r_i/\epsilon)\}$ is contained in $B_U(y,\delta_U(y)/\epsilon)$ and any two distinct points in $A_y$ is separated by a distance of at least $ {\epsilon} \delta_U(y)/(2+\epsilon)$. The desired conclusion follows from the metric doubling property.
%		\item The proof from the same argument as given in (c) above.
		\item This is an easy consequence of (d) and the metric doubling property. \qedhere
	\end{enumerate}
\end{proof}

\subsection{Basic properties of uniform domains}
An important property of uniform domains is that is satisfies a  {corkscrew condition} whose definition we recall below.
\begin{definition}\label{d:corkscrew}
	Let $V \subset X$. We say that $V$ satisfies the \emph{corkscrew condition} if there exists $\epsilon>0$ such that for all $x \in \overline{V}$ and $0 < r \le \diam(V)$, the set $B(x,r) \cap V$ contains a ball of with radius $\epsilon r$. 
\end{definition}
Every uniform domains satisfies the corkscrew condition as we recall now. 
The same argument presented in \cite[Lemma 4.2]{BS} for length uniform domains also shows that every   $A$-uniform domain (with our weaker definition of uniform domains) also satisfies the corkscrew condition.
\begin{lemma} \cite[Lemma 4.2]{BS} \label{l:corkscrew}
	Let $U \subsetneq X$ be an  $A$-uniform domain. For any $x \in \overline{U}, r>0$ such that $U \setminus B(x,r) \neq \emptyset$ (in particular, if $r<\diam(U,d)/2$), there exists a ball $B(y,r/(3A)) \subset U \cap B(x,r)$ with radius $r/(3A)$. 
\end{lemma}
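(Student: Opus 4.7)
My plan is to produce the desired ball by walking along an $A$-uniform curve from an interior approximation of $x$ to any point of $U$ outside $B(x,r)$ and selecting the point on the curve that lies at distance $r/2$ from $x$. Fix $w \in U \setminus B(x,r)$. If $x \in U$, set $x_{0} := x$; otherwise $x \in \partial U$ is a limit point of $U$, and I choose $x_{0} \in U$ with $d(x,x_{0}) < r/6$. Applying Definition~\ref{d:uniform} to the pair $x_{0}, w \in U$ produces a curve $\gamma : [a,b] \to U$ with $\gamma(a) = x_{0}$, $\gamma(b) = w$, and $\delta_{U}(z) \geq A^{-1}\min\bigl(d(x_{0},z),\, d(w,z)\bigr)$ for every $z \in \gamma$.

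Because $d(x,\gamma(a)) \leq r/6 < r/2$ and $d(x,\gamma(b)) \geq r > r/2$, the continuity of $t \mapsto d(x,\gamma(t))$ and the intermediate value theorem yield some $t_{0} \in [a,b]$ such that $z := \gamma(t_{0})$ satisfies $d(x,z) = r/2$. Two applications of the triangle inequality then give
\[
d(x_{0},z) \geq d(x,z) - d(x,x_{0}) \geq r/2 - r/6 = r/3, \qquad d(w,z) \geq d(x,w) - d(x,z) \geq r/2,
\]
and substituting these into the $A$-uniformity inequality produces $\delta_{U}(z) \geq A^{-1} \cdot r/3 = r/(3A)$. Finally, $A \geq 1$ forces $r/(3A) \leq r/3$, so $B(z, r/(3A))$ is contained in $U$ (its radius is at most $\delta_{U}(z)$) and in $B(x, r/2 + r/3) \subset B(x,r)$, completing the claim.

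I do not anticipate any real obstacle: the argument is a direct unpacking of the definition of an $A$-uniform domain. The only two delicate points are (i) handling $x \in \partial U$, where the uniform curve condition applies only to pairs of points \emph{in} $U$, which forces the replacement of $x$ by a nearby interior point $x_{0}$; and (ii) calibrating the approximation scale (here $r/6$) so that after subtracting it from $r/2$ the resulting lower bound matches the $r/(3A)$ quoted in the statement.
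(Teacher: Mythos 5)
Your argument is correct and is the standard proof of the corkscrew property for uniform domains. The paper itself gives no proof and merely cites \cite[Lemma 4.2]{BS} with the remark that the argument there carries over to the weaker (diameter-based) definition of uniform domain; your write-up fills that gap directly. The constants all check out: with $d(x,z)=r/2$ and $d(x,x_0)<r/6$ you get $\min(d(x_0,z),d(w,z))\ge r/3$, hence $\delta_U(z)\ge r/(3A)$, and since $A\ge 1$ the ball $B(z,r/(3A))$ lies in $B(x,r/2+r/3)\subset B(x,r)$ and in $U$. The two places you flag as delicate — approximating a boundary point by an interior one, and calibrating the $r/6$ slack — are indeed the only places where care is needed, and both are handled correctly. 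One cosmetic remark: when $x\in U$ you in fact get the stronger bound $\delta_U(z)\ge r/(2A)$, but $r/(3A)$ is all that is claimed, so the unified statement is fine.
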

The doubling property of a measure $m$  is preserved under restriction to uniform domains. This is the content of the following lemma.
\begin{lemma}\cite[Theorem 2.8]{BS} \label{l:doubling}
	Let $m$ be a doubling measure on $X$ and let $U \subsetneq X$ be a non-empty uniform domain. Then 
	$$m(\partial U)=0$$ and
	$m$ is doubling on $\overline{U}$ and doubling on $U$.
\end{lemma}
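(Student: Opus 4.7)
The plan is to exploit the corkscrew condition (Lemma \ref{l:corkscrew}) together with the ambient doubling property of $m$ on $X$. The key local estimate: for any $x \in \overline{U}$ and any $r \in (0, \diam(U)/2)$, the corkscrew condition furnishes a ball $B(y, r/(3A)) \subset U \cap B(x,r)$. Since $y \in B(x,r)$ gives $B(x,2r) \subset B(y,3r)$, iterating the ambient doubling of $m$ a fixed number of times (depending only on $A$ and the doubling constant) yields a constant $C \ge 1$ with
\[
m(B(x,2r)) \le m(B(y,3r)) \le C\, m\bigl(B(y, r/(3A))\bigr) \le C\, m\bigl(B(x,r) \cap U\bigr).
\]

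First I would prove $m(\partial U) = 0$. For any $x \in \partial U$ and small $r$, the previous display gives
\[
m(B(x,r) \cap \partial U) \le m(B(x,r)) - m(B(x,r) \cap U) \le \bigl(1 - C^{-1}\bigr) m(B(x,r)),
\]
so the upper density of $\partial U$ at every boundary point is at most $1 - C^{-1} < 1$. Since $m$ is a doubling measure on $X$, the Lebesgue differentiation theorem for doubling measures states that for $m$-a.e.\ point in a measurable set the upper density equals $1$; comparing with the bound above forces $m(\partial U) = 0$. Next, for the doubling on $\overline{U}$ at scales $r < \diam(U)/2$, one simply combines $m(B(x,2r) \cap \overline{U}) \le m(B(x,2r))$ with the same display above. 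For large scales $r \ge \diam(U)/2$ (only relevant when $\diam(U) < \infty$, in which case $\overline{U}$ is bounded and hence of finite $m$-measure since balls are relatively compact), a single application of the corkscrew condition at scale $\diam(U)/2$ produces a ball of radius $\diam(U)/(6A)$ contained in $B(x,r) \cap U$ whose measure controls $m(\overline{U}) \ge m(B(x,2r) \cap \overline{U})$ via finitely many ambient doublings, uniformly in $x$. Doubling on $U$ then follows at once from doubling on $\overline{U}$ together with $m(\partial U) = 0$, since for any ball $B(x,r)$ we have $m(B(x,r) \cap U) = m(B(x,r) \cap \overline{U})$.

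There is no serious obstacle here; the proof is essentially a transcription of the classical Euclidean argument, with the corkscrew condition replacing the use of balls inscribed in a Lipschitz domain. The only mild technicality is the large-scale case when $\diam(U) < \infty$, handled by noting that both sides of the doubling inequality are then comparable to $m(\overline{U})$ itself.
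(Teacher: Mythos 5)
Your proof is correct, and it uses the same essential ingredients as the paper: the corkscrew condition of Lemma~\ref{l:corkscrew}, the ambient doubling of $m$, and the Lebesgue differentiation theorem for doubling measures. The one structural difference is that you give a self-contained derivation of the doubling of $m$ on $\overline{U}$ and $U$ directly from the local corkscrew estimate, whereas the paper simply invokes \cite[Theorem~2.8]{BS} for that part and supplies a proof only for $m(\partial U)=0$. Your key display $m(B(x,2r)) \le C\, m(B(x,r)\cap U)$ is precisely what drives both your doubling bound and your density estimate at the boundary, which tightens the logical dependence compared to the paper's version (which states $m(\partial U)=0$ \emph{after} the doubling on $\overline{U}$). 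One cosmetic point: Lemma~\ref{l:corkscrew} applies when $U \setminus B(x,r) \neq \emptyset$, guaranteed by $r < \diam(U)/2$ but potentially failing at $r = \diam(U)/2$ (e.g.\ when $U$ is essentially a ball centered at $x$), so in the large-scale case you should invoke it at a scale strictly less than $\diam(U)/2$; this is a trivial adjustment and does not affect the argument.
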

\begin{proof}
	By the corkscrew condition (Lemma \ref{l:corkscrew}) and \cite[Theorem 2.8]{BS} we have that $m$ is doubling on $\overline{U}$ and on $U$. 
	
	To see that $m(\partial U)=0$, note that by  the corkscrew condition and the doubling property on $\overline{U}$ we have
	\[
	\limsup_{r \downarrow 0} \fint_{B(x,r)} \one_{\partial U}(y)\,m(dy) \le 1- \liminf_{r \downarrow 0}  \frac{m( U \cap B(x,r) )}{m(B(x,r))}< 1, % = \limsup_{r \downarrow 0} \frac{m(\partial U \cap B(x,r) )}{m(B(x,r))}
	\]
	for all $x \in \partial U$. By the Lebesgue differentiation theorem \cite[Theorem 1.8]{Hei}, we conclude that $m(\partial U)=0$.
\end{proof}

We recall some  well known properties of  doubling measures on a metric space $(X,d)$.
\begin{lemma} \label{l:vd}
	Let $m$ be a doubling measure on $V$ where $V \subset X$ with doubling constant $D_0$ as given in Definition \ref{d:doubling}. Then 
	\begin{equation}\label{e:vd}
	m(V \cap B(x,s)) \le D_0^2 \left( \frac{d(x,y)+s}{r}\right)^\alpha m(V\cap B(y,r)), \quad \mbox{for all $x \in V, 0<r<s<\infty$,}
	\end{equation}
	where $\alpha=\log_2 D_0$.
	If $m(V)>0$, then the metric space $(V,d)$ satisfies the metric doubling; that is there exists $N \in \bN$ such that every ball $B_V(x,r)$ for $x \in V, r>0$ can be covered by at most $N$ balls of radii $r/2$.
\end{lemma}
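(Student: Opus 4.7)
The plan is to derive both assertions by direct iteration of the doubling hypothesis from Definition \ref{d:doubling}; neither part requires machinery beyond the triangle inequality and a standard packing argument, so I anticipate bookkeeping rather than a genuine obstacle.

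For the volume comparison \eqref{e:vd}, fix $x \in V$ and observe that the inequality is only informative when $V \cap B(y,r) \ne \emptyset$, so up to the loss of at most one extra doubling factor (absorbed in the slack between $D_0$ and $D_0^2$) one may take $y \in V$. First I would apply the triangle inequality to embed
\[
V \cap B(x,s) \subset V \cap B\bigl(y,\,d(x,y)+s\bigr),
\]
and then pick the smallest integer $n \ge 0$ with $2^n r \ge d(x,y)+s$. Since $r<s$ forces $d(x,y)+s > r$ and hence $n \ge 1$, we have $2^n r < 2(d(x,y)+s)$; iterating the doubling property $n$ times at the center $y \in V$ gives
\[
m\bigl(V \cap B(y,2^n r)\bigr) \le D_0^n\, m\bigl(V \cap B(y,r)\bigr),
\]
while $D_0^n = 2^{n\alpha} < 2^\alpha \bigl((d(x,y)+s)/r\bigr)^\alpha = D_0 \bigl((d(x,y)+s)/r\bigr)^\alpha$, yielding \eqref{e:vd} with room to spare.

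For the metric doubling assertion, I would fix $x \in V$, $r>0$, and pick a maximal $(r/2)$-separated subset $\{z_1,\ldots,z_N\}$ of $B_V(x,r)$; the task is to bound $N$ uniformly in $x$ and $r$. By maximality, $\{B_V(z_i,r/2)\}_{i=1}^N$ covers $B_V(x,r)$, and by the separation the balls $B_V(z_i,r/4)$ are pairwise disjoint and contained in $B_V(x,2r)$. The plan is to combine the forward doubling $m(B_V(x,2r)) \le D_0\, m(B_V(x,r))$ with the reverse bound $m(B_V(x,r)) \le m(B_V(z_i,2r)) \le D_0^3\, m(B_V(z_i, r/4))$ (three doublings at the center $z_i \in V$, since $2r/(r/4)=2^3$), together with disjointness, to obtain
\[
N\, m(B_V(x,r)) \le D_0^3 \sum_{i=1}^N m\bigl(B_V(z_i, r/4)\bigr) \le D_0^3\, m(B_V(x,2r)) \le D_0^4\, m(B_V(x,r)).
\]
Finally, positivity $m(B_V(x,r))>0$, which is a quick consequence of $m(V)>0$ and the chain $m(B_V(x,2^k r)) \le D_0^k\, m(B_V(x,r))$ (that would otherwise force $m(V)=0$), permits cancellation and gives $N \le D_0^4$ uniformly in $(x,r)$, hence metric doubling with constant $\lceil D_0^4 \rceil$.
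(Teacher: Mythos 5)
Your proof is correct and is the standard argument; the paper states Lemma \ref{l:vd} without proof, as a well-known fact, so there is no in-paper argument to compare against. Both halves check out: the dyadic iteration for \eqref{e:vd} (with $y \in V$, which is clearly the intended reading since the doubling hypothesis in Definition \ref{d:doubling} only controls balls centered in $V$, and all uses of \eqref{e:vd} in the paper have $y$ in the relevant $V$) in fact yields the sharper constant $D_0$; and the maximal-net packing argument for metric doubling is complete, including the needed positivity of $m(B_V(x,r))$ and the finiteness of $m(B_V(x,2r))$, which follows from the paper's standing assumption that $m$ is Radon and balls are relatively compact.

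One small quibble: your parenthetical claim that the case $y \notin V$ costs ``at most one extra doubling factor'' is not actually justified as written. If $v \in V \cap B(y,r)$ lies near $\partial B(y,r)$, the only inner comparison available is $B(v,\,r - d(y,v)) \subset B(y,r)$, whose radius can be arbitrarily small relative to $r$, so you cannot pass to $v$ at the price of a fixed number of doublings. This does not affect the correctness of what you prove, because the lemma is meant for $y \in V$ (and your core computation handles that case with room to spare), but the aside should either be dropped or the hypothesis $y \in V$ stated explicitly.
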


\subsection{Whitney cover on a uniform domain}

Let $U \subset X$ be a  $A$-uniform domain for some $A \ge 1$ and let $\mathfrak{R}$ be an $\epsilon$-Whitney cover of $U$ for some $\epsilon \in (0,1/2)$.  For any ball $B_U(x,r)$, we define
\begin{equation}
	\label{e:defRB}
	\mathfrak{R}(B_U(x,r)) = \set{B_U(x_i,r_i) \in \mathfrak{R}: B_U(x_i,3r_i) \cap B_U(x,r) \neq \emptyset}.
\end{equation}
We think of $\mathfrak{R}(B_U(x,r))$ as the Whitney balls near $B_U(x,r)$. In the following lemma we show some basic properties of $\mathfrak{R}(B_U(x,r))$.
\begin{lemma}\label{l:central}
	Let $U \subsetneq X$ be a  $A$-uniform domain for some $A \ge 1$.
	Suppose that $\mathfrak{R}= \set{ B_U(x_i,r_i): i \in I}$ be an $\epsilon$-Whitney cover of $U$ for some $\eps \in (0,1/14)$.
	Let $B_U(x,r)$ be a ball such that $x \in \overline{U},   \delta_U(x) \le  r < \diam(U)/2$. Then 
	\begin{equation} B_U(x,r) \subset \bigcup_{B_U(x_i,r_i) \in \mathfrak{R}(B_U(x,r))} B_U(x_i,3r_i) \subset B_U(x,2r),\label{e:incRB} \end{equation}
	and there exists a ball $B_U(x_0,r_0) \in  \mathfrak{R}(B_U(x,r))$ such that  
	\begin{equation}\label{e:bndr0}
		\frac{\eps}{3A(4+\eps)}r \le 	r_0 \le  \frac{2 \eps}{1-2\eps} r. 
	\end{equation}
\end{lemma}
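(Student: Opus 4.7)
\textbf{Proof plan for Lemma \ref{l:central}.}
The plan is to handle the two statements of the lemma separately, establishing the sandwich inclusion \eqref{e:incRB} first and then producing the distinguished Whitney ball with two-sided control on its radius via the corkscrew condition for uniform domains.

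For the first inclusion in \eqref{e:incRB}, I would invoke the covering property in Definition \ref{d:whitney}(iii), which gives $\bigcup_{i\in I} B_U(x_i,3r_i)=U$ since $K_\epsilon<3$. For every $y \in B_U(x,r)$, picking any $i\in I$ with $y \in B_U(x_i,3r_i)$ immediately yields $B_U(x_i,3r_i) \cap B_U(x,r) \neq \emptyset$, so this $B_U(x_i,r_i)$ lies in $\mathfrak{R}(B_U(x,r))$. For the reverse inclusion $\bigcup_{B_U(x_i,r_i)\in\mathfrak{R}(B_U(x,r))} B_U(x_i,3r_i) \subset B_U(x,2r)$, the key is to show that every $B_U(x_i,r_i) \in \mathfrak{R}(B_U(x,r))$ has radius $r_i < r/6$. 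Picking a witness point $z \in B_U(x_i,3r_i) \cap B_U(x,r)$, the triangle inequality combined with the hypothesis $\delta_U(x) \le r$ gives $\delta_U(x_i) \le \delta_U(z)+3r_i \le 2r+3r_i$, and since $r_i=\frac{\epsilon}{1+\epsilon}\delta_U(x_i)$ this algebraically rearranges to $r_i \le \frac{2\epsilon}{1-2\epsilon}r$. Because $\epsilon<1/14$ forces $\frac{2\epsilon}{1-2\epsilon}<1/6$, a second application of the triangle inequality through $z$ yields $d(w,x) < 6r_i+r < 2r$ for any $w \in B_U(x_i,3r_i)$, completing the second inclusion. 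Note that this same bound $r_i \le \frac{2\epsilon}{1-2\epsilon}r$ applies to any ball in $\mathfrak{R}(B_U(x,r))$, and therefore serves as the upper bound in \eqref{e:bndr0}.

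For the lower bound in \eqref{e:bndr0}, I would use the corkscrew condition from Lemma \ref{l:corkscrew}, which is available since $r<\diam(U)/2$. This produces a point $y \in U \cap B(x,r)$ with $\delta_U(y) \ge r/(3A)$. Choose $B_U(x_0,r_0) \in \mathfrak{R}$ with $y \in B_U(x_0,3r_0)$ (again using that the $3r$-dilates of the Whitney cover exhaust $U$). The witness $y$ lies in $B_U(x_0,3r_0) \cap B_U(x,r)$, hence $B_U(x_0,r_0) \in \mathfrak{R}(B_U(x,r))$. To bound $r_0$ from below, the reverse triangle inequality gives $\delta_U(x_0) \ge \delta_U(y)-3r_0 \ge r/(3A)-3r_0$, and substituting $\delta_U(x_0)=(1+\epsilon)r_0/\epsilon$ and solving for $r_0$ yields $r_0 \ge \frac{\epsilon}{3A(1+4\epsilon)}r$. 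The inequality $1+4\epsilon \le 4+\epsilon$ (valid for $\epsilon \le 1$) then upgrades this to the claimed bound $r_0 \ge \frac{\epsilon}{3A(4+\epsilon)}r$.

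The argument is essentially geometric bookkeeping; the only nontrivial ingredient is the corkscrew condition, which supplies a point well inside $U \cap B(x,r)$ whose distance to $\partial U$ is quantitatively comparable to $r$. I do not anticipate a genuine obstacle, but care is needed in tracking the constants so that the threshold $\epsilon<1/14$ is exactly what is consumed to make the second inclusion strict through $\frac{2\epsilon}{1-2\epsilon}<1/6$.
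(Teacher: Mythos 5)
Your argument is correct and follows essentially the same route as the paper's proof: both use the covering property for the first inclusion, derive the uniform radius bound $r_i \le \tfrac{2\epsilon}{1-2\epsilon}r$ to get the second inclusion, and invoke the corkscrew condition (Lemma \ref{l:corkscrew}) to produce the central ball and bound $r_0$ from below. The only cosmetic difference is that you derive the bound on $\delta_U(x_i)$ by a direct triangle inequality rather than citing Proposition \ref{p:whitney}(b), and you note that the upper bound in \eqref{e:bndr0} is already a special case of the general radius bound, which is a small simplification over the paper's separate derivation.
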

\begin{proof}
	The inclusion $B_U(x,r) \subset \bigcup_{B_U(x_i,r_i) \in \mathfrak{R}(B_U(x,r))} B_U(x_i,3r_i)$ follows from Definition \ref{d:whitney}(iii) and \eqref{e:defRB}.
	
	Since $r \ge \delta_U(x)$ for any $y \in B_U(x,r)$, we have
	\begin{equation} \label{e:rb1}
		\delta_U(y) \le \delta_U(x)+ d(x,y) < 2r \quad \mbox{for any $y \in B_U(x,r)$.}
	\end{equation}
	For any $B(x_i,r_i) \in \mathfrak{R}(B_U(x,r))$ there exists $y_i \in B(x_i, 3r_i) \cap B_U(x,r)$ and hence
	%	\[
	%	\delta_U(x_i) \le \delta_U(y_i) +d(x_i,y_i) < \delta_U(y_i) + 3r_i= \delta_U(y_i) +  \frac{3\epsilon}{1+\epsilon} \delta_U(x_i).
	%	\]
	by Proposition \ref{p:whitney}(b), we have $$\delta_U(x_i) < (1+\epsilon) \delta_U(y_i)/(1-2\epsilon).$$
	Combining this with \eqref{e:rb1}, we obtain
	\[
	r_i= \frac{\epsilon}{1+\epsilon} \delta_U(x_i) < \frac{ \epsilon}{1-2\epsilon} \delta_U(y_i) <  \frac{ 2\epsilon}{1-2\epsilon} r.
	\]
	Therefore for any $z \in B_U(x_i, 3r_i)$ with $B_U(x_i,r_i) \in \mathfrak{R}(B_U(x,r))$, we have (using $\epsilon<1/14$)
	\[
	d(z,x) \le d(x,y_i) + d(y_i,x_i) + 3r_i <  d(x,y_i) + 6 r_i < \left(1+ \frac{12 \epsilon}{1-2\epsilon}\right)r<2 r,
	\]
	where $y_i \in  B(x_i, 3r_i) \cap B_U(x,r)$ is as above. This completes the proof of \eqref{e:incRB}.
	
	By Lemma \ref{l:corkscrew}, there exists $B(y,r/3A) \subset B_U(x,r)$. By \eqref{e:incRB}, there exists a ball $B_U(x_0,r_0) \in \mathfrak{R}(B_U(x,r))$ be such that $y \in B_U(x_0,3r_0)$. Therefore
	\[
	\frac{r}{3A} \le \delta_U(y) < 3 r_0 + \delta_U(x_0)= \left(3 + \frac{ 1+\epsilon}{\epsilon}\right) r_0,
	\]
	which is equivalent the lower bound  of $r_0$ in \eqref{e:bndr0}. 
	Since $\delta_U(y) \le \delta_U(x)+ d(x,y) < 2r$, we have
	\[
	\frac{1+\epsilon}{\epsilon} r_0=\delta_U(x_0) \le 3 r_0+ \delta_U(y) < 3 r_0 + 2r,
	\]
	which is equivalent to the upper bound in \eqref{e:bndr0}.
\end{proof}

We recall the notion of a central ball in a Whitney cover (cf. \cite[Definition 3.21]{GS}).
\begin{definition} \cite[Lemma 3.23]{GS}
	Let $U \subsetneq X$ be a  $A$-uniform domain for some $A \ge 1$,
	and let $\mathfrak{R}= \set{ B_U(x_i,r_i): i \in I}$ be an $\epsilon$-Whitney cover of $U$ for some $\epsilon \in (0,1/14)$.
	Let $B_U(x,r)$ such that $x \in \overline U,   \delta_U(x) \le  r < \diam(U)/2$, and let  $B_0=B_U(x_0,r_0) \in \mathfrak{R}(B_U(x,r))$ be any ball that satisfies \eqref{e:bndr0} in Lemma \ref{l:central}. Then we say that $B_0$ is a \emph{central ball} in $\mathfrak{R}(B_U(x,r))$.
\end{definition}
The following is an analogue of \cite[Lemma 3.23]{GS}.
\begin{lemma} \label{l:string}	
	Suppose that $\mathfrak{R}= \set{ B_U(x_i,r_i): i \in I}$ be an $\epsilon$-Whitney cover of $U$ for some $\epsilon \in (0,1/14)$ and let $B_U(x,r), x \in \ol U, \delta_U(x) \le r < \diam(U)/2$. Let $B_0=B_U(x_0,r_0) \in \mathfrak{R}(B_U(x,r))$ be a central ball. For any $D=B_U(x_D,r_D) \in  \mathfrak{R}(B_U(x,r))$, let $\gamma$ be a   $A$-uniform curve from $x_0$ to $x_D$. 
	\begin{enumerate}[(a)]
		\item Then there exists a finite collection of distinct balls $\mathbb{S}(D)= \{B_0^D,B_1^D,\ldots,B_l^D\}$ of length $l=l(D)$ such that $B_0=B_0^D, B_l^D=D$, and
		\begin{equation} \label{e:wu1}
			B_j^D \in \mathfrak{R}, \q  3 B_j^D \cap 3 B_{j-1}^D \neq \emptyset, \q 3 B_j^D \cap \gamma \neq \emptyset, \q \mbox{for all $j=1,\ldots,l$.}
		\end{equation}
		Here $3 B_j^D$ denotes the ball $B_U(x_j^D, 3 r_j^D)$, where $B_j^D=B_U(x_j^D,  r_j^D)$.
		\item For all $j=0,1,\ldots,l$, let $B_j^D=B_U(x_j^D,r_j^D)$ be the balls as given in (a). Then for all $j=0,1,\ldots,l$,
		\begin{equation}\label{e:wu2}
		r_j^D \le \frac{(A(4 \epsilon +1) +1 -2 \epsilon)\epsilon}{(1-2\epsilon)^2} r, \quad B_U(x_j^D,r_j^D) \subset  B_U(x,C_0r),
		\end{equation}
		and
		\begin{equation}\label{e:wu3}
		d(x_j^D,x_l^D) \le C_1 r_j^D, \quad D \subset  B_U(x_j^D,(2C_1+1)r_j^D)
		\end{equation}
		where   $C_0,C_1$ depends only on $A$ and $\epsilon$.
	\end{enumerate}
\end{lemma}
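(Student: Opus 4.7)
The plan is a chaining argument along the uniform curve $\gamma$ for part~(a), followed by an application of the defining properties of a uniform curve together with the Whitney estimates of Proposition~\ref{p:whitney}(b) for part~(b).

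For (a), parametrize $\gamma \colon [0,T] \to U$ with $\gamma(0) = x_0$ and $\gamma(T) = x_D$, set $B_0^D := B_0$ and $t_0 := 0$, and inductively, given $B_j^D$ with $\gamma(t_j) \in 3 B_j^D$, define $t_{j+1} := \sup\{t \in [t_j, T] : \gamma([t_j, t]) \subset 3 B_j^D\}$. If $t_{j+1} = T$ and $x_D \in 3 B_j^D$, terminate by setting $B_{j+1}^D := D$; otherwise use Definition~\ref{d:whitney}(iii) to pick any $B_{j+1}^D \in \mathfrak{R}$ with $\gamma(t_{j+1}) \in 3 B_{j+1}^D$. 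The overlap $3 B_j^D \cap 3 B_{j+1}^D \neq \emptyset$ will follow because, for $t$ slightly below $t_{j+1}$, continuity of $\gamma$ and openness of $3 B_{j+1}^D$ force $\gamma(t)$ to lie in both sets. Since $\gamma \subset U$ is compact, $\delta_U$ attains a strictly positive minimum on $\gamma$; together with the Whitney relation $r_i = \frac{\eps}{1+\eps}\delta_U(x_i)$ this yields a uniform positive lower bound on the radii of any ball meeting $\gamma$, so the chain terminates after finitely many steps, and repeats may be pruned (replacing a subchain between two copies of a repeated ball by that single ball) without destroying either the overlap property or the intersection with $\gamma$.

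For the radius and containment bounds in \eqref{e:wu2}, each $B_j^D$ in the chain meets $\gamma$ at some point $z_j \in 3 B_j^D$. Proposition~\ref{p:whitney}(b) gives $\delta_U(z_j) \asymp \eps^{-1} r_j^D$ with implicit constants depending only on $\eps$. The uniform diameter inequality $\diam(\gamma) \le A\, d(x_0, x_D)$, combined with the bound $d(x_0, x_D) \lesssim r$ arising from the containment $3B_0, 3D \subset B_U(x, 2r)$ in Lemma~\ref{l:central}, places $\gamma$ inside a ball $B(x, C'r)$ for some $C' = C'(A, \eps)$, so $\delta_U(z_j) \le \delta_U(x) + d(z_j, x) \lesssim r$; hence $r_j^D \lesssim r$. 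The triangle inequality with $d(x_j^D, z_j) \le 3 r_j^D \lesssim r$ puts $x_j^D$ inside a ball of radius comparable to $r$ around $x$, which together with $r_j^D \lesssim r$ yields the inclusion $B_U(x_j^D, r_j^D) \subset B_U(x, C_0 r)$.

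The main step, and where the hypothesis that $B_0$ is a central ball enters, is the distance estimate \eqref{e:wu3}. Invoke the uniform curve inequality $\delta_U(z) \ge A^{-1} \min(d(x_0, z), d(x_D, z))$ at $z = z_j$ and split into cases. If $d(x_D, z_j) \le d(x_0, z_j)$, then $d(x_D, z_j) \le A \delta_U(z_j) \lesssim r_j^D$, and the triangle inequality gives $d(x_j^D, x_D) \le 3 r_j^D + d(z_j, x_D) \lesssim r_j^D$ directly. The delicate case is $d(x_0, z_j) < d(x_D, z_j)$: one only obtains $d(x_0, z_j) \lesssim r_j^D$, hence $d(x_j^D, x_0) \lesssim r_j^D$, which by itself is insufficient because $d(x_0, x_D) \lesssim r$ might dominate $r_j^D$. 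The resolution is to exploit that $B_0$ is central, so $r_0 \asymp r$ by \eqref{e:bndr0}: the $1$-Lipschitz property of $\delta_U$ gives $|\delta_U(x_0) - \delta_U(x_j^D)| \le d(x_j^D, x_0) \lesssim r_j^D$, forcing $r_0 \lesssim r_j^D$ and therefore $r \lesssim r_j^D$ in this case, so that $d(x_j^D, x_D) \le d(x_j^D, x_0) + d(x_0, x_D) \lesssim r_j^D + r \lesssim r_j^D$. Finally, the inclusion $D \subset B_U(x_j^D, (2C_1 + 1) r_j^D)$ follows from the bound just established together with $r_D \le r_j^D + \frac{\eps}{1+\eps} d(x_j^D, x_D) \le (1 + C_1) r_j^D$, again by the $1$-Lipschitz property of $\delta_U$.
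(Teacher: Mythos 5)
Your proposal is correct. Parts (a) and the $\eqref{e:wu2}$ estimates in (b) match the paper's route: the chain is built by following the uniform curve with a supremum-of-exit-time induction and a finite-cover/compactness termination argument, and the radius and containment bounds come from trapping $\gamma$ inside a ball of radius $\lesssim r$ around $x$ via $\diam(\gamma)\le A\,d(x_0,x_D)\lesssim r$ and then invoking Proposition~\ref{p:whitney}(b) at $z_j\in 3B_j^D\cap\gamma$.

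For \eqref{e:wu3} you take a genuinely different, and somewhat leaner, case split. Both you and the paper start from the uniform-curve inequality $\delta_U(z_j)\ge A^{-1}\min\bigl(d(x_0,z_j),d(x_D,z_j)\bigr)$ and the comparison $\delta_U(z_j)\asymp \eps^{-1}r_j^D$. You split directly on which of the two distances realizes the minimum: if it is $d(x_D,z_j)$ you immediately get $d(x_j^D,x_D)\lesssim r_j^D$; if it is $d(x_0,z_j)$ you conclude $d(x_j^D,x_0)\lesssim r_j^D$ and then use the $1$-Lipschitz property of $\delta_U$ to deduce $r_0\lesssim r_j^D$, after which the centrality estimate $r_0\gtrsim r$ from \eqref{e:bndr0} forces $r\lesssim r_j^D$ and closes the argument via $d(x_0,x_D)\lesssim r$. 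The paper instead splits on whether $z_j\in 3B_0$ (using Proposition~\ref{p:whitney}(c) for overlapping Whitney balls in that case), and, when $z_j\notin 3B_0$, bounds $d(x_0,z_j)\ge 3r_0$ from below and further subcases on which term of $\min\bigl(3r_0,\,d(x_D,x_j^D)-3r_j^D\bigr)$ dominates. Both arguments hinge on centrality of $B_0$ in exactly the case where $z_j$ is on the $x_0$ side; your use of the Lipschitz property of $\delta_U$ replaces the paper's three-case bookkeeping with two cases and is a clean alternative. The closing inclusion $D\subset B_U\bigl(x_j^D,(2C_1+1)r_j^D\bigr)$ is obtained by the same Lipschitz computation as the paper.
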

\begin{proof}
	\begin{enumerate}[(a)]
		\item

		By Definition \ref{d:whitney}(iii), the balls $\{B_U(y,3s): B_U(y,s) \in \mathfrak{R}\}$ cover $\gamma$.

		Let the $A$-uniform curve $\gamma$ be parameterized as $\gamma:[a,b] \to U$. We choose finite subcover $\{B_U(y_i,3 s_i): B_U(y_i,s_i) \in \mathfrak{R}, 1 \le i \le N\}$ of $\gamma$ that contains $B_0$ and $D$. We pick $B_0^D=B_0$ and choose $B_j^D$ inductively as follows. Suppose we have choose $B_0^D,\ldots,B_{j-1}^D$ for some $j \in \bN$. If $B_{j-1}^D=D$, then we set $l=j-1$. If $B^{D}_{j-1} \neq D$ and $3B_{j-1}^D \cap 3D \neq \emptyset$, then we set $B_j^D=D$. If   $B^{D}_{j-1} \neq D$ and $3B_{j-1}^D \cap 3D = \emptyset$, then  we set $s= \sup \{ t \in [a,b]: \gamma(t) \in 3B_{j-1}^D \}$ and pick some $1 \le i \le N$ such that $\gamma(s) \in B_U(y_i,3s_i)$ and set $B_{j}^D= B_U(y_i,s_i)$.
		Evidently, this construction satisfies the desired properties in \eqref{e:wu1}
		\item 
		Note that since $B_U(x_D,3r_D)\cap B_U(x,r) \neq \emptyset$ and $r \ge \delta_U(x)$, we have 
		$$\frac{1+\epsilon}{\epsilon} r_D=	\delta_U(x_D) < 3 r_D+\delta_U(x)+r < 3 r_D + 2r,$$ which implies
		\[
		r_D \le \frac{2 \epsilon}{1-2\epsilon} r, \quad \mbox{ and } d(x_D,x) < 3r_D+r < \frac{4 \epsilon+1}{1-2\epsilon} r.
		\]
		Since $\gamma$ is a   $A$-uniform curve between $x$ and $x_D$, we have $$\gamma \subset \ol{B_U}(x,A d(x,x_D)) \subset B_U(x, \frac{A(4\epsilon+1)}{1-2\epsilon} r).$$
		Since $3B_j^D \cap \gamma \neq \emptyset$, we have 
		\begin{equation} \label{e:wu4}
		d(x_j^D,x) < 3 r^D_j + \frac{A(4\epsilon+1)}{1-2\epsilon} r,
		\end{equation}and hence $$\frac{1+\epsilon}{\epsilon} r_j^D= \delta_U(x_j^D) \le \delta_U(x)+d(x_j^D,x)\le r+3 r^D_j + \frac{A(4\epsilon+1)}{1-2\epsilon} r.$$ This implies 
		\[
		r_j^D \le \frac{(A(4 \epsilon +1) +1 -2 \epsilon)\epsilon}{(1-2\epsilon)^2} r,\, B_U(x_j^D,r_j^D) \subset  B_U(x, d(x,x_j^D)+r_j^D) \subset B_U(x,C_0r),
		\]
		where $$C_0=\frac{A(1+4 \epsilon)}{1-2\epsilon}+\frac{4(A(4 \epsilon +1) +1 -2 \epsilon)\epsilon}{(1-2\epsilon)^2}$$ by \eqref{e:wu4} and the upper bound on $r_j^D$ above. 
		This completes the proof of \eqref{e:wu2}.
		
		For the proof of \eqref{e:wu3}, we first show that the second inclusion follows from the first inequality. Note that
		\[
		\frac{1+\epsilon}{\epsilon} r_l^D = \delta_U(x_l^D) \le d(x_l^D,x_j^D)+\delta_U(x_j^D) \le \left(C_1+ \frac{1+\epsilon}{\epsilon}\right) r_j^D.
		\]
		Therefore $ r_l^D < (C_1+1)r_j^D$ and hence
		\[
		B_l^D=B_U(x_l^D,r_l^D) \subset B_U(x_j^D,  r_l^D + d(x_l^D,x_j^D)) \subset  B_U(x_j^D,  (C_1+1)r_j^D +C_1 r_j^D)).
		\]

		By (a), there exists $z_j \in B_U(x_j^D,3r_j^D) \cap \gamma$. 
		By  \eqref{e:incRB}, \eqref{e:wu4} and  \eqref{e:wu2}, we have
		\begin{align} \label{e:wu5}
			d(x_j^D,x_l^D) &\stackrel{\eqref{e:incRB}}{<} d(x_j^D,x) +2r  \stackrel{\eqref{e:wu4}}{<} 2r+ 3r_j^D + \frac{A(4 \epsilon +1)}{(1-2\epsilon)} r \nonumber \\
			&\stackrel{\eqref{e:wu2}}{<} r \left(2+ \frac{3(A(4 \epsilon +1) +1 -2 \epsilon)\epsilon}{(1-2\epsilon)^2}+ \frac{A(4 \epsilon +1)}{(1-2\epsilon)} \right).
		\end{align}
		We consider two cases whether or not $z_j \in B_U(x_0^D,3r_0^D)$.
		If  $$z_j \in B_U(x_0^D,3r_0^D) \cap  B_U(x_j^D,3r_j^D) \cap \gamma,$$ then by Proposition \ref{p:whitney}(c) we obtain
		\begin{equation}\label{e:wu6}
		r_j^D \stackrel{\eqref{e:whitc}}{\ge} \frac{1-2\epsilon}{1+4\epsilon} r_0^D \overset{\eqref{e:bndr0}}{\ge} \frac{(1-2\epsilon)\epsilon}{3A (4+\epsilon)(1+4\epsilon)} r.
		\end{equation}
		Combining \eqref{e:wu5} and \eqref{e:wu6}, we obtain \eqref{e:wu3} in this case.
		
		If $z_j \notin B_U(x_0^D,3r_0^D)$, by Proposition \ref{p:whitney}(b) and the weak $A$-uniformity of $\gamma$, we have 
		\begin{align} \label{e:wu7}
			\frac{1+4 \epsilon}{\epsilon} r_j^D=\frac{1+4\epsilon}{1+\epsilon}\delta_U(x_j^D) &\overset
			{\eqref{e:whitb}}{>} \delta_U(z_j) \ge \frac{1}{A} \min(d(x_0^D,z_j),d(x_l^D,z_j)) \nonumber\\
			&\ge \frac{1}{A} \min(3 r_0^D,d(x_l^D,z_j))\q \mbox{(since $z_j \notin B_U(x_0^D,3 r_0^D)$)} \nonumber \\
			&\ge \frac{1}{A} \min(3 r_0^D,d(x_l^D,x_j^D)-3 r_j^D)  \\
			&\qquad  \qquad \nonumber \mbox{ (since $z_j \in B_U(x_j^D,3 r_j^D)$).}
		\end{align}
		We divide this case into two subcases depending on whether or not $3 r_0^D \le d(x_l^D,x_j^D)-3 r_j^D$. If $3 r_0^D \le d(x_l^D,x_j^D)-3 r_j^D$, we have 
		\begin{equation} \label{e:wu8}
		r_j^D \stackrel{\eqref{e:wu7}}{\ge} \frac{3\epsilon}{A(1+4\epsilon)}  r_0 \stackrel{\eqref{e:bndr0}}{\ge}\frac{\epsilon^2}{A^2(4+\epsilon)(1+4\epsilon)}   r.
		\end{equation}
		In this subcase, we obtain the desired conclusion by \eqref{e:wu8} and \eqref{e:wu5}.
		
		Finally, if $3 r_0^D > d(x_l^D,x_j^D)-3 r_j^D$, we have 
		\begin{equation} \label{e:wu9}
			d(x_l^D,x_j^D) \stackrel{\eqref{e:wu7}}{\le} \left(\frac{A(1+4\epsilon)}{\epsilon}+3\right) r_j^D
		\end{equation}
		which concludes the proof of \eqref{e:wu3}, as it follows from \eqref{e:wu9} along with \eqref{e:wu8} and \eqref{e:wu6}. \qedhere
		
		%
		%if we choose $C_1$ as the maximum of $\left(\frac{A(1+4\epsilon)}{\epsilon}+3\right)$ and
		%\[
		% \left(2+ \frac{3(A(4 \epsilon +1) +1 -2 \epsilon)\epsilon}{(1-2\epsilon)^2}+ \frac{A(4 \epsilon +1)}{(1-2\epsilon)} \right) \left( \frac{3A (4+\epsilon)(1+4\epsilon)}{(1-2\epsilon)\epsilon} \vee \frac{A^2(4+\epsilon)(1+4\epsilon)}{\epsilon^2} \right).
		%\]
		
	\end{enumerate}

\end{proof}

For any ball $B_U(x,r)$ with $r \ge \delta_U(x)$,  define
\begin{equation} \label{e:defR1}
	\mathfrak{R}_1(B_U(x,r)):= \bigcup_{D \in \mathfrak{R}(B_U(x,r))} \bS(D)= \{ B \in \mathfrak{R}: B \in \bS(D)  \mbox{ and   $D \in \mathfrak{R}(B_U(x,r))$} \},
\end{equation}
where $\bS(D)$ is given in Lemma \ref{l:string}. 
\begin{remark} \label{r:central1}
	We note that the assumption that $B_0= B_U(x_0,r_0)\in \mathfrak{R}(B_U(x,r))$ is central can be replaced with the condition that $r_0 \ge c_0 r$ for some $c_0 >0$. In the case, the constants in the conclusion will also depend on $c_0$ and the proof can be modified by replacing the use of \eqref{e:bndr0} with the estimate $r_0 \ge c_0 r$.
\end{remark}
\subsection{Reflection of Whitney balls \`a la Jones}
Let $U \subsetneq X$ be a non-empty   $A$-uniform domain and let $\mathfrak{R}$ be an $\epsilon$-Whitney cover of $U$, where $\epsilon \in (0,1/14)$. Let $V:=(U^c)^\circ$ denote the interior of $U^c$. We record a simple topological fact. 
\begin{lemma} \label{l:top}
	Let $U$ be a non-empty open set and let $V= (U^c)^\circ$. Then 
	$\partial V \subseteq \partial U$.
\end{lemma}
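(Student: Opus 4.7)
The plan is to unpack the definitions of boundary and interior and chase inclusions. Both $U$ and $V$ are open by hypothesis (the interior of any set is open), so $\partial U = \overline{U} \setminus U$ and $\partial V = \overline{V} \setminus V$. Given $x \in \partial V$, I need to show $x \in \overline{U}$ and $x \notin U$.

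For the first containment, I would use that $V = (U^c)^\circ \subseteq U^c$, so $\overline{V} \subseteq \overline{U^c} = U^c$ (the last equality because $U$ is open, hence $U^c$ is closed). Therefore $x \in U^c$, which already gives $x \notin U$. For the second containment, since $x \in \partial V$ but $x \notin V$, the point $x$ lies in $U^c$ but is \emph{not} an interior point of $U^c$; hence every open neighborhood of $x$ meets $U$, i.e., $x \in \overline{U}$. Combining these two facts yields $x \in \overline{U} \setminus U = \partial U$, completing the argument.

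I do not expect any real obstacle: the only subtlety is being careful that the boundary is taken with respect to the ambient space $X$ (as opposed to a relative boundary), and that $U$ being open is what makes $\overline{U^c} = U^c$ and $\partial U = \overline{U} \setminus U$. Neither requires any property of uniform domains or of Whitney covers; the lemma is purely topological.
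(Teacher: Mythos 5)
Your proof is correct and follows essentially the same route as the paper: both arguments hinge on the inclusion $\overline{V} \subseteq U^c$ and on the fact that $x \notin (U^c)^\circ$ forces $x \in \overline{U}$ (which the paper phrases compactly as the identity $((U^c)^\circ)^c = \overline{U}$, i.e.\ $V^c = \overline{U}$, and then writes $\partial V = \overline{V} \cap V^c \subseteq U^c \cap \overline{U} = \partial U$). The bookkeeping differs slightly but the content is the same.
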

\begin{proof}
	Note that $\overline{V} \subset U^c$ (since $U^c$ is closed and contains $V$) and $V^c=  \left( (U^c)^\circ \right)^c = \ol{ \left(U^c\right)^c}= \ol{U}$ (since $(A^\circ)^c= \ol{A^c}$). Therefore
	\[
	\partial V= \overline V \cap V^c \subseteq U^c \cap \ol{U}= \partial U. \qedhere
	\]
\end{proof}

If $V \neq \emptyset$, let $\mathfrak{S}$ be a $\epsilon$-Whitney cover of $V$.
Let 
\begin{equation} \label{e:defStil}
	\wt {\mathfrak{S}}= \left\{ B_V(y,s) \in \mathfrak{S}: s < \frac{\epsilon}{6A(1+\epsilon)}\diam(U) \right\}.
\end{equation}
In particular $\wt {\mathfrak{S}}= {\mathfrak{S}}$, if $\diam(U)=\infty$.
Next, we define a `reflection' map $Q:\wt {\mathfrak{S}} \to \mathfrak{R}$ that maps a ball  from  $\wt {\mathfrak{S}}$ to a ball in the Whitney cover $\mathfrak{R}$ that is similar to a construction of Jones \cite{Jon80,Jon81} motivated by quasiconformal reflection.
The following proposition is a modification of \cite[Lemmas 2.4, 2.5, 2.6, 2.7 and 2.8]{Jon81}. 
\begin{proposition} \label{p:reflect} Let $(X,d)$ be a doubling metric space and let $\eps\in (0,1/5)$. Let $U$ be a   $A$-uniform domain with $V= (U^c)^\circ \neq \emptyset$. Let $\mathfrak{R},\mathfrak{S}$ denote the $\epsilon$-Whitney covers of $U,V$ respectively and let $\wt{\mathfrak{S}}$ be as defined in \eqref{e:defStil}.
	Then there exists a map $Q: \wt {\mathfrak{S}} \to \mathfrak{R}$ such that the following properties hold:
	\begin{enumerate}[(a)]
		\item For any $B=B_{V}(y,s) \in  \wt {\mathfrak{S}}$, the ball $Q(B)= B_U(x,r)$ satisfies 
		\begin{equation}\label{e:prefa}
		\frac{1+\eps}{1+4 \eps}s < r <    \frac{1+\eps}{1-2 \eps}s, \quad d(x,y) \le \left(2+ \frac{3A}{2}\right) \frac{1+\eps}{\eps} s.
		\end{equation}
		\item  There exists $K \in \bN$ which depends only on $\eps, A$ and the doubling constant of $(X,d)$ such that the map $Q$ is at most $K$ to $1$; that is 
		\[
		\sup \{ \# Q^{-1}(B): B \in \mathfrak{R} \} \le K.
		\]
		\item If $B_V(y_i,s_i) \in \wt{ \mathfrak{S}}, i=1,2$ satisfy $B_V(y_1,6s_1)  \cap B_V(y_2,6s_2) \neq \emptyset$, then 
		$B_U(x_i,r_i) = Q (B_V(y_i,s_i)), i =1,2$ satisfy
		\begin{equation}\label{e:rcomp}
		\frac{(1-2\eps)(1-5 \eps)}{(1+4 \eps)(1+7 \eps)} r_2 \le	r_1 \le   \frac{(1+4 \eps)(1+7 \eps)}{(1-2\eps)(1-5 \eps)} r_2.
		\end{equation}
		Furthermore, there is a chain of distinct balls $\{B_U(z_i,t_i) \in \mathfrak{R}: 1 \le i \le N \}$ such that $z_1=x_1, t_1= r_1, z_N=x_2, t_N= r_2$, $B_U(z_i, 3t_i) \cap B_U(z_{i+1},3t_{i+1}) \neq \emptyset$  for all $i=1,\ldots,N-1$, and $N$ satisfies the bound
		\[
		N \le  N_0,
		\]
		where $N_0$ depends only on $\eps, A$ and the doubling constant of $(X,d)$.
	\end{enumerate}
	
\end{proposition}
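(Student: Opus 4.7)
The plan is to construct $Q$ as a ``scale-matched reflection'' across $\partial U$. Given $B = B_V(y,s) \in \wt{\mathfrak{S}}$, set $D := \delta_V(y) = (1+\eps)s/\eps$. By Lemma \ref{l:top}, $V \subseteq X \setminus \ol U$, so I fix $p \in \partial U$ with $d(y,p)$ arbitrarily close to $D$. The defining inequality of $\wt{\mathfrak{S}}$ is equivalent to $3AD < \diam(U)/2$, so Lemma \ref{l:corkscrew} applied at $p$ with radius $3AD$ yields $z \in U \cap B(p, 3AD)$ with $\delta_U(z) \ge D$. Since $U$ accumulates at $p$, there is also $z' \in U$ with $\delta_U(z') < D$; joining $z$ to $z'$ by an $A$-uniform curve $\gamma$ in $U$ and applying the intermediate value theorem to the $1$-Lipschitz function $\delta_U \circ \gamma$ produces $x^* \in \gamma \subset U$ with $\delta_U(x^*) = D$ exactly. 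I then pick any $B_U(x,r) \in \mathfrak{R}$ with $x^* \in B_U(x, 3r)$ (available by Definition \ref{d:whitney}(iii)) and set $Q(B) := B_U(x,r)$.

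Part (a) follows by applying Proposition \ref{p:whitney}(b) to $x^* \in 3B_U(x,r)$ with $\delta_U(x^*) = D$: the inequalities $\frac{1-2\eps}{1+\eps}\delta_U(x) < D < \frac{1+4\eps}{1+\eps}\delta_U(x)$, combined with $r = \eps\delta_U(x)/(1+\eps)$ and $D = (1+\eps)s/\eps$, yield exactly the two-sided radius bound in \eqref{e:prefa}. The distance bound on $d(x,y)$ comes from $d(x,y) \le 3r + d(x^*,p) + d(p,y) \le 3r + \diam(\gamma) + D$: choosing $z'$ as a second, smaller corkscrew point close to $p$ makes $d(z,z')$ and hence $\diam(\gamma)\le A\,d(z,z')$ of the required order. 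Obtaining the explicit constant $2+3A/2$ requires optimizing the corkscrew radii, but a weaker constant of the form $C(A,\eps)$ is immediate.

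For (b), suppose $B_V(y_i, s_i) \in Q^{-1}(B_U(x,r))$ for $i=1,\ldots,n$. Part (a) gives $s_i \asymp r$ and $d(x,y_i) \lesssim s_i/\eps$; the balls $B_V(y_i,s_i)$ are pairwise disjoint of radii $\asymp r$ with centers in a ball of radius $\lesssim r/\eps$ about $x$, so metric doubling (Lemma \ref{l:vd}) bounds $n$ by a constant depending only on $A,\eps$ and the doubling constant. For (c), the hypothesis $6B_V(y_1,s_1)\cap 6B_V(y_2,s_2) \neq \emptyset$ together with Proposition \ref{p:whitney}(c) for $\lambda=6$ (permitted because $\eps<1/5$) gives $\frac{1-5\eps}{1+7\eps}s_2 \le s_1 \le \frac{1+7\eps}{1-5\eps}s_2$, and inserting the bounds from (a) for $r_i$ in terms of $s_i$ yields \eqref{e:rcomp}. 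To produce the chain, the triangle inequality through $y_i, p_i, x_i^*$ with the estimates of (a) gives $d(x_1,x_2) \le C(A,\eps)r_1$. Taking an $A$-uniform curve $\gamma'$ in $U$ from $x_1$ to $x_2$, the property $\delta_U(w) \ge A^{-1}\min(d(x_1,w),d(x_2,w))$ combined with $\delta_U(x_i) = (1+\eps)r_i/\eps$ forces $\delta_U \gtrsim r_1$ along $\gamma'$, so by Proposition \ref{p:whitney}(b) every Whitney ball of $\mathfrak{R}$ meeting $\gamma'$ has radius $\asymp r_1$. Cover $\gamma'$ by the enlarged Whitney balls (Definition \ref{d:whitney}(iii)) and run the greedy chaining argument from the proof of Lemma \ref{l:string}(a) to extract a chain from $Q(B_1)$ to $Q(B_2)$ whose triples $3B_U(z_i,t_i)$ overlap consecutively. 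All centers of chain members lie in a ball of radius $\lesssim r_1$ about $x_1$, and doubling bounds the chain length by a constant $N_0$ depending only on $A,\eps$ and the doubling constant.

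The main obstacle is the construction of $x^*$ with $\delta_U(x^*) = D$ \emph{exactly}: any approximation $\delta_U(x^*) \in [D, CD]$ would only yield a weaker upper bound $r \lesssim_{A}s$, destroying the tight form of \eqref{e:prefa}. This forces an intermediate value argument along a path in $U$, which requires both the $A$-uniformity of $U$ (supplying the path) and the $1$-Lipschitz property of $\delta_U$ (supplying continuity even in our non-length setting). A secondary technicality is matching the explicit linear-in-$A$ distance constant in (a): the naive uniform-curve estimate through a large corkscrew point gives an $O(A^2)$ bound, and the stated $2 + 3A/2$ requires a more careful two-point construction, though any constant depending on $A,\eps$ suffices for the rest of the paper.
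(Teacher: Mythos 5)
Your high-level strategy mirrors the paper's: manufacture a point at depth exactly $D:=(1+\eps)s/\eps$ in $U$ by an intermediate value argument along an $A$-uniform curve, take a Whitney ball of $\mathfrak{R}$ whose triple contains it, and then control multiplicities and chain lengths via metric doubling. Your parts (b) and (c) essentially coincide with the paper's argument, and the two-sided radius bound in (a) is derived correctly from Proposition \ref{p:whitney}(b). The substantive shortfall is the distance bound in (a). Starting from a corkscrew point $z$ with $\delta_U(z)\ge D$ inside $B(p,3AD)$ gives only $d(p,x^*)\lesssim\diam(\gamma)\le A\,d(z,z')\lesssim A^2D$, so you cannot recover the stated $2+\tfrac{3A}{2}$; you flag this yourself.

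The ``more careful two-point construction'' you allude to is precisely what the paper does, and it has two ingredients your version omits. The paper picks $z_1\in U$ within $D/2$ of the nearest boundary point $z\in\partial V\subset\partial U$, so $\delta_U(z_1)\le d(z_1,z)<D$ automatically (no corkscrew is needed on that side), and a second point $z_2\in U$ with $d(z,z_2)\ge\tfrac{5A}{2}D$. On the $A$-uniform curve $\gamma$ from $z_1$ to $z_2$, take $z_4$ to be the \emph{first} point at distance $AD$ from $z_1$. Because $z_2$ was chosen far enough, $d(z_1,z_2)\ge\tfrac{5A-1}{2}D\ge 2AD$ and hence $d(z_2,z_4)\ge AD$ as well, so $\delta_U(z_4)\ge A^{-1}\min\bigl(d(z_1,z_4),d(z_2,z_4)\bigr)=D$. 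The intermediate value theorem on the initial arc up to $z_4$ then yields a first $z_3$ with $\delta_U(z_3)=D$, and since $z_3$ precedes $z_4$, $d(z_1,z_3)\le AD$. Therefore $d(y,z_3)\le d(y,z)+d(z,z_1)+d(z_1,z_3)\le D+\tfrac{D}{2}+AD$, which (after absorbing the $3r$ term) gives the linear-in-$A$ constant. The two missing ideas are the far endpoint $z_2$ (chosen so the $\min$ in the uniform-curve lower bound is always governed by $d(z_1,\cdot)$) and tracking the \emph{first} point at distance $AD$. Since the weaker constant $C(A,\eps)$ you obtain does suffice for every downstream use of this proposition in the paper, this is a gap with respect to the statement as written, not a flaw of strategy. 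Two minor precision issues: $\delta_U\circ\gamma$ is not $1$-Lipschitz in the curve parameter in general (continuity is all the intermediate value argument needs, and that does hold); and $p$ can be chosen with $d(y,p)=D$ exactly rather than ``arbitrarily close,'' since $B(y,D)\subset V$ forces any minimizer $p\in V^c$ to lie in $\overline{V}\cap V^c=\partial V\subseteq\partial U$.
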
 
\begin{proof}
	For any $B_{V}(y,s) \in \wt{\mathfrak{S}}$, choose $z \in \partial V \subset \partial U$ (by Lemma \ref{l:top}) such that $\frac{1+\eps}{\eps}s=\delta_{V}(y)= d(y,z)$. Choose  points $z_1,z_2 \in U$ such that $$d(z,z_2) \ge \frac{5A s}{2} \frac{1+\eps}{\eps} \ge \frac{5 }{12}\diam(U), \mbox{ and }  d(z,z_1) \le \frac{(1+\eps)s}{2 \eps}.$$ By considering a   $A$-uniform curve $\gamma$ in $U$ from $z_1$ to $z_2$, we pick $z_3 \in \gamma$ to be the first point along the curve from $z_1$ to $z_3$  such that $\delta_U(z_3) = \frac{1+\eps}{\eps} s$. We claim  that such a point $z_3$ exists and satisfies
	\begin{equation}\label{e:db}
		d(z_1,z_3) \le \frac{A(1+\eps)}{\eps} s. 
	\end{equation}  To see this, note that if $z_4$ is the first point along $\gamma$ so that $d(z_1,z_4) = \frac{A(1+\eps)}{\eps} s$, then $$\delta_U(z_4) \ge  A^{-1}\min(d(z_1,z_4),d(z_2,z_1)-d(z_1,z_4)) = \frac{1+\epsilon}{\epsilon}s.$$ Therefore $d(z_1,z_3) \le d(z_1,z_4) \le \frac{A(1+\eps)}{\eps} s$.

	By Definition \ref{d:whitney}(iii), there exists $B_U(x,r) \in \mathfrak{R}$ such that $z_3 \in B(x, 3r)$. For each $B_V(y,s) \in \wt{\mathfrak{S}}$, we set $B_U(x,r) \in \mathfrak{R}$  as $Q(B_V(y,s))$. This defines a map $Q: \wt {\mathfrak{S}} \to \mathfrak{R}$. We will now verify that it satisfies the desired properties (a)-(c).
	\begin{enumerate}[(a)]
		\item 
		By Proposition \ref{p:whitney}(b), we have % $ \frac{1-2 \eps}{1+\eps} \delta_U(x) < \delta_U(z_3) <  \frac{1+4 \eps}{1+\eps} \delta_U(x)$ or equivalently
		\begin{equation}\label{e:rsbnd}
		\frac{1+\eps}{1+4 \eps}s =\frac{\eps}{1+4 \eps} \delta_U(z_3)< r < \frac{\eps}{1-2 \eps} \delta_U(z_3)=    \frac{1+\eps}{1-2 \eps}s.
		\end{equation}
		Furthermore, by the choice of $z,z_1,z_3$ and \eqref{e:db}, we have
		\begin{align} \label{e:dxybnd}
			d(x,y) &< d(y,z)+ d(z,z_1)+d(z_1,z_3) + d(z_3,x) \nonumber \\
			&\le \frac{1+\eps}{\eps} s + \frac{A(1+\eps)}{2 \eps} s+  \frac{A(1+\eps)}{ \eps} s+ 3r   \nonumber\\
			& \le \left(2+ \frac{3A}{2}\right) \frac{1+\eps}{\eps} s \q \mbox{(by \eqref{e:rsbnd} and $\eps<1/5$).}
		\end{align}
		The property (a) follows form \eqref{e:rsbnd} and \eqref{e:dxybnd}.
		\item  Let $B_V(y_1,s_1), B_V(y_2,s_2)$ be two distinct balls in $\wt{\mathfrak{S}}$ such that $Q(B_V(y_1,s_1))=Q(B_V(y_2,s_2))= B_U(x,r) \in \mathfrak{R}$. Then
		\begin{equation}\label{e:kto1-1}
		d(y_i,x) \stackrel{\eqref{e:dxybnd}}{\le} \left(2+ \frac{3A}{2}\right) \frac{1+\eps}{\eps} s_i \stackrel{\eqref{e:rsbnd}}{\le} \left(2+ \frac{3A}{2}\right) \frac{1+4\eps}{\eps} r, \q \mbox{for $i=1,2$.}
		\end{equation}
		Since $B_V(y_1,s_1) \cap B_V(y_2,s_2) = \emptyset$, we have
		\begin{equation}\label{e:kto1-2}
		d(y_1,y_2) \ge s_1 \stackrel{\eqref{e:rsbnd}}{\ge} \frac{1-2\eps}{1+\eps}r.
		\end{equation}
		By \eqref{e:kto1-1}, \eqref{e:kto1-2} and   \cite[Exercise 10.17]{Hei}, there exists $K \in \bN$ that depends only on $\eps, A, K$ such that $Q$ is at most $K$ to $1$.
		\item The estimate \eqref{e:rcomp} is an immediate consequence of \eqref{e:rsbnd} and Proposition \ref{p:whitney}(c).
		By triangle inequality $d(x_1,x_2) \le d(x_1,y_1) + d(y_1,y_2) + d(x_2,y_2)$, $d(y_1,y_2) \le 6(s_1+s_2)$ (since $B_V(y_1,6s_1)  \cap B_V(y_2,6s_2) \neq \emptyset$),  \eqref{e:dxybnd}, and Proposition \ref{p:whitney}(c), we obtain
		\begin{equation}\label{e:ch1}
		d(x_1,x_2) <  \left[ 6+ \left(2+ \frac{3A}{2}\right) \frac{1+\eps}{\eps} \right](s_1+s_2) \le \left[6+ \left(2+ \frac{3A}{2}\right) \frac{1+\eps}{\eps} \right] \frac{2(1+\eps)}{1-5\eps}s_1.
		\end{equation}
		If $Q (B_V(y_1,s_1))= Q (B_V(y_2,s_2))$, we choose the obvious chain with $N=1$ ball. 
		Otherwise, we connect $x_1$ and $x_2$ with a   $A$-uniform curve $\wt \gamma$, whose diameter $\diam (\wt \gamma)$ satisfies the bound
		\begin{equation}\nonumber
		r_1 \le d(x_1,x_2) \le \diam(\wt \gamma) \le A d(x_1,x_2) \stackrel{\eqref{e:ch1}}{\le} \left[ 6+ \left(2+ \frac{3A}{2}\right) \frac{1+\eps}{\eps} \right] \frac{2A(1+\eps)}{1-5\eps}s_1.
		\end{equation}
		This along with \eqref{e:rsbnd} yields
		\begin{equation}\label{e:ch2}
		\frac{1+\eps}{1+4\eps} s_1 \le \diam (\wt \gamma)\le \left[ 6+ \left(2+ \frac{3A}{2}\right) \frac{1+\eps}{\eps} \right] \frac{2A(1+\eps)}{1-5\eps}s_1.
		\end{equation}
		For any $z \in \wt \gamma$, we claim that 
		\begin{equation}\label{e:ch3}
		\delta_U(z) \ge    \frac{1}{A} (r_1 \wedge r_2).
		\end{equation}
		The proof of \eqref{e:ch3} is divided into two cases depending on whether or not $z \in B_U(x_1,r_1) \cup B_U(x_2,r_2)$. If $z \in B_U(x_i, r_i)$ for some $i=1,2$, then 
		\[
		\delta_U(z) > \delta_U(x_i)-r_i= \frac{1+\eps}{\eps} r_i - r_i = \eps^{-1} r_i \ge  (r_1 \wedge r_2).
		\]
		On the other hand, if  $z \in B_U(x_1,r_1)^c \cap  B_U(x_1,r_1)^c \cap \wt \gamma$, then by the weak $A$-uniformity of $\wt \gamma$, we have
		\[
		\delta_U(z) \ge \frac{1}{A} \min(d(x_1,z), d(x_2,z)) \ge \frac{1}{A}  (r_1 \wedge r_2)
		\]
		which completes the proof of \eqref{e:ch3}. By \eqref{e:ch1}, \eqref{e:rcomp}, and \eqref{e:rsbnd}, there exists $C_1$ which depends only on $\eps,A$ such that 
		\begin{equation} \label{e:ch4}
		\delta_U(z) \le C_1 (r_1 \wedge r_2).
		\end{equation}

		By the same argument as in Lemma \ref{l:string}(a), we construct    a chain of distinct balls $\{B_U(z_i,t_i) \in \mathfrak{R}: 1 \le i \le N \}$ such that \[  z_1=x_1, t_1= r_1, z_N=x_2, t_N= r_2, \quad  B_U(z_i, 3t_i) \cap B_U(z_{i+1},3t_{i+1}) \neq \emptyset  \]  for all $i=1,\ldots,N-1$ and $\wt \gamma \cap  B_U(z_i, 3t_i) \neq \emptyset$ for all $i=1,\ldots,N$. 
		It remains to obtain an  upper bound on $N$ that   depends only on $\eps, A$ and the doubling constant of $(X,d)$.
		By Proposition \ref{p:whitney}(b), \eqref{e:ch3}, and \eqref{e:ch4}, we obtain
		\begin{equation}\label{e:ch5}
		\frac{C_1(1+\eps)}{1-2\eps} (r_1 \wedge r_2) \stackrel{\eqref{e:whitb}, \eqref{e:ch4}}{>}  \frac{1+\eps}{\eps}t_i = \delta_U(z_i) \stackrel{\eqref{e:whitb}, \eqref{e:ch3}}{>} \frac{1+\eps}{A(1+4 \eps)} (r_1 \wedge r_2).
		\end{equation}
		By \eqref{e:ch2}, \eqref{e:rsbnd}, \eqref{e:rcomp},  \eqref{e:ch4}, and \eqref{e:ch5},  there exists $C_2$ that depends only on $\eps, A$ such that
		\begin{equation} \label{e:ch6}
		d(x_1,z_i) \le \diam(\wt \gamma) + 3 \max_{1\le  i \le N} t_i \le C_2(r_1 \wedge r_2), \q \mbox{for all $i=1,\ldots,N$.}
		\end{equation}
		Since the balls $B_U(z_i,t_i) \in \mathfrak{R}, 1 \le i \le N$ are pairwise disjoint, by \eqref{e:ch5} the points $\{z_i: 1 \le i \le N\}$ have mutual distance of at least $$\frac{\eps}{A(1+4\eps)} (r_1 \wedge r_2).$$ Combining this with \eqref{e:ch6} and   \cite[Exercise 10.17]{Hei}, we obtain the desired bound on $N$. \qedhere
	\end{enumerate}
\end{proof}

We record a few more geometric properties of the Whitney covers $\mathfrak{R}, \mathfrak{S}$  that will be used.
We define graphs $G_{\mathfrak{R}}, G_{\mathfrak{S}}$ with vertices  $\mathfrak{R}, \mathfrak{S}$ respectively below.
\begin{definition} \label{d:graph}
	Let $G_{\mathfrak{R}}$ denote the graph (undirected) whose vertex set is $\mathfrak{R}$ and such that $B_U(x_1,r_1), B_U(x_2,r_2)\in \mathfrak{R}$ are connected by an edge if $B_U(x_1,3r_1) \cap B_U(x_2, 3r_2) \neq \emptyset$ and $B_U(x_1,r_1) \neq B_U(x_2,r_2)$. In this case, we denote it by $B_U(x_1,r_1) \stackrel{\mathfrak{R}}{\sim}B_U(x_2,r_2)$.
	
	Similarly, we 	define $G_{\mathfrak{S}}$ to be the graph whose vertex set is $\mathfrak{S}$ and such that $B_V(y_1,s_1), B_V(y_2,s_2)\in \mathfrak{S}$ are connected by an edge if $B_V(y_1,6s_1) \cap B_V(y_2, 6r_2) \neq \emptyset$ and $B_V(y_1,s_1) \neq B_V(y_2,s_2)$. In this case, we denote it by $B_V(y_1,s_1) \stackrel{\mathfrak{S}}{\sim}B_V(y_2,s_2)$.
	
	Let $D_{\mathfrak{R}}, D_{\mathfrak{S}}$ denote the (combinatorial) graph distance defined on $\mathfrak{R}, \mathfrak{S}$  induced by the graphs  $G_{\mathfrak{R}}, G_{\mathfrak{S}}$ respectively.
\end{definition}
For $B \subset X$, we define 
\begin{equation} \label{e:defSB}
	\mathfrak{S}(B)= \{B_V(y,s) \in \mathfrak{S} : B_V(y,6s) \cap B \neq \emptyset \}, \quad \wt{\mathfrak{S}}(B)= \mathfrak{S}(B) \cap \wt{\mathfrak{S}}. 
\end{equation}
This definition is a slight variation of  \eqref{e:defRB} as the constant $3$ is replaced by $6$ in the current definition.
\begin{lemma} \label{l:reflect}
	Let $(X,d)$, $U,V,\mathfrak{R},\mathfrak{S}, Q: \wt{\mathfrak{S}} \to \mathfrak{R}$, $\eps \in (0,1/5)$ be as given in Proposition \ref{p:reflect}.  Let $G_{\mathfrak{R}},G_{\mathfrak{S}}$ denote the graphs as given in Definition \ref{d:graph}. 
	\begin{enumerate}[(a)]
		\item For any $\xi \in \partial U, r >0$ and $B_V(y, s) \in \mathfrak{S}(B(\xi,r))$, we have
		\begin{equation} \label{e:srbnd}
			s < \frac{\epsilon}{1-5 \epsilon} r.
		\end{equation}
		In particular, if $r \le \frac{1-5\eps}{6A(1+\eps)} \diam(U)$ then $B_V(y, s) \in \wt{\mathfrak{S}}$.
		
		Similarly, if  $\xi \in \partial U, r >0$ and $B_U(x_0, r_0) \in \mathfrak{R}(B(\xi,r))$, we have
		\begin{equation} \label{e:srbnd1}
			r_0 < \frac{\epsilon}{1-2 \epsilon} r.
		\end{equation}
		\item $G_{\mathfrak{R}}, G_{\mathfrak{S}}$ are bounded degree graphs.
		\item The map $Q: \wt{\mathfrak{S}} \to \mathfrak{R}$ is Lipschitz with respect to the distances $D_{\mathfrak{R}}, D_{\mathfrak{S}}$ on $\mathfrak{R}, \wt{\mathfrak{S}}$ respectively.
		\item There exists $K_0 \in (0,\infty)$ depending only on $A, \epsilon$ such that for any $\xi \in \partial U, r>0$ and for any $B \in \wt{\mathfrak{S}}(B_V(\xi,r))$, we have $Q(B) \in \mathfrak{R}(B_U(\xi,K_0 r))$. 
		\item For any $L$, there exists $K$ such that for any 
		$B_1 \in \mathfrak{R}(B_U(\xi,r)), B_2 \in \mathfrak{R}$ such that $D_{\mathfrak{R}}(B_1,B_2) \le L$, we have $B_2 \in \mathfrak{R}(B_U(\xi, K r))$. 
	\end{enumerate}	
\end{lemma}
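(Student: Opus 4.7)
The plan is to dispatch each part (a)--(e) using the radius comparison and bounded overlap properties of Proposition \ref{p:whitney} together with the reflection construction from Proposition \ref{p:reflect}.

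For (a), I first observe that $\xi \in \partial U$ lies outside both $V$ and $U$: since $V = (U^c)^\circ$ is open with $V \cap U = \emptyset$ while every neighborhood of $\xi$ meets $U$, one has $\xi \notin V$, and trivially $\xi \notin U$ since $U$ is open. Consequently $\delta_V(y) \le d(y,\xi)$ and $\delta_U(x_0) \le d(x_0, \xi)$. Choosing $z \in B_V(y,6s) \cap B(\xi,r)$ (resp.\ $z \in B_U(x_0, 3r_0) \cap B(\xi,r)$), the defining identities $s = \tfrac{\epsilon}{1+\epsilon}\delta_V(y)$ and $r_0 = \tfrac{\epsilon}{1+\epsilon}\delta_U(x_0)$ combined with $\delta_V(y) \le d(y,z) + d(z,\xi) < 6s + r$ (resp.\ $\delta_U(x_0) < 3r_0 + r$) rearrange to \eqref{e:srbnd} and \eqref{e:srbnd1}. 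The ``in particular'' statement then follows by inserting the hypothesized upper bound on $r$ into \eqref{e:srbnd} and comparing with the definition \eqref{e:defStil} of $\wt{\mathfrak{S}}$.

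For (b), Proposition \ref{p:whitney}(e) applied to $\mathfrak{R}$ and $\mathfrak{S}$ (both are $\epsilon$-Whitney covers in a doubling metric space with $\epsilon \in (0,1/5)$) gives a uniform bound on the number of Whitney balls whose $6$-inflation meets a fixed one; this implies bounded degree of $G_{\mathfrak{S}}$ directly and of $G_{\mathfrak{R}}$ a fortiori (since $3$-inflations meeting implies $6$-inflations meeting). For (c), it is enough to check the Lipschitz bound on edges of $G_{\mathfrak{S}}$ whose endpoints lie in $\wt{\mathfrak{S}}$: Proposition \ref{p:reflect}(c) says such an edge produces under $Q$ a chain of at most $N_0$ balls in $\mathfrak{R}$ with consecutive $3$-inflations meeting, i.e., a path of length $\le N_0 - 1$ in $G_{\mathfrak{R}}$; concatenating along a geodesic in $G_{\mathfrak{S}}$ then yields the Lipschitz constant $N_0 - 1$.

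For (d), given $B = B_V(y,s) \in \wt{\mathfrak{S}}(B_V(\xi,r))$ with $Q(B) = B_U(x,r')$, part (a) yields $s < \tfrac{\epsilon}{1-5\epsilon} r$, while \eqref{e:prefa} gives $d(x,y) \le (2 + \tfrac{3A}{2})\tfrac{1+\epsilon}{\epsilon}s$. Choosing $z \in B_V(y,6s) \cap B_V(\xi,r)$ and applying the triangle inequality $d(x,\xi) \le d(x,y) + d(y,z) + d(z,\xi) < d(x,y) + 6s + r$ yields $d(x,\xi) \le C(A,\epsilon) \cdot r$. Since $x \in U$, selecting $K_0 > C(A,\epsilon)$ forces $x \in B_U(\xi, K_0 r) \cap B_U(x, 3r')$, whence $Q(B) \in \mathfrak{R}(B_U(\xi, K_0 r))$. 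For (e), if $B_1 = B_U(x_1,r_1) \in \mathfrak{R}(B_U(\xi,r))$, part (a) gives $r_1 < \tfrac{\epsilon}{1-2\epsilon} r$ and $d(x_1, \xi) < 3r_1 + r \lesssim_\epsilon r$; iterating \eqref{e:whitc} with $\lambda = 3$ along a chain of length $\le L$ bounds all intermediate radii by $\big(\tfrac{1+4\epsilon}{1-2\epsilon}\big)^L r_1$, and summing \eqref{e:distnei} over the chain bounds $d(x_1, x_2) \lesssim_{A,\epsilon,L} r_1$. Thus $d(x_2,\xi) \lesssim_{A,\epsilon,L} r$, and choosing $K$ larger than this implicit constant places $x_2 \in B_U(\xi, Kr)$, giving $B_2 \in \mathfrak{R}(B_U(\xi, Kr))$.

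The main technical subtlety is in (c): a geodesic in $G_{\mathfrak{S}}$ between two balls of $\wt{\mathfrak{S}}$ might transit intermediate balls outside $\wt{\mathfrak{S}}$, where $Q$ is undefined. However, Proposition \ref{p:whitney}(c) applied with $\lambda = 6$ shows that radii along such a geodesic vary only by the bounded multiplicative factor $\tfrac{1+7\epsilon}{1-5\epsilon}$ per step, so after either slightly tightening the preliminary radius bound defining $\wt{\mathfrak{S}}$ or, equivalently, restricting to the induced subgraph on $\wt{\mathfrak{S}}$ with its intrinsic graph distance, the concatenation argument goes through unchanged and yields the claimed Lipschitz bound.
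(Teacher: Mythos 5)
Your proof coincides with the paper's argument essentially line by line in parts (a), (b), (d), and (e): same triangle-inequality computation from $\delta_V(y)\le d(y,\xi)<6s+r$ in (a), same appeal to Proposition~\ref{p:whitney}(e) in (b), same estimate $d(x,\xi)<d(x,y)+6s+r$ via \eqref{e:prefa} and \eqref{e:srbnd} in (d), and the same chain-iteration of \eqref{e:whitc}/\eqref{e:distnei} in (e). The one place you go beyond the paper is in (c), and your instinct there is sound: the paper disposes of (c) with the single sentence that it is ``a restatement of Proposition~\ref{p:reflect}(c)'', but that proposition literally handles only pairs at $G_{\mathfrak{S}}$-distance one, and upgrading this to a Lipschitz bound at all distances requires the concatenation along a geodesic that you spell out. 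Your concern---that such a geodesic between members of $\wt{\mathfrak{S}}$ may pass through intermediate balls outside $\wt{\mathfrak{S}}$, where $Q$ is undefined---is real. However, neither of your proposed remedies actually closes that gap: tightening the threshold in \eqref{e:defStil} only relocates where $\wt{\mathfrak{S}}$ ends without removing the phenomenon, and replacing $D_{\mathfrak{S}}$ by the intrinsic graph distance of the induced subgraph on $\wt{\mathfrak{S}}$ changes the metric that the lemma names. The cleaner observation is that only the edge case is ever used: in the sole application, the display \eqref{e:eqp6} in the proof of Proposition~\ref{p:eqpi}(b) invokes (c) for pairs $B_1,B_2$ with $B_2\in\mathfrak{S}(B_1)$, i.e.\ $D_{\mathfrak{S}}(B_1,B_2)\le 1$, which is precisely Proposition~\ref{p:reflect}(c). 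So the ``Lipschitz'' phrasing of (c) is a mild overstatement in both your write-up and the paper, but one that has no downstream effect.
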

\begin{proof}
	\begin{enumerate}[(a)]
		\item Since $$B_V(y,6s) \cap B_V(\xi,r) \neq \emptyset, \quad 
		\frac{1+\eps}{\eps}s=\delta_V(y) \le d(y,\xi)<6s +  r,$$  we obtain \eqref{e:srbnd}. The second conclusion follows from \eqref{e:defStil} and  \eqref{e:srbnd}. The estimate \eqref{e:srbnd1} follows from the same argument as the proof of \eqref{e:srbnd}.
		\item Proposition \ref{p:whitney}(e) provides an uniform upper bound on the degree of the graphs.
		\item This is just a restatement of Proposition \ref{p:reflect}(c).
		\item  Let $B_V(y,s) \in \wt{\mathfrak{S}}(B(\xi,r))$, where $\xi \in \partial U, r>0$. Let $Q(B_V(y,s))= B_U(x_i,r_i) \in \mathfrak{R}$. 
		Hence, we obtain
		$$d(x_i,y) \stackrel{\eqref{e:prefa}}{ \le} \left(2+ \frac{3A}{2}\right) \frac{1+\eps}{\eps} s \stackrel{ \eqref{e:srbnd}}{<} \left(2+ \frac{3A}{2}\right) \frac{1+\eps}{1-5\eps} r. $$
		This along with $d(x_i,\xi) < d(x_i,y)+d(y,\xi) < d(x_i,y) + 6s+r  < K_0 r$ where 
		$$ K_0 = \left(2+ \frac{3A}{2}\right) \frac{1+\eps}{1-5\eps}+  \frac{6 \eps}{1-5 \eps}+ 1.$$
		In particular, $Q(B) \in \mathfrak{R}(B_U(\xi,K_0 r))$.
		\item We note that for any $B_U(x_i,r_i) \in \mathfrak{R}(B_U(\xi,r))$ with $\xi \in \partial U , r>0$, we have 
		$$r_i < \frac{\epsilon}{1-2\epsilon}r, \quad 
		d(\xi,x_i)< 3r_i+r  < \frac{1+\eps}{1-2\eps} r.$$
		This follows from the same argument as the proof of \eqref{e:srbnd}. Combining the above estimate with Proposition \ref{p:whitney}(c) and triangle inequality, we obtain the desired result. \qedhere
	\end{enumerate}
\end{proof}

\section{Sub-Gaussian heat kernel estimates} \label{s:sghke}
In this section, we recall some background material on Dirichlet forms, extension domains, and sub-Gaussian heat kernel estimates. We show that in a  extension domain the reflected Dirichlet form is regular.

\subsection{Extension domain}

Note that we always have the inclusion
\begin{equation}\label{e:trivialinc}
\sF(U) \subseteq \{f \in L^2(U,m): \mbox{there exists $\wt f \in \sF$ such that $f=\wt f$, $m$-a.e. on $U$}\}.
\end{equation}
A natural question is if the above inclusion is an equality.
The following notion plays a central role in this work.
\begin{definition} [Extension domain] \label{d:extend}
	Let $(X,d,m,\sE,\sF)$ be an MMD space and let $U \subset X$ be open. We say that $U$ is an \emph{extension domain} for  $(X,d,m,\sE,\sF)$,  if there is a bounded linear map $E: \sF(U) \to \sF$ such that $E$ is an extension map; that is, $E(f) =f$ $m$-a.e. on $U$. Here the boundedness of $E$ is with respect to the inner products $	\sE_U(\cdot,\cdot) + \langle \cdot, \cdot \rangle_{L^2(U,m)}, \sE(\cdot,\cdot) + \langle \cdot, \cdot \rangle_{L^2(X,m)}$ on $\sF(U)$ and $\sF$ respectively.
	%Note that $\mathcal{F}_{\loc}(U)  \subset L^2_{\loc}(U)$.
\end{definition}
Note that the inclusion \eqref{e:trivialinc} is an equality for any extension domain $U$.
An immediate consequence of the extension property is that the regular Dirichlet form on $L^2(X,m)$ induces a regular Dirichlet form on $L^2(\overline{U},m)$. 
By the correspondence between regular Dirichlet forms and symmetric Markov processes \cite[Theorem 7.2.1 and 7.2.2]{FOT}, this corresponds to a $m$-symmetric diffusion process on $(\overline{U},m)$ which is called the \emph{reflected diffusion} on $\overline{U}$.
\begin{lemma} \label{l:regular}
	Let $(X,d)$ be a complete metric space such that all bounded sets are precompact. 
	Let $(\sE,\sF)$ be strongly local regular Dirichlet form. If $U$ is an extension domain such that $m(\partial U)=0$, then $(\sE_U,\sF(U))$ defines a strongly local regular Dirichlet form on $\overline{U}$, where $(\sE_U,\sF(U))$  is the local Dirichlet space of Definition \ref{d:local}.
\end{lemma}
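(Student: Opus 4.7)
The plan is to verify the defining properties of a strongly local regular Dirichlet form on $L^2(\overline{U},m)$ one at a time, using the extension operator $E$ together with the regularity and strong locality of $(\sE,\sF)$ to transfer structure from $X$ to $\overline{U}$. Since $m(\partial U)=0$, I would identify $L^2(\overline{U},m)$ with $L^2(U,m)$ and regard $(\sE_U,\sF(U))$ of Definition \ref{d:local} as a symmetric bilinear form on a subspace of $L^2(\overline{U},m)$. Density of $\sF(U)$ in $L^2(\overline{U},m)$ would follow by the following restriction principle: for any $h\in\sF$, one has $h\one_U\in\sF(U)$ with $\int_U h^2\,dm+\Gamma_U(h|_U,h|_U)(U)\le \norm{h}_{L^2(X,m)}^2+\sE(h,h)$, using that $h\in\sF_{\loc}(U)$ and that $\Gamma_U(h|_U,h|_U)(A)=\Gamma(h,h)(A)$ for relatively compact Borel $A\subset U$. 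Given $f\in L^2(U,m)$ I would approximate its extension by zero in $L^2(X,m)$ via functions in $\sF\cap C_c(X)$ and restrict.

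Next I would establish closedness of $(\sE_U,\sF(U))$. Given a Cauchy sequence $(f_n)$ in $(\sF(U),(\sE_U)_1)$, the boundedness of $E$ forces $(E(f_n))$ to be Cauchy in $(\sF,\sE_1)$, hence convergent to some $g\in\sF$. Setting $f:=g|_U\in L^2(U,m)$, the restriction identity $\Gamma_U((E(f_n)-g)|_U,(E(f_n)-g)|_U)(U)=\Gamma(E(f_n)-g,E(f_n)-g)(U)\le\sE(E(f_n)-g,E(f_n)-g)$ together with $f_n=E(f_n)|_U$ $m$-a.e.\ yields $f\in\sF(U)$ and $f_n\to f$ in the $(\sE_U)_1$ norm. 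The Markovian property of $(\sE_U,\sF(U))$ and its strong locality should follow by inspecting local representatives $f^{\#}\in\sF$ as in \eqref{e:Floc}: truncating $f^{\#}$ inherits the unit contraction from $\sF$, and if $f,g\in\sF(U)$ have compact supports in $\overline{U}$ satisfying the strong-locality condition, then on any relatively compact open $V\subset U$ containing a neighborhood of $\supp_m[g]\cap U$, the local representatives inherit the strong locality condition in $X$ (possibly after multiplying by a suitable cutoff in $\sF\cap C_c(X)$), giving $\Gamma_U(f,g)\equiv 0$.

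For regularity, two density statements must be verified. Since $X$ is complete and bounded sets are precompact, $X$ is locally compact Hausdorff and so is the closed set $\overline{U}$. Given $f\in C_c(\overline{U})$ with support $K$, I would use Tietze extension and a Urysohn cutoff supported on a small neighborhood of $K$ in $X$ to obtain $\tilde f\in C_c(X)$ with $\tilde f|_{\overline{U}}=f$; then by regularity of $(\sE,\sF)$, approximate $\tilde f$ by $g_n\in\sF\cap C_c(X)$ in sup norm. Each $g_n|_{\overline{U}}$ lies in $\sF(U)\cap C_c(\overline{U})$ (compactness of $\supp(g_n)\cap\overline{U}$ in $\overline{U}$ is inherited from its compactness in $X$, and the membership in $\sF(U)$ was already established above), and $g_n|_{\overline{U}}\to f$ uniformly. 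For the $(\sE_U)_1$-density, given $f\in\sF(U)$ I would apply the same regularity of $(\sE,\sF)$ to $E(f)\in\sF$, obtaining $g_n\in\sF\cap C_c(X)$ with $g_n\to E(f)$ in $\sE_1$, and again restrict: the restriction identity for $\Gamma_U$ gives $(\sE_U)_1(g_n|_{\overline{U}}-f,g_n|_{\overline{U}}-f)\le\sE_1(g_n-E(f),g_n-E(f))\to 0$.

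The main technical obstacle I anticipate is the bookkeeping around the restriction identity $\Gamma_U(h|_U,h|_U)(A)=\Gamma(h,h)(A)$ for $h\in\sF$ and Borel $A\subset U$, which is what connects the extension operator to control of the energy measure on $U$; this is essentially built into Definition \ref{d:local}, but needs to be invoked carefully, especially in the closedness and regularity steps where one passes from a Cauchy/approximating sequence in $\sF$ to one in $\sF(U)$. Everything else — Markov, strong locality, sup-norm density — is a routine transfer from the corresponding property of $(\sE,\sF)$ via local representatives and Tietze extension.
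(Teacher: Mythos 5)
Your proposal is correct and follows essentially the same route as the paper's proof: closedness via boundedness of the extension operator $E$ and completeness of $(\sF,\sE_1)$, and regularity via Tietze extension together with the regularity of $(\sE,\sF)$ followed by restriction to $\overline{U}$. The paper's actual argument is considerably terser — it addresses only closedness and regularity explicitly, taking $L^2$-density, the Markovian property, and strong locality as routine — whereas you supply these in detail (the restriction principle $\Gamma_U(h|_U,h|_U)(A)=\Gamma(h,h)(A)$ for $h\in\sF$, truncation of local representatives for Markovianity, and strong locality via local representatives with cutoffs). Your expanded treatment is a faithful elaboration rather than a different proof, and the technical points you flag (the restriction identity and monotone passage from relatively compact $A\subset U$ to $A=U$) are exactly where the bookkeeping lives.
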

\begin{proof}
	We equip $\sF(U), \sF$ with the corresponding inner products $\sE_U(\cdot,\cdot) + \langle \cdot, \cdot \rangle_{L^2(U,m)}$, and  $\sE(\cdot,\cdot) + \langle \cdot, \cdot \rangle_{L^2(X,m)}$  respectively.
	Let $E: \sF(U) \to \sF$ be an extension operator as given in Definition \ref{d:extend}. If $f_n$ is a Cauchy sequence in $\sF(U)$, then $E(f_n)$ is a Cauchy sequence in $\sF$ (since $E$ is a bounded operator). Since $\sF$ is complete, $E(f_n)$ converges to a limit, say $g \in \sF$. The restriction of $g$ to $U$ yields the desired limit of $f_n$ in $\sF(U)$. Hence $(\sE_U,\sF(U))$ is a Dirichlet form on $L^2(\overline{U},m)$ (note that since $m(\partial U)=0$, $L^2(U,m)$ can be identified with $L^2(\overline{U},m)$).
	
	The regularity of $(\sE_U,\sF(U))$ is an easy consequence of the regularity of $(\sE,\sF)$ and the extension property. To this end, note that function $f \in \sF(U)$ has an extension $E(f) \in \sF$ which is a limit of functions in $C_c(X) \cap \sF$ and hence by restricting this sequence of functions to $\overline{U}$, we obtain that $C_c(\overline{U}) \cap \sF(U)$ is dense in $\sF(U)$.  Since any function in $C_c(\overline{U})$ can be extended to a function on $C_c(X)$, by the same argument as above, we obtain that any function on $C_c(\overline{U})$ is a limit (with respect to the uniform norm) of functions in  $C_c(\overline{U}) \cap \sF(U)$.
\end{proof}

In general, we note that the Dirichlet form $(\sE_U,\sF(U))$ need not be regular on $\overline{U}$. In particular, $C_c(\overline{U}) \cap \sF(U)$ need not be dense in $\sF(U)$. This can be seen by considering the slit domain $U=\mathbb{R}^2 \setminus \{(t,0): t \in (-\infty,0]\}$ for the Brownian motion on $\mathbb{R}^2$.
\subsection{Sub-Gaussian heat kernel estimates and its consequences}
In this subsection, we recall some previous results concerning the heat kernel and its sub-Gaussian estimates.   We start with recall the definition of  capacity. 
For disjoint Borel sets $B_1,B_2$ such that $B_2$ is closed and $\overline{B_1} \Subset B_2^c$ (by $B_1 \Subset B_2^c$, we mean that $\ol B_1$ is compact and $\ol B_1 \subset B_2^c$), we
define $\sF(B_1,B_2)$ as the set of function $\phi \in \sF$ such that $\phi \equiv 1$ in an open neighborhood of $B_1$, and $\supp_m(\phi) \subset B_2^c$.
For such sets $B_1$ and $B_2$, we define the capacity between them as
\[
\Cap(B_1,B_2)= \inf \set{\sE(f,f) \mid f \in \sF(B_1,B_2)}.
\]

The sub-Gaussian heat kernel estimates are known to be equivalent to properties that are known to be stable under perturbations. To recall this characterization, we recall the relevant properties. 	

\begin{definition}[\hypertarget{pi}{$\operatorname{PI}(\Psi)$}, \hypertarget{cs}{$\operatorname{CS}(\Psi)$}, and \hypertarget{cap}{$\operatorname{cap}(\Psi)$}]\label{d:PI-CS}
	Let $(X,d,m,\mathcal{E},\mathcal{F})$ be an MMD space, $\Psi:[0,\infty) \to [0,\infty)$ be a scale function, and let $\Gamma(\cdot,\cdot)$ denote the corresponding energy measure.
	\begin{enumerate}[(a)]
		\item 
		We say that $(X,d,m,\sE,\sF)$ satisfies the \textbf{Poincar\'e inequality} \hyperlink{pi}{$\operatorname{PI}(\Psi)$},
		if there exist constants $C_{P},A_P\ge 1$ such that 
		for all $(x,r)\in X\times(0,\infty)$ and all $f \in \mathcal{F}_{\loc}(B(x,A_Pr))$,
		\begin{equation} \tag*{$\operatorname{PI}(\Psi)$}
			\int_{B(x,r)} (f -   f_{B(x,r)})^2 \,dm  \le C_{P} \Psi(r) \int_{B(x,A_P r)}d\Gamma(f,f),
		\end{equation}
		where $f_{B(x,r)}:= m(B(x,r))^{-1} \int_{B(x,r)} f\, dm$.
		
		\item 
		For open subsets $U,V$ of $X$ with $\overline{U} \subset V$, we say that
		a function $\phi \in \sF$ is a \emph{cutoff function} for $U \subset V$
		if $0 \le \phi \le 1$, $\phi=1$ on a neighbourhood of $\overline{U}$ and $\supp_m[\phi] \subset V$.
		Then we say that $(X,d,m,\sE,\sF)$ satisfies the \textbf{cutoff Sobolev inequality} \hyperlink{cs}{$\operatorname{CS}(\Psi)$},
		if there exists $C_S>0$ such that the following holds: for all $x \in X$ and $R,r>0$,
		there exists a cutoff function $\phi \in \sF$ for $B(x,R) \subset B(x,R+r)$ such that for all $f \in \sF$,
		\begin{equation}\tag*{$\operatorname{CS}(\Psi)$}
			\begin{split}
				&\int_{B(x,R+r) \setminus B(x,R)} \wt{f}^2\, d\Gamma(\phi,\phi)\\
				&\mspace{40mu}\le \frac{1}{8} \int_{B(x,R+r) \setminus B(x,R)} \wt{\phi}^2 \, d\Gamma(f,f)
				+ \frac{C_S}{\Psi(r)} \int_{B(x,R+r) \setminus B(x,R)} f^2\,dm;
			\end{split}
		\end{equation}
		where $\wt{f},\wt{\phi}$ are the quasi-continuous versions\footnote{We refer the reader \textsection \ref{ss:compare} for the definition of quasi-continuous functions with respect to a
			regular symmetric Dirichlet form.} of $f,\phi \in \sF$ so that $\wt{\phi}$ is uniquely determined $\Gamma(f,f)$-a.e.\ for
		any $f\in\mathcal{F}$; see \cite[Theorem 2.1.3, Lemmas 2.1.4 and 3.2.4]{FOT}.
		
		\item We say that an MMD space $(X,d,m,\sE,\sF)$ satisfies the \emph{capacity upper bound} \hypertarget{cap}{$\operatorname{cap}(\Psi)_\le$} 	
		if there exist $C_1,A_1,A_2>1$ such that for all $R\in (0,\diam(X,d)/A_2)$, $x \in X$, we have
		\begin{equation}\tag*{$\operatorname{cap}(\Psi)_\le$}
		\Cap(B(x,R),B(x,A_1R)^c) \le C_1 \frac{m(B(x,R))}{\Psi(R)}.
		\end{equation}
	\end{enumerate}
\end{definition}

The following theorem was first proved in the context of random walks on graphs by Barlow and Bass \cite{BB04}. It was later extended to MMD spaces by Barlow, Bass and Kumagai \cite{BBK}. Following a simplification of cutoff Sobolev inequality by Andres and Barlow \cite{AB}, the following characterization was proved by Grigor'yan, Hu, and Lau \cite{GHL15} for unbounded MMD spaces. The same arguments also apply with minor changes to the bounded setting as pointed out in \cite[Remark 2.9]{KM20} and \cite[Theorem 4.9]{KM23}.

\begin{theorem} (\cite{BB04,BBK,GHL15}) \label{t:hke}
	Let $(X,d,m,\sE,\sF)$ be an MMD space
	such that $m$ is a doubling measure on $(X,d)$ and let $\Psi$ be a scale function. Then the following are equivalent.
	\begin{enumerate}[(a)]
		\item $(X,d,m,\sE,\sF)$  satisfies   \hyperlink{hke}{$\on{HKE(\Psi)}$}. 
		\item  $(X,d,m,\sE,\sF)$  satisfies   \hyperlink{pi}{$\on{PI(\Psi)}$} and \hyperlink{cs}{$\on{CS(\Psi)}$}.
	\end{enumerate}
\end{theorem}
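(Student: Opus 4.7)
My plan is to address the two implications separately, relying on the machinery developed in \cite{BB04, BBK, AB, GHL15}. The easier direction is $\text{(a)} \Rightarrow \text{(b)}$: given \hyperlink{hke}{$\on{HKE(\Psi)}$} one obtains \hyperlink{pi}{$\on{PI(\Psi)}$} via the standard heat-kernel-to-Poincar\'e argument. Namely, for $f \in \sF$ and any ball $B=B(x,r)$, one uses the spectral decomposition together with the near-diagonal lower bound \eqref{e:nlhke} at time $t = \Psi(r)$ to estimate $\int_{B}(f - f_B)^2\,dm$ in terms of $\sE(f,f)$ restricted to a slightly enlarged ball. The cutoff Sobolev inequality \hyperlink{cs}{$\on{CS(\Psi)}$} would be extracted from heat kernel bounds via mean exit times: the upper bound \eqref{e:uhke} implies $\bE^x[\tau_{B(x,r)}] \asymp \Psi(r)$, and a capacity estimate $\operatorname{Cap}(B(x,r),B(x,2r)^c) \lesssim m(B(x,r))/\Psi(r)$ provides an optimal cutoff function whose energy measure satisfies the required self-improvement property.

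The substantially harder direction is $\text{(b)} \Rightarrow \text{(a)}$, which I would carry out in four main steps following \cite{GHL15} (and \cite{AB} for the CS simplification). First, combine \hyperlink{pi}{$\on{PI(\Psi)}$} with volume doubling to derive a Nash-type inequality, which by the standard Nash-Moser argument yields an on-diagonal upper bound $p_t(x,x) \lesssim 1/m(B(x,\Psi^{-1}(t)))$. Second, upgrade this to a full upper heat kernel bound with sub-Gaussian tail via Davies' perturbation method, where the key input is \hyperlink{cs}{$\on{CS(\Psi)}$} used to construct appropriate Lipschitz-type functions $e^{\psi}$ that dominate the Aronson-Davies exponent while the CS inequality controls the resulting error terms. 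Third, derive a mean exit time lower bound from \hyperlink{cs}{$\on{CS(\Psi)}$} combined with the capacity upper bound \hyperlink{cap}{$\operatorname{cap}(\Psi)_\le$} (which itself is a consequence of CS). Fourth, combine these with \hyperlink{pi}{$\on{PI(\Psi)}$} to run a Moser-type oscillation argument yielding a parabolic Harnack inequality, from which the near-diagonal lower bound \eqref{e:nlhke} follows by a standard chaining argument using volume doubling.

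The main obstacle is the second step, the derivation of the off-diagonal sub-Gaussian upper bound from the on-diagonal bound. The naive Davies method produces Gaussian decay and is inadequate here; the central technical point is to exploit the self-improving nature of \hyperlink{cs}{$\on{CS(\Psi)}$}, which allows one to iterate the perturbation argument with an optimal choice of cutoff functions. This is carried out in detail in \cite[Section 4]{GHL15}, and involves a delicate interplay between the prefactor $1/8$ in the CS inequality (crucially less than $1$) and the resulting geometric series in the iteration. A secondary technical issue is that the original formulation in \cite{GHL15} assumes $\diam(X,d) = \infty$; for bounded MMD spaces one must verify that the arguments adapt, as indicated in \cite[Remark 2.9]{KM20} and \cite[Theorem 4.9]{KM23}, where the capacity bound needs to hold only at scales below $\diam(X,d)/A_2$, so the global functional inequalities still suffice at small-to-intermediate scales.

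Since the proof is spread across several lengthy papers and amounts to synthesizing well-established machinery, rather than reproducing the arguments I would present Theorem \ref{t:hke} as a citation of \cite{BB04, BBK, AB, GHL15}, noting the adaptations from \cite[Remark 2.9]{KM20} and \cite[Theorem 4.9]{KM23} for the bounded case. The theorem will then serve as a black-box tool used later in the paper: the task in the subsequent sections is to verify \hyperlink{pi}{$\on{PI(\Psi)}$} and \hyperlink{cs}{$\on{CS(\Psi)}$} for the reflected Dirichlet form $(\sE_U, \sF(U))$, rather than to re-prove Theorem \ref{t:hke} itself.
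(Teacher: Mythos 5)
Your proposal is correct and matches the paper's treatment: the paper does not prove Theorem~\ref{t:hke} but cites it as established in \cite{BB04,BBK,GHL15} (with the bounded-case adaptation noted via \cite{KM20,KM23}), exactly as you conclude. Your sketch of the two directions is an accurate high-level account of where the cited arguments come from and of the key technical point (iterating the Davies perturbation using the prefactor $1/8$ in \hyperlink{cs}{$\operatorname{CS}(\Psi)$}).
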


The capacity upper bound \hyperlink{cap}{$\on{cap}(\Psi)_\le$} and the regularity of the Dirichlet form $(\sE,\sF)$
implies the existence of a partition of unity on $V$ with controlled energy by a standard argument.
\begin{lemma} \label{l:partition}
	Let $(X,d,m,\sE,\sF)$ be an MMD space that satisfies \hyperlink{cap}{$\on{cap}(\Psi)_\le$} and let $m$ be a doubling measure. There exists $\epsilon_0 \in (0,1/6),C_1,c_1 > 0$ such that 
	for any open set $V \subsetneq X$ be open and for any $\epsilon$-Whitney cover  $\mathfrak{S}$   of $V$ where $0<\epsilon < \epsilon_0$, there exists a partition on unity $\{\psi_B: B \in \mathfrak{S}\}$ of $V$ such that the following properties hold:
	\begin{enumerate}[(a)]
		\item (partition of unity) $\sum_{B \in \mathfrak{S}} \psi_B \equiv \one_V$ and $0 \le \psi_B \le 1$ for all $B \in \mathfrak{S}$. 
		\item (controlled energy) For each $B=B_V(y,s) \in \mathfrak{S}$, then $\psi_B \in \contfunc_c(X) \cap \sF$ such that $\psi_B \ge c_1$ on $B_V(y,3s)$, $\psi_B \equiv 0$ on $B_V(y,6s)^c$ and 
		\begin{equation}\label{e:capbnd}
		\sE(\psi_B,\psi_B) \le C_1 \frac{m(B(y,s))}{\Psi(s)}.
		\end{equation}
		
	\end{enumerate}
\end{lemma}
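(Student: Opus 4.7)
The plan is to construct, for each Whitney ball $B = B_V(y,s) \in \mathfrak{S}$, a bump function $\phi_B \in C_c(X) \cap \sF$ via \hyperlink{cap}{$\on{cap}(\Psi)_\le$} with $\phi_B \equiv 1$ on $B(y,3s)$, $\supp(\phi_B) \subset B(y,6s)$, and $\sE(\phi_B,\phi_B) \lesssim m(B(y,s))/\Psi(s)$; then check that $\Phi := \sum_{B \in \mathfrak{S}} \phi_B$ satisfies $1 \le \Phi \le N$ on $V$ for some constant $N$; and finally define $\psi_B := \phi_B/\Phi$, verifying \eqref{e:capbnd} through a Leibniz-rule computation.

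First I would choose $\eps_0 \in (0,1/6)$ small enough so that $\delta_V(y) = \frac{1+\eps}{\eps}s > 6s$ for every $B_V(y,s)$ in an $\eps$-Whitney cover with $\eps < \eps_0$, which guarantees $B(y,6s) \subset V$. A single application of \hyperlink{cap}{$\on{cap}(\Psi)_\le$} centered at $y$ with $R = 3s$ would force the support into $B(y, 3A_1 s)$, which can exceed $B(y,6s)$ when $A_1 > 2$; to avoid this I work at the finer scale $r := 3s/A_1$. Cover $B(y,3s)$ by finitely many balls $\{B(x_i,r)\}_{i=1}^{M}$ with $x_i \in B(y,3s)$, the number $M$ being bounded in terms of $A_1$ and the doubling constant of $m$. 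For each $i$, \hyperlink{cap}{$\on{cap}(\Psi)_\le$} together with regularity of $(\sE,\sF)$ provides $\chi_i \in C_c(X) \cap \sF$ with $0 \le \chi_i \le 1$, $\chi_i \equiv 1$ on a neighborhood of $B(x_i,r)$, $\supp(\chi_i) \subset \overline{B(x_i,A_1 r)} = \overline{B(x_i,3s)}$ (and any $z$ with $d(z,y) \ge 6s$ satisfies $d(z,x_i) > 3s$, so $\chi_i(z) = 0$), and $\sE(\chi_i,\chi_i) \le 2C_1 m(B(x_i,r))/\Psi(r)$. Setting $\phi_B := \chi_1 \vee \cdots \vee \chi_M$ and using $\sE(u \vee v, u \vee v) \le \sE(u,u) + \sE(v,v)$ iteratively together with volume doubling and \eqref{e:reg} to compare $m(B(x_i,r))/\Psi(r)$ with $m(B(y,s))/\Psi(s)$, the resulting $\phi_B \in C_c(X) \cap \sF$ is bounded by $1$, equals $1$ on a neighborhood of $B(y,3s)$, vanishes outside $B(y,6s)$, and has $\sE(\phi_B,\phi_B) \lesssim m(B(y,s))/\Psi(s)$.

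Set $\Phi := \sum_{B \in \mathfrak{S}} \phi_B$. For every $x \in V$ the Whitney covering property gives some $B_V(y',s') \in \mathfrak{S}$ with $x \in B(y',K_\eps s') \subset B(y',3s')$, hence $\phi_{B_V(y',s')}(x) = 1$ and therefore $\Phi \ge 1$ on $V$. Conversely, Proposition \ref{p:whitney}(e) combined with the metric doubling implied by the doubling of $m$ bounds, by some $N \in \bN$ depending only on $\eps$ and the doubling constant, the number of $B_V(y',s') \in \mathfrak{S}$ whose support $B(y',6s')$ contains a given $x$; so $1 \le \Phi \le N$ on $V$ and the sum defining $\Phi$ is locally finite and continuous on $V$. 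Defining $\psi_B := \phi_B/\Phi$ on $V$ and $\psi_B := 0$ on $X \setminus V$, the inclusion $\supp(\phi_B) \subset B(y,6s) \subset V$ and $\Phi \ge 1$ there make $\psi_B$ continuous with $\supp(\psi_B) \subset B(y,6s)$; moreover $\sum_B \psi_B = \one_V$, $0 \le \psi_B \le 1$, and $\psi_B \ge \phi_B/N \ge 1/N$ on $B(y,3s)$, so $c_1 := 1/N$ works.

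Finally, applying the Leibniz rule to $\psi_B = \phi_B \cdot (1/\Phi)$ on the set $\{\Phi \ge 1\} \supset \supp(\phi_B)$ yields the pointwise bound
\[
d\Gamma(\psi_B,\psi_B) \le \tfrac{2}{\Phi^2}\, d\Gamma(\phi_B,\phi_B) + \tfrac{2\phi_B^2}{\Phi^4}\, d\Gamma(\Phi,\Phi) \le 2\, d\Gamma(\phi_B,\phi_B) + 2\, d\Gamma(\Phi,\Phi)
\]
on $\supp(\phi_B)$, since $\Phi \ge 1$ and $0 \le \phi_B \le 1$ there. Integrating, the first term contributes $\lesssim m(B(y,s))/\Psi(s)$ by the estimate on $\phi_B$. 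For the second, bounded overlap means that on $B(y,6s)$ the sum $\Phi$ reduces to a finite subsum of at most $N$ terms $\phi_{B'}$, with $B' = B_V(y',s')$ of radius $s' \asymp s$; Cauchy--Schwarz for energy measures yields $d\Gamma(\Phi,\Phi) \le N\sum_{B'} d\Gamma(\phi_{B'},\phi_{B'})$ on $B(y,6s)$, and combined with the per-ball energy bound and doubling this gives $\Gamma(\Phi,\Phi)(B(y,6s)) \lesssim m(B(y,s))/\Psi(s)$, proving \eqref{e:capbnd}. The step that needs the most care is showing $\psi_B \in \sF$ despite being defined as a quotient; I would address this by writing $\psi_B = \phi_B \cdot G(\wt\Phi)$, where $\wt\Phi$ is the finite subsum of $\Phi$ of those $\phi_{B'}$ whose support meets $B(y,6s)$ (so $\wt\Phi \in \sF$) and $G : \bR \to [0,1]$ is Lipschitz with $G(0)=0$ and $G(t) = 1/t$ on $[1,N]$, and then invoking the Markovian calculus to conclude $G(\wt\Phi) \in \sF$ and that products of bounded $\sF$-functions are in $\sF$.
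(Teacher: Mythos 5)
Your construction is essentially the standard Whitney partition-of-unity argument that the paper references (the paper only gives a one-line proof sketch pointing to \cite[p.~504]{BBK} and \cite[Lemma 2.5]{Mur20}); your steps --- build bump functions $\phi_B$ from capacity, sum and normalize, control $\sE(\psi_B,\psi_B)$ via the Leibniz/chain rule and bounded overlap from Proposition~\ref{p:whitney}(d) --- match that blueprint, and the details check out. The device of working at the finer scale $r=3s/A_1$ so that the support lands inside $B(y,6s)$, the use of $\sE(u\vee v,u\vee v)\le\sE(u,u)+\sE(v,v)$ from \cite[Theorem 1.4.2(i)]{FOT}, the factor-of-$N$ Cauchy--Schwarz for energy measures, and the $G(\wt\Phi)$ trick to put $1/\Phi$ into $\sF$ are all correct.

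One small point you pass over: \hyperlink{cap}{$\on{cap}(\Psi)_\le$} is only asserted for $R<\diam(X,d)/A_2$, while you apply it at radius $r=3s/A_1$. Since a Whitney ball has $s=\tfrac{\eps}{1+\eps}\delta_V(y)\le\tfrac{\eps}{1+\eps}\diam(X,d)$, this constraint is satisfied once $\eps_0$ is chosen small enough so that $\tfrac{3\eps_0}{A_1(1+\eps_0)}<\tfrac{1}{A_2}$; you should make this explicit, since it is part of why $\eps_0$ is allowed to depend on the data of the Dirichlet form and not just on the geometry. With that adjustment the argument is complete.
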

\noindent {\em Proof sketch.} The proof follows from the argument in \cite[p. 504]{BBK} or \cite[Proof of Lemma 2.5]{Mur20} where the  required bounded overlap property for that argument follows from Proposition \ref{p:whitney}(d).

In Theorem \ref{t:besov}, we estimate the Dirichlet energy and energy measure of a function via a Besov energy type expression. The statement  is inspired by a similar result due to Grigor'yan, Hu and Lau \cite[Theorem 4.2]{GHL03} and its extension due to Kumagai and Sturm \cite[Theorem 4.1]{KuSt} (see also \cite[Theorem 1]{Jons} for this result on the Sierpi\'nski gasket and \cite[Theorem 1.6.2]{KS} on the Euclidean space).  One difference from these works is that the earlier results contain some technical conditions \cite[(4.2)]{KuSt} and \cite[(4.9)]{GHL03} that turn out to be unnecessary. 
Furthermore, another improvement is that we obtain estimates on the \emph{energy measure} along with  estimates on energy.
We use the following notation in Theorem \ref{t:besov}.  Let $\delta>0, K \subset X$ and denote by $K_\delta:= \{y \in X : \mbox{there exists $z \in K$ such that $d(y,z)<\delta$}\}$ the $\delta$-neighborhood of $K$. Part (b) of the lemma will provide a sufficient condition to verify if a given function $f \in L^2(X,m)$ belongs to $\sF$. If the assumption of both part (a) and (b) hold then this condition given by \eqref{e:fcrit} is both necessary and sufficient.
\begin{theorem} \label{t:besov}
	Let $(X,d,m,\sE,\sF)$ be an MMD space and let $\Psi$ is a scale function. 
	\begin{enumerate}[(a)]
		\item Let $m$ satisfy the volume doubling property and $(X,d,m,\sE,\sF)$ satisfy the Poincar\'e inequality \hyperlink{pi}{$\operatorname{PI}(\Psi)$} then there exists $C>0$ such that 
		\[
		\sup_{r \in (0,\infty)}  \frac{1}{\Psi(r)}\int_X  \fint_{B(x,r)} (f(x)-f(y))^2 \, m(dy) \, m(dx) \le C \sE(f,f),    
		\]
		for all $f \in \sF$.
		\item  Let $m$ satisfy the volume doubling property and  $(\sE,\sF)$ be a conservative Dirichlet form satisfy the heat kernel upper bound \eqref{e:uhke}. If $f \in L^2(X,m)$ satisfies
		\begin{equation} \label{e:fcrit}
			\limsup_{r \to 0}  \frac{1}{\Psi(r)}\int_X  \fint_{B(x,r)} (f(x)-f(y))^2 \, m(dy) \, m(dx)<\infty,
		\end{equation}
		then $f \in \sF$.
		There exists $C>0$  depending only the constants in the volume doubling property, \eqref{e:reg} and \eqref{e:uhke}  such that 
		\begin{equation}  \label{e:uener}
			\sE(f,f) \le C  \limsup_{r \to 0}  \frac{1}{\Psi(r)}\int_X   \fint_{B(x,r)} (f(x)-f(y))^2 \, m(dy) \, m(dx),
		\end{equation}
		for all $f \in \sF$. 
		\item Let     $(X,d,m,\sE,\sF)$ satisfy the assumptions in (b) above. Then there exists $C>0$ such that
		\begin{equation} \label{e:umeas}
			\Gamma(f,f)(K) \le C \lim_{\delta \downarrow 0}\limsup_{r \to 0}  \frac{1}{\Psi(r)}\int_{K_\delta}  \fint_{B(x,r)} (f(x)-f(y))^2 \, m(dy) \, m(dx),
		\end{equation}
		for all $f \in \sF$ and for any compact set $K \subset X$.
	\end{enumerate}	
\end{theorem}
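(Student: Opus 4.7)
The strategy is to prove all three parts via the semigroup characterization of $\sE$ from \eqref{e:semigroup}, using the heat kernel bound \eqref{e:uhke} and the Poincar\'e inequality as appropriate. Write $I(r) := \int_X \fint_{B(x,r)}(f(x)-f(y))^2\, m(dy)\, m(dx)$. For part (a), I would cover $X$ by a family $\{B(z_j, r/2)\}_{j \in J}$ of bounded multiplicity, available because the doubling of $m$ implies metric doubling of $(X,d)$ (Lemma \ref{l:vd}). Volume doubling then gives $m(B(x,r)) \asymp m(B(z_j, 3r/2))$ whenever $x \in B(z_j, r/2)$, so
\[
I(r) \lesssim \sum_j \frac{1}{m(B(z_j, 3r/2))}\int_{B(z_j, 3r/2)}\int_{B(z_j, 3r/2)}(f(x)-f(y))^2\, dm(y)\, dm(x).
\]
Using $(f(x)-f(y))^2 \le 2(f(x)-f_{B_j})^2 + 2(f(y)-f_{B_j})^2$ with $B_j = B(z_j, 3r/2)$, applying \hyperlink{pi}{$\operatorname{PI}(\Psi)$} on each $B_j$, and summing with the bounded overlap $\sum_j \one_{B(z_j, 3 A_P r/2)} \lesssim 1$ gives $I(r) \lesssim \Psi(r)\, \sE(f,f)$ uniformly in $r$.

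For part (b), conservativeness yields
\[
\frac{1}{t}\langle f - P_t f, f\rangle = \frac{1}{2t}\int_X\int_X (f(x)-f(y))^2\, p_t(x,y)\,dm(y)\,dm(x).
\]
Set $r := \Psi^{-1}(t)$ and split the inner integral into the on-diagonal region $\{d(x,y) < r\}$ and far annuli $A_k := \{2^k r \le d(x,y) < 2^{k+1}r\}$, $k \ge 0$. On the on-diagonal region, $p_t(x,y) \lesssim 1/m(B(x,r))$ gives a contribution $\lesssim I(r)/\Psi(r)$. On $A_k$, combining \eqref{e:uhke} with volume doubling and the standard Legendre-transform lower bound on $t\Phi(c_2 \cdot 2^k r/t)$ extracted from \eqref{e:defPhi} and \eqref{e:reg} yields a contribution of order $E_k\, 2^{k\alpha}\, I(2^{k+1}r)/\Psi(2^{k+1}r)$, where $\alpha$ is the volume doubling exponent and $E_k$ is a super-polynomially decaying factor for which $\sum_k 2^{k(\alpha + \beta_2)} E_k < \infty$. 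Under assumption \eqref{e:fcrit}, $I(s)/\Psi(s)$ is bounded uniformly on some interval $(0, s_0]$, so $t^{-1}\langle f - P_t f, f\rangle$ stays bounded as $t\downarrow 0$. Since this quantity is non-increasing in $t$ (by spectral calculus for the self-adjoint semigroup), the semigroup characterization \eqref{e:semigroup} gives $f \in \sF$ and \eqref{e:uener}.

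For part (c), the key ingredient is the localized identity
\[
\int_X g\, d\Gamma(f,f) = \lim_{t\downarrow 0}\frac{1}{2t}\int_X\int_X p_t(x,y)\, g(x)\, (f(x)-f(y))^2\, dm(y)\, dm(x)
\]
for $g \in \sF \cap L^\infty$, obtained by expanding $\sE(f, fg) - \tfrac{1}{2}\sE(f^2, g)$ via \eqref{e:semigroup} and using $\int p_t(x,\cdot)\,dm = 1$ to rewrite the two double integrals in symmetric-difference form. Choose $g_\delta \in \sF \cap C_c(X)$ with $\one_K \le g_\delta \le \one_{K_\delta}$ (available by regularity of $(\sE,\sF)$ and a standard approximation from a Lipschitz cutoff of the distance to $K$). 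Apply the identity to $g_\delta$ and repeat the annular decomposition of (b), now with the additional restriction $x \in K_\delta$. The near part together with the far annuli satisfying $2^{k+1} r \le \delta$ force $y \in K_{2\delta}$, contributing at most $C\limsup_{r \downarrow 0} I_{K_{2\delta}}(r)/\Psi(r)$, where $I_{K'}(r) := \int_{K'} \fint_{B(x,r)}(f(x)-f(y))^2\,dm(y)\,dm(x)$. The remaining far-far annuli ($2^{k+1} r > \delta$, i.e., $k > \log_2(\delta/(2r))$) are bounded via part (a) by a constant times $\sE(f,f)\cdot \sum_{k > \log_2(\delta/(2r))} 2^{k(\alpha+\beta_2)} E_k$, which tends to $0$ as $r \downarrow 0$ for fixed $\delta$ because of the super-polynomial decay of $E_k$. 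Since $\Gamma(f,f)(K) \le \int g_\delta\, d\Gamma(f,f)$, passing to $r \downarrow 0$ and then $\delta \downarrow 0$ yields \eqref{e:umeas}.

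The main obstacle is the annular book-keeping: the super-exponential decay of the heat kernel tail must be shown to beat the polynomial growth factors $2^{k\alpha}$ (from volume doubling) and $2^{k\beta_2}$ (from \eqref{e:reg}). Extracting the correct asymptotic lower bound on $t\Phi(c_2 d(x,y)/t)$ from the abstract definition \eqref{e:defPhi} using only \eqref{e:reg}, without assuming a specific form for $\Psi$, is the central technical point. In part (c) the most delicate issue is the far-far contribution, which is a priori a global quantity bounded only by $\sE(f,f)$ rather than by a local Besov energy; showing that this non-local error vanishes in the limit $r \downarrow 0$ for each fixed $\delta$ is precisely what allows the iterated limit to produce a genuinely localized estimate on $\Gamma(f,f)(K)$.
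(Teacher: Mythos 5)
Your approach is essentially the one the paper takes: semigroup characterization plus heat kernel upper bound plus dyadic decomposition, with the Poincar\'e inequality doing the work in (a) and the localized energy-measure identity $\int g\,d\Gamma(f,f)=\lim_{t\downarrow 0}\frac{1}{2t}\iint g(x)(f(x)-f(y))^2 p_t(x,y)\,dm\,dm$ driving (c). Part (a) is the same net-and-PI argument as \eqref{e:net}. In part (b) your dyadic decomposition is \emph{dual} to the paper's: you anchor at $r=\Psi^{-1}(t)$ and expand outward through annuli $\{2^k r\le d(x,y)<2^{k+1}r\}$ with $k\ge 0$, while the paper fixes $r_0$, first shows the outer tail $A(f,t,r_0)\to 0$ as $t\downarrow 0$ for fixed $r_0$, and then shreds the inner ball $B(x,r_0)$ into inward dyadic shells down to scale $\Psi^{-1}(t)$. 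Both organizations produce the required bound once the super-exponential decay from \eqref{e:Phibnd} and \eqref{e:Phiscale} is seen to dominate the polynomial growth $2^{k\alpha}$ (doubling) and $2^{k\beta_2}$ (from \eqref{e:reg}), so this is a legitimate alternative route.

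Two small repairs are needed. First, in part (c) you bound the far-far annuli by invoking part (a) to get a constant times $\sE(f,f)$; but part (a) requires \hyperlink{pi}{$\operatorname{PI}(\Psi)$}, which is not among the hypotheses of (b)/(c). The fix is elementary and does not change your scheme: use $I_{K_\delta}(s)\le I(s)\le 4\|f\|_2^2$ and $\Psi(s)\ge\Psi(\delta)$ for $s>\delta$, so the far-far sum is at most $\frac{4\|f\|_2^2}{\Psi(\delta)}\sum_{k>\log_2(\delta/(2r))}E_k 2^{k(\alpha+\beta_2)}$, which $\to 0$ as $r\downarrow 0$ for each fixed $\delta$ because the tail of a convergent series vanishes. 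Second, the energy-measure identity you use in (c) requires $fg,f^2\in\sF$, so $f$ must first be truncated to $\sF\cap L^\infty$ (the paper reduces to this case via Definition \ref{d:EnergyMeas} before running the argument); this should be stated explicitly. With these two adjustments the proof is complete.
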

\begin{proof} 
	We record some useful estimates concerning the function $\Phi$ defined in \eqref{e:defPhi}. By \cite[Lemma 2.10]{Mur20}, there exists $C_1>0$ such that 
	\begin{equation} \label{e:Phiscale}
		C_1^{-1} \left( \frac S s \right)^{\beta_2/(\beta_2-1)} \le \frac{\Phi(S)}{\Phi(s)} \le C_1 \left( \frac S s \right)^{\beta_1/(\beta_1-1)}, \q \mbox{for all $0 < s \le S$.}
	\end{equation}
	By \cite[(6.14)]{GT12}, there exists $c  \in (0,1)$ such that 
	\begin{equation} \label{e:GT12}
		\Phi \left( c_1 \frac{r}{\Psi(r)}\right) \le \frac{1}{\Psi(r)} \le \Phi\left( \frac{2 r }{\Psi(r)}\right), \quad \mbox{for all $r>0$.}
	\end{equation}
	Combining \eqref{e:Phiscale} and \eqref{e:GT12}, there exists $C_2 \in (1,\infty)$ such that 
	\begin{equation} \label{e:Phibnd}
		C_2^{-1} \le	 t \Phi\left( \frac{\Psi^{-1}(t)}{t}\right) \le C_2, \quad \mbox{for all $t>0$}.
	\end{equation}
	\begin{enumerate}[(a)]
		\item 
		Let $N \subset X$ be a $r$-net (that is, maximally $r$-separated subset). By the volume doubling property, there exists $C_1>0$ such that
		\begin{equation} \label{e:net}
			\frac{1}{m(B(x,r))} \one_{ \{d(x,y) <r\}} \le C_1 \sum_{n \in N} \frac{1}{m(B(n,2r))} \one_{B(n,2r)}(x) \one_{B(n,2r)}(y)   
		\end{equation}
		for all $x,y \in X$. Therefore for any $r>0$, we have 
		\begin{align*}
			\MoveEqLeft{\frac{1}{\Psi(r)}\int_X \frac{1}{m(B(x,r))}  \int_{B(x,r)} (f(x)-f(y))^2 \, m(dy) \, m(dx)} & \\
			&\le  \frac{C_1}{2\Psi(r)} \sum_{n \in N} \int_{B(n,2r)} \abs{f(x)-f_{B(n,2r)}}^2\, m(dx) \quad \mbox{(by \eqref{e:net})} \\
			& \le   \frac{C_1C_P \Psi(2r)}{2\Psi(r)}  \sup_{x \in X} \sum_{n \in N} \one_{B(n,2 A_P r)}(x) \sE(f,f) \quad \mbox{(by \hyperlink{pi}{$\operatorname{PI}(\Psi)$})}.
		\end{align*}
		The desired conclusion follows from \eqref{e:reg} and the bounded overlap of the family of balls $\{B(n,2 A_Pr): n \in N\}$.
		\item Let $f \in L^2(X,m)$. Since $(\sE,\sF)$ is conservative, we have by \eqref{e:semigroup}
		\begin{equation} \label{e:bl0}
			\sE(f,f)= \lim_{t \to 0} \frac{1}{2t}\int_X \int_X (f(x)-f(y))^2 p_t(x,y) \,m(dy)\,m(dx),
		\end{equation}
		where $p_t$ denotes the heat kernel.  Following \cite[Proof of Theorem 4.2]{GHL03} , we set
		\begin{align*}
			A(f,t,r) &:= \frac{1}{2t}\int_X \int_{X\setminus B(x,r)} (f(x)-f(y))^2 p_t(x,y) \,m(dy)\,m(dx), \\
			B(f,t,r) &:= \frac{1}{2t}\int_X \int_{ B(x,r)} (f(x)-f(y))^2 p_t(x,y) \,m(dy)\,m(dx),
		\end{align*}
		so that \begin{equation} \label{e:blh}
			\sE(f,f)= \lim_{t \downarrow 0} \left( A(f,t,r)+B(f,t,r) \right) \quad \mbox{for all $f \in L^2, r>0$}.
		\end{equation} 
		Define 
		\[
		W(f,r):= \frac{1}{\Psi(r)}\int_X  \frac{1}{m(B(x,r))}\int_{B(x,r)} (f(x)-f(y))^2 \,m(dy) \,m(dx),  
		\]
		and $W(f):= \limsup_{r \downarrow 0} W(f,r)$.
		First we claim that for any $f \in L^2(X,m), r >0$, we have 
		\begin{equation} \label{e:bl1}
			\limsup_{t \downarrow 0} A(f,t,r)=0.
		\end{equation}
%		This follows from by following an argument similar to \cite[Proof of Theorem 4.2]{GHL03} by considering dyadic annuli $B(x,2^{k+1}r) \setminus B(x,2^kr)$ for $k \in \bN_{\ge 0}$ and using \eqref{e:uhke},\eqref{e:Phiscale} and the doubling property of $m$ (Lemma \ref{l:vd}).

		To this end, we estimate $A(f,t,r)$ as  
		\begin{align} \label{e:bl2}
			A(f,t,r) &= \frac{1}{2t} \int_X \int_{B(x,r)^c} (f(x)-f(y))^2  p_t(x,y) \, m(dy) \,m(dx) \nonumber \\
			& \le  \frac{1}{2t}\int_X \int_{B(x,r)^c} 2(f(x)^2+f(y)^2) p_t(x,y) \,m(dy) \,m(dx) \nonumber \\
			& \le \frac{2}{t}\int_X f(x)^2 \int_{B(x,r)^c} p_t(x,y) \,m(dy) \,m(dx) \q \mbox{(by symmetry of $p_t$).}
		\end{align}
		By \eqref{e:uhke}, for $m$-a.e. $x \in X$  and for all $0<t <1 \wedge \Psi(r) \wedge r$,  we have
		\begin{align} \label{e:bl3}
			\lefteqn{\frac{1}{t} \int_{B(x,r)^c} p_t(x,y) \,m(dy) } \nonumber \\&= \sum_{k=0}^\infty \frac{1}{t} \int_{B(x,2^{k+1}r) \setminus B(x,2^k r)} p_t(x,y) \,m(dy) \nonumber \\
			& \le \frac{C_1}{t} \sum_{k=0}^\infty \frac{m(B(x,2^{k+1}r))}{ m(B(x,\Psi^{-1}(t)))} \exp \left( -c_1 t \Phi\left( \frac{2^k r}{t}\right)\right)  \mbox{ (by \eqref{e:uhke},\eqref{e:Phiscale})} \nonumber \\
			& \le C_2 \sum_{k=0}^\infty \frac{2^{k\alpha} r^\alpha}{t^\beta} \exp \left( - c_2 \frac{2^{k (\gamma+1)}r^{\gamma+1}}{t^\gamma}\right) \q \mbox{(by \eqref{e:Phiscale}, \eqref{e:vd}, \eqref{e:reg}),}
		\end{align}
		where $C_1, C_2, \alpha, \beta, \gamma >0$ and do not depend on $x, r, t$.
		Combining \eqref{e:bl2}, \eqref{e:bl3} and letting $t \downarrow 0$, we obtain \eqref{e:bl1}. 
		
		Without loss of generality, it suffices to consider the case $W(f)<\infty$. Let $\eps>0$ be arbitrary, we choose $r_0 >0$ (depending on $f$ and $\eps$) such that 
		\begin{equation} \label{e:bl4}
			W(f,r_0)-\epsilon \le	W(f) \le   \sup_{0 < r \le r_0} W(f,r) \le W(f,r_0) + \eps.
		\end{equation}
	Similarly we follow \cite[Proof of Theorem 4.2]{GHL03} by considering dyadic annuli as above and using \eqref{e:uhke}, \eqref{e:Phibnd}, \eqref{e:Phiscale}, the doubling property of $m$ to estimate 
		\begin{align}
			B(f,t,r_0) &= \frac{1}{2t} \sum_{k=0}^{k_0} \int_X \int_{B(x,2^{-k}r_0) \setminus B(x,2^{-k-1}r_0) } (f(x)-f(y))^2 p_t(x,y)\, m(dy)\,m(dx) \nonumber \\
			& \quad + \frac{1}{2t} \int_X \int_{ B(x,2^{-k_0-1}r_0) } (f(x)-f(y))^2 p_t(x,y)\, m(dy)\,m(dx)  \label{e:bl5}
		\end{align}
		by choosing $k_0 \in \bN$ be the largest integer such that $2^{-k_0} r_0> \Psi^{-1}(t)$. The last term above can estimated   using \eqref{e:uhke}, \eqref{e:Phibnd}, \eqref{e:Phiscale} and the volume doubling property by 
		\begin{equation} \label{e:bl6}
			\frac{1}{2t} \int_X \int_{ B(x,2^{-k_0-1}r_0) } (f(x)-f(y))^2 p_t(x,y)\, m(dy)\,m(dx) \lesssim W(f, \Psi^{-1}(t)) \lesssim W(f,r_0)+\epsilon.
		\end{equation}
		For $0 \le k \le k_0$, we have  (by using \eqref{e:uhke}, \eqref{e:reg}, \eqref{e:Phibnd}, \eqref{e:Phiscale}, \eqref{e:vd}, \eqref{e:bl4})
		\begin{align} \label{e:bl7}
			\MoveEqLeft{\frac{1}{2t} }\int_X \int_{B(sx,2^{-k}r_0) \setminus B(x,2^{-k-1}r_0) } (f(x)-f(y))^2 p_t(x,y)\, m(dy)\,m(dx) \nonumber \\
			& \lesssim   W(f,2^{-k} r_0)\frac{\Psi(2^{-k  r_0})m(B(x,2^{-k} r_0))}{ \Psi(2^{-k_0 r_0})m(B(x,2^{-k_0} r_0))} \exp \left( -c_1 t \Phi\left(  \frac{2^{-k} r_0}{t}\right)\right) \nonumber \\
			& \lesssim     W(f,2^{-k} r_0)\frac{\Psi(2^{-k  r_0})m(B(x,2^{-k} r_0))}{ \Psi(2^{-k_0 r_0})m(B(x,2^{-k_0} r_0))} \exp \left( -c_2 t \Phi\left(  \frac{2^{-(k-k_0)} \Psi^{-1}(t)}{t}\right)\right) \nonumber \\
			& \lesssim   W(f,2^{-k} r_0) 2^{(k_0-k) \alpha}\exp \left( -c_3 2^{(k_0-k) \gamma} \right) \nonumber \\
			&\lesssim  2^{(k_0-k) \alpha}\exp \left( -c_3 2^{(k_0-k) \gamma} \right) (W(f,r_0) + \eps),
		\end{align}
		where $C_3, \alpha, \gamma, c_3$ depends only on the constants associated with \eqref{e:uhke}, \eqref{e:reg}, \eqref{e:Phibnd}, \eqref{e:Phiscale}, \eqref{e:vd}. Combining \eqref{e:bl5}, \eqref{e:bl6} and \eqref{e:bl7}, we obtain 
		\begin{equation} \label{e:bl8}
			B(f,t,r_0) \lesssim (W(f,r_0) + \epsilon) \left( 1+ \sum_{l=0}^\infty 2^{l \alpha } \exp(-c_3 2^{l \gamma})\right)  \le C_3 (W(f,r_0) + \epsilon)  
		\end{equation}
		for any $t \in (0,\Psi(r_0)/2)$ (note that $k_0 \to \infty$ as $t \downarrow 0$). 
		Using \eqref{e:bl8}, \eqref{e:bl4} and letting  $\epsilon \downarrow 0$, we obtain 
		\[
		\limsup_{t \downarrow 0} B(f,t,r_0) \le C_3 W(f).
		\]
		This along with \eqref{e:blh}, \eqref{e:bl0}  and \eqref{e:bl1}  implies the desired conclusion.
		\item  
		If suffices to consider the case $f \in L^2(X,m) \cap L^\infty(X,m) \cap \sF$ since any function $f \in \sF$ can be approximated by a sequence of bounded functions as given in Definition \ref{d:EnergyMeas}. 
		For any $\delta>0$ consider a function $\phi \in C_c(X) \cap \sF$ such that $\phi \equiv 1$ on $K$ and $\phi \equiv 0$ on $K_\delta^c$ (such a function exists by the regularity of the Dirichlet form). 
		We estimate the energy measure of $K$ by 
		\begin{align} \label{e:bl9}
			\MoveEqLeft{	\Gamma(f,f)(K)  \le \int \phi \,d\Gamma(f,f) = \sE(\phi f, f)- \frac{1}{2} \sE(\phi,f^2) }\nonumber \\
			& \stackrel{\eqref{e:bl0}}{=} \lim_{t \downarrow 0} \frac{1}{2t} \Bigg( \Bigg.    \int  \int  (\phi(x)f(x)-\phi(y)f(y))(f(x)-f(y))p_t(x,y) \,m(dy)\,m(dx) \nonumber \\
			& \qquad - \frac{1}{2}\int \int  (\phi(x)-\phi(y))(f(x)^2-f(y)^2)p_t(x,y) \,m(dy)\,m(dx)  \Bigg. \Bigg) \nonumber \\
			& = \lim_{t \downarrow 0} \frac{1}{4t} \int_X\int_X (\phi(x)+\phi(y)) (f(x)-f(y))^2 p_t(x,y)\,m(dy)\,m(dx) \nonumber \\
			& = \lim_{t \downarrow 0} \frac{1}{2t} \int_X\int_X  \phi(x)(f(x)-f(y))^2 p_t(x,y)\,m(dy)\,m(dx) \quad \mbox{(by symmetry)} \nonumber \\
			& \le \lim_{t \downarrow 0} \frac{1}{2t} \int_{K_\delta}\int_X   (f(x)-f(y))^2 p_t(x,y)\,m(dy)\,m(dx).
		\end{align}
		The desired conclusion follows by breaking the inner integral  over $X$ in \eqref{e:bl9} into $B(x,r), B(x,r)^c$  and then following the same argument as in (b). \qedhere
	\end{enumerate}
\end{proof}
\section{The extension map and its scale-invariant boundedness} \label{s:extend}
In this section, we define the extension map using the reflection map and study its scale-invariant and global boundedness properties. 

\subsection{Poincar\'e inequality on uniform domains} \label{s:poin}
In this subsection, we show that uniform domains inherit Poincar\'e inequality (see Theorem \ref{t:pi}). The strategy of the proof is essentially the same as \cite{GS}. 
It is also possible to adapt the slightly different approach presented in \cite[Theorem 4.4]{BS} based on an argument of \cite[Proof of Theorem 1]{HK} that uses weak-type estimates. 
Although our approach is a straightforward adaptation of \cite{GS}, we present the proof because some of the estimates in the proof play an important role later in bounds for the extension operator (see the proof of Proposition \ref{p:eqpi}).

We recall a lemma whose proof is an exercise in duality and maximal inequality. The proof can be found in \cite[Lemma 5.3.12]{Sal}.
\begin{lemma} \label{l:dilate}(\cite[Exercise 2.10]{Hei}, \cite[Lemma 3.25]{GS})
	Let $m$ be a doubling measure on $(Y,d)$. Let $\{B(x_i,r_i): i \in I\}$ be a countable collection of balls and let $a_i \ge 0$ for all $i \in I$. Let $\lambda \ge 1$ and $1<p< \infty$. Then there exist  $C \ge 1$ depending only on the doubling constant, $\lambda$, $p$ such that
	\[
	\int_Y  \left(\sum_{i \in I} a_i \one_{B(x_i,\lambda r_i)}\right)^p \,dm \le C \int_Y  \left(\sum_{i \in I} a_i \one_{B(x_i,  r_i)}\right)^p \,dm.
	\]
\end{lemma}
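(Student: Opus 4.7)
I would prove this $L^p$ dilation inequality by duality combined with the Hardy--Littlewood maximal theorem on doubling metric measure spaces. The left-hand side is a quantity of the form $\|F\|_p^p$ with $F = \sum_i a_i \one_{B(x_i,\lambda r_i)}$, so it equals $\sup_g \int_Y Fg\, dm$ with $g \ge 0$, $\|g\|_q \le 1$, where $q$ is the conjugate exponent. Hence it suffices to prove
\[
\sum_{i \in I} a_i \int_{B(x_i,\lambda r_i)} g\,dm \le C \left\|\sum_{i \in I} a_i \one_{B(x_i, r_i)}\right\|_p \|g\|_q
\]
uniformly in such $g$.

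The key geometric observation is that for any $y \in B(x_i, r_i)$, we have $B(x_i, \lambda r_i) \subset B(y, (\lambda+1) r_i)$. Using this inclusion together with the doubling property of $m$, the average of $g$ over $B(x_i,\lambda r_i)$ is bounded by $C Mg(y)$, where $M$ is the uncentered Hardy--Littlewood maximal operator on $(Y,d,m)$ and $C$ depends only on $\lambda$ and the doubling constant. Multiplying by $m(B(x_i,\lambda r_i)) \le C' m(B(x_i,r_i))$ (again by doubling) and rewriting the product $m(B(x_i, r_i)) \cdot C Mg(y)$ as $\int_{B(x_i,r_i)} C Mg \, dm$ yields
\[
\int_{B(x_i,\lambda r_i)} g \, dm \le C'' \int_{B(x_i, r_i)} Mg \, dm \qquad \text{for each } i \in I.
\]

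Summing over $i$, interchanging sum and integral, and applying H\"older's inequality gives
\[
\sum_{i \in I} a_i \int_{B(x_i,\lambda r_i)} g\,dm \le C'' \int_Y \left(\sum_{i \in I} a_i \one_{B(x_i, r_i)}\right) Mg\,dm \le C'' \left\|\sum_{i \in I} a_i \one_{B(x_i, r_i)}\right\|_p \|Mg\|_q .
\]
The remaining step is to absorb $\|Mg\|_q$ using the $L^q$-boundedness of $M$; this is where the hypothesis $p > 1$ (equivalently $q < \infty$) is essential. The only nontrivial input is the Hardy--Littlewood maximal inequality on doubling metric measure spaces, which follows from the Vitali-type covering argument and holds with constant depending only on the doubling constant and $q$. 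Since the sum defining $F$ is countable, monotone convergence justifies reducing to finite sums and exchanging sum and integral throughout. I do not anticipate any genuine obstacle; the proof is entirely a duality-plus-maximal-function argument, the one care point being to verify that the doubling property of $m$ suffices (with no additional regularity on $(Y,d)$) to give both the volume comparison $m(B(x_i,\lambda r_i)) \lesssim m(B(x_i,r_i))$ and the pointwise domination of the dilated average by $Mg$.
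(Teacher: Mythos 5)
Your proof is correct and is precisely the duality-plus-maximal-function argument the paper alludes to (and attributes to Saloff-Coste's Lemma 5.3.12 and Heinonen's Exercise 2.10); there is no genuine gap. The only blemishes are cosmetic: the duality identity should read $\|F\|_p=\sup_{\|g\|_q\le1}\int Fg\,dm$ rather than $\|F\|_p^p$ (the lemma then follows by raising both sides of $\|F\|_p\le C\|G\|_p$ to the $p$th power), and since your $M$ is uncentered, the pointwise bound $\fint_{B(x_i,\lambda r_i)}g\,dm\le Mg(y)$ is immediate from $y\in B(x_i,\lambda r_i)$ (as $\lambda\ge1$) without invoking the inclusion $B(x_i,\lambda r_i)\subset B(y,(\lambda+1)r_i)$, so doubling is used only for the volume comparison $m(B(x_i,\lambda r_i))\lesssim m(B(x_i,r_i))$ and the $L^q$-boundedness of $M$.
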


The following lemma controls the difference in the average of nearby balls using the energy measure. 
It follows by an application of the Poincar\'e inequality.
\begin{lemma} (Cf. \cite[Lemma 3.22]{GS}) \label{l:pineighbor}
	Let $(X,d,m,\sE,\sF)$ be an MMD space that satisfies the Poincar\'e inequality \hyperlink{pi}{$\on{PI}(\Psi)$}	and let $m$ be a doubling measure on $X$. Let $U$ be a  $A$-uniform domain for some $A \ge 1$. Let $\eps \in (0,1/4)$ be such that $A_P \left(3+ 6 \frac{1+4\eps}{1-2\eps}\right)  \frac{\eps}{1+\eps} <1$, where $A_P$ is the constant in \hyperlink{pi}{$\on{PI}(\Psi)$}. Let $\mathfrak{R}$ be an $\epsilon$-Whitney cover of $U$.
	Let $B_U(x_i,r_i), B_U(x_j,r_j) \in \mathfrak{R}$ be such that 
	$B_U(x_i,3r_i) \cap B_U(x_j,3r_j) \neq \emptyset$. 
	Then
	\[
	\abs{f_{B_U(x_i,3r_i)}- f_{B_U(x_j,3r_j)}}^2 \le C \frac{\Psi(r_i)}{m(B(x_i,r_i))} \int_{B(x_i, A_P Lr_i)} \,d\Gamma(f,f),  
	\]
	for all $f \in \sF_{\loc}(B(x_i, A_P Lr_i))$,
	where $L= \left(3+ 6 \frac{1+4\eps}{1-2\eps}\right) \le 27$.
\end{lemma}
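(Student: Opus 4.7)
\textbf{Proof proposal for Lemma \ref{l:pineighbor}.}

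The plan is a standard telescoping argument: dominate both Whitney balls by a single larger ball $B(x_i, Lr_i)$ on which the Poincar\'e inequality can be applied, and bound each of the two average differences using Cauchy--Schwarz plus doubling.

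First I would record geometric inclusions. Since $B_U(x_i,3r_i)\cap B_U(x_j,3r_j)\neq\emptyset$, Proposition~\ref{p:whitney}(c) with $\lambda=3$ gives the radius comparison $\frac{1-2\eps}{1+4\eps}r_j\le r_i\le \frac{1+4\eps}{1-2\eps}r_j$ and $d(x_i,x_j)<3r_i+3r_j$. Combining these yields $B_U(x_j,3r_j)\subset B_U(x_i,Lr_i)$ for the explicit constant $L=3+6\frac{1+4\eps}{1-2\eps}$ appearing in the statement. The smallness hypothesis $A_P L\frac{\eps}{1+\eps}<1$ on $\eps$ gives $A_P L r_i<\delta_U(x_i)$, so $B(x_i, A_PLr_i)\subset U$; consequently every relevant $B_U(\cdot,\cdot)$ coincides with the ambient ball $B(\cdot,\cdot)$, and we can freely apply the Poincar\'e inequality \hyperlink{pi}{$\on{PI}(\Psi)$} on $B(x_i,Lr_i)$ with the ambient energy measure $\Gamma(f,f)$, since $f\in\sF_{\loc}(B(x_i, A_PLr_i))$ is precisely the hypothesis needed.

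Next, by the triangle inequality
\[
\abs{f_{B(x_i,3r_i)}-f_{B(x_j,3r_j)}}^2\le 2\abs{f_{B(x_i,3r_i)}-f_{B(x_i,Lr_i)}}^2+2\abs{f_{B(x_j,3r_j)}-f_{B(x_i,Lr_i)}}^2.
\]
For each term I would use Cauchy--Schwarz and the fact that both $B(x_i,3r_i)$ and $B(x_j,3r_j)$ sit inside $B(x_i,Lr_i)$:
\[
\abs{f_{B(x_k,3r_k)}-f_{B(x_i,Lr_i)}}^2\le \frac{m(B(x_i,Lr_i))}{m(B(x_k,3r_k))}\fint_{B(x_i,Lr_i)}(f-f_{B(x_i,Lr_i)})^2\,dm \qquad (k\in\{i,j\}).
\]
The volume ratio is bounded by a constant depending only on $\eps$ and the doubling constant of $m$: for $k=i$ this is direct doubling, while for $k=j$ one applies \eqref{e:vd} together with the radius comparison $r_j\asymp r_i$ and the bound $d(x_i,x_j)\lesssim r_i$ obtained above.

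Finally I would apply \hyperlink{pi}{$\on{PI}(\Psi)$} on $B(x_i,Lr_i)$:
\[
\fint_{B(x_i,Lr_i)}(f-f_{B(x_i,Lr_i)})^2\,dm\le \frac{C_P\Psi(Lr_i)}{m(B(x_i,Lr_i))}\int_{B(x_i,A_P L r_i)}d\Gamma(f,f).
\]
Then \eqref{e:reg} replaces $\Psi(Lr_i)$ by a constant multiple of $\Psi(r_i)$ (with $L\le 27$, the constant depends only on $\eps$ and the structural constants of $\Psi$), and one more application of doubling replaces $m(B(x_i,Lr_i))$ by a constant multiple of $m(B(x_i,r_i))$. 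Collecting all constants yields the desired bound. The argument has no genuine obstacle; the only point requiring attention is bookkeeping the smallness condition on $\eps$ so that the balls of radius up to $A_PLr_i$ remain inside $U$, which is exactly what the hypothesis $A_P(3+6\frac{1+4\eps}{1-2\eps})\frac{\eps}{1+\eps}<1$ ensures.
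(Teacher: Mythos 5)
Your proposal is correct and follows essentially the same route as the paper: dominate both Whitney balls by a common enlargement $B(x_i,Lr_i)$, check via the smallness of $\eps$ that $B(x_i,A_PLr_i)\subset U$ so the ambient Poincar\'e inequality and energy measure apply unambiguously, and then reduce the difference of averages to the variance over the big ball using doubling. The only cosmetic difference is that the paper collapses the difference of averages via the Jensen double-average bound $\lvert f_{B_1}-f_{B_2}\rvert^2\le\fint_{B_1}\fint_{B_2}\lvert f(y)-f(z)\rvert^2\,dm\,dm$, whereas you split via the triangle inequality and apply Cauchy--Schwarz to each piece; these are equivalent and equally standard.
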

\begin{proof}
	By Proposition \ref{p:whitney}(c), we have $B_U(x_j, 3r_j ) \subset B_U(x_i, 3r_i+6r_j) \subset B_U(x_i,L r_i)$.
	By the volume doubling property
	\[
	m(B_U(x_i,3r_i)) \asymp  m(B_U(x_j,3r_j)) \asymp m(B_U(x_i,Lr_i) \asymp m(B(x_i,r_i)).
	\]
	Hence, we have
	\begin{align*}
		 {\abs{f_{B_U(x_i,3r_i)}- f_{B_U(x_j,3r_j)}}^2} 
		&   \le  \fint_{B_U(x_j,3r_j)} \fint_{B_U(x_i,3r_i)} \abs{f(z)-f(y)}^2\,m(dy)\,m(dz) \\
		&   \lesssim   \fint_{B_U(x_i,Lr_i)} \fint_{B_U(x_i, Lr_i)} \abs{f(z)-f(y)}^2\,m(dy)\,m(dz) \\
		& \lesssim  \frac{1}{m(B(x_i,r_i))}\int_{B(x_i,Lr_i)} \abs{f-f_{B(x_i,Lr_i)}}^2 \,dm\\
		& \lesssim \frac{\Psi(r_i)}{m(B(x_i,r_i))} \int_{B(x_i, A_P Lr_i)} \,d\Gamma(f,f).
	\end{align*}
\end{proof}

The following Poincar\'e inequality is the main result of this subsection.
\begin{theorem} \label{t:pi}
	Let $(X,d,m,\sE,\sF)$ be an MMD space that satisfies the Poincar\'e inequality \hyperlink{pi}{$\on{PI}(\Psi)$}	and let $m$ be a doubling measure on $X$. Let $U$ be a  $A$-uniform domain for some $A \ge 1$. Then there exist  
	constants $C_{U},A_U\ge 1$ such that 
	for all $(x,r)\in X\times(0,\infty)$ and all $f \in \mathcal{F}_{\loc}(B_U(x,A_Ur))$,
	\begin{equation} \tag*{$\operatorname{PI}(\Psi)$}
		\int_{B_U(x,r)} (f -   f_{B_U(x,r)})^2 \,dm  \le C_{U} \Psi(r) \int_{B_U(x,A_U r)}d\Gamma(f,f),
	\end{equation}
	where $f_{B_U(x,r)}:= m(B_U(x,r))^{-1} \int_{B_U(x,r)} f\, dm$.
	
\end{theorem}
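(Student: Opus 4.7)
The plan is to adapt the Whitney chaining strategy of Gyrya--Saloff-Coste~\cite{GS} to the sub-Gaussian setting, using the Whitney geometry developed in Section~\ref{s:geometry}. We may assume $x \in \overline{U}$, since otherwise $B_U(x,r)$ is empty. When $r < \delta_U(x)/2$ the set $B_U(x,r)$ coincides with the ambient ball $B(x,r) \subset U$, and the conclusion is immediate from \hyperlink{pi}{$\operatorname{PI}(\Psi)$} applied in $X$. So focus on the case $r \ge \delta_U(x)/2$. Fix an $\epsilon$-Whitney cover $\mathfrak{R}$ of $U$ with $\epsilon \in (0,1/14)$ chosen small enough that the hypotheses of Lemma~\ref{l:pineighbor} are satisfied, let $B_0 \in \mathfrak{R}(B_U(x,r))$ be a central ball as furnished by Lemma~\ref{l:central}, and recall that $\int_{B_U(x,r)}(f-c)^2\,dm$ is minimized at $c = f_{B_U(x,r)}$; thus it suffices to bound $\int_{B_U(x,r)}(f - f_{3 B_0})^2\,dm$.

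For each $y \in B_U(x,r)$, Lemma~\ref{l:central} supplies some $D(y) \in \mathfrak{R}(B_U(x,r))$ with $y \in 3 D(y)$, and Lemma~\ref{l:string} supplies a chain of Whitney balls $\{B_0 = B_0^{D},B_1^{D},\dots,B_{l(D)}^{D} = D\}$ with $D=D(y)$. The telescoping
\[
\abs{f(y) - f_{3 B_0}} \le \abs{f(y) - f_{3 D}} + \sum_{j=1}^{l(D)} \abs{f_{3 B_j^{D}} - f_{3 B_{j-1}^{D}}}
\]
splits the proof into a \emph{local term} and a \emph{chain term}. The local term is handled by applying \hyperlink{pi}{$\operatorname{PI}(\Psi)$} to each Whitney ball $3D$ (which is a genuine ball in $X$ contained in $U$), then summing over $D \in \mathfrak{R}(B_U(x,r))$ via the bounded overlap of Proposition~\ref{p:whitney}(d). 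The bound $r_D \le \tfrac{2\epsilon}{1-2\epsilon}r$ from Lemma~\ref{l:central} together with the monotonicity of $\Psi$ yields $\Psi(r_D) \lesssim \Psi(r)$, while Lemma~\ref{l:string}(b) guarantees $A_P \cdot 3D \subset B_U(x, A_U r)$ for $A_U$ large enough; hence the local term is bounded by $C\Psi(r) \int_{B_U(x,A_U r)} d\Gamma(f,f)$.

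For the chain term, Lemma~\ref{l:pineighbor} applied to each adjacent pair $(B_j^{D},B_{j-1}^{D})$ gives
\[
\abs{f_{3 B_j^{D}} - f_{3 B_{j-1}^{D}}}^2 \lesssim \frac{\Psi(r_j^D)}{m(B(x_j^D,r_j^D))} \int_{B(x_j^D,\, A_P L r_j^D)} d\Gamma(f,f).
\]
A weighted Cauchy--Schwarz absorbs the chain length: the key geometric input $d(x_j^D,x_D) \le C_1 r_j^D$ from Lemma~\ref{l:string}(b) forces the radii $r_j^D$ to grow at least geometrically as $j$ decreases from $l(D)$, so sums of the form $\sum_j (r_j^D)^{\pm\alpha}$ are controlled by their extremal terms. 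After integrating over $3D \cap B_U(x,r)$, we reorganize the double sum $\sum_D \sum_j$ as a single sum over intermediate balls $B \in \mathfrak{R}_1(B_U(x,r))$. For each such $B$, Lemma~\ref{l:string}(b) implies that any $D$ whose chain passes through $B$ satisfies $D \subset (2C_1+1)B$, and the disjointness of the Whitney balls together with the doubling property of $m$ (Proposition~\ref{p:whitney}(d)) bounds the number of such $D$ uniformly. Lemma~\ref{l:dilate} then absorbs the remaining dilations, and a final application of bounded overlap produces the estimate $C\Psi(r)\int_{B_U(x,A_U r)} d\Gamma(f,f)$ for the chain term.

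The main obstacle is precisely this reindexing step for the chain sum: controlling how many chains pass through a common intermediate Whitney ball, and simultaneously absorbing the unbounded chain length via the weighted Cauchy--Schwarz. Both issues are resolved by the radius-comparison and distance estimates of Lemma~\ref{l:string}(b)--(c), after which the remaining work is doubling/bounded-overlap bookkeeping. Combining the local and chain estimates yields the conclusion with $A_U$ equal to a fixed multiple of $A_P L C_0$, where $C_0,L$ are the constants from Lemmas~\ref{l:string} and~\ref{l:pineighbor}.
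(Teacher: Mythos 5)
Your overall strategy is sound and takes a genuinely different route from the paper's. You do a \emph{pointwise} telescoping along chains followed by a weighted Cauchy--Schwarz that exploits the geometric growth of Whitney radii (a classical Jones-style observation), and then count chains by comparing measures. The paper instead integrates over each Whitney ball $3D$ first, telescopes the constants $f_{3B_j^D}$ multiplied by the indicator $\one_D\one_{C_2 B_j^D}$, and then exploits the disjointness trick $\sum_D \one_D \le 1$ together with Lemma~\ref{l:dilate} to remove the dilation $C_2$. Both work; your version is closer to Jones's original argument, the paper's is a more axiomatic use of the dilation lemma in place of the geometric-growth/Cauchy--Schwarz step. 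Two points need repair, though.

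First, your easy case $r < \delta_U(x)/2$ is not quite right. To apply the ambient \hyperlink{pi}{$\operatorname{PI}(\Psi)$} to $f \in \mathcal{F}_{\loc}(B_U(x,A_U r))$ you need $B(x,A_P r) \subset U$, i.e.\ $A_P r \le \delta_U(x)$, and to invoke Lemma~\ref{l:central} in the other case you need $\delta_U(x) \le r$. For $A_P > 2$ the threshold $\delta_U(x)/2$ leaves the intermediate regime $\delta_U(x)/A_P \le r < \delta_U(x)$ uncovered; the paper resolves this by a third case in which $B_U(x,A_P r)$ is used in place of $B_U(x,r)$ and the main case is applied at the enlarged radius. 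This is easy to patch but is a genuine gap as written.

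Second, the assertion that the disjointness of the Whitney balls and doubling \textquotedblleft bounds the number of such $D$ uniformly\textquotedblright\ is false: the balls $D$ satisfying $D \subset (2C_1+1)B$ for a fixed intermediate $B$ can be arbitrarily numerous (they may be much smaller than $B$ and accumulate near $\partial U$). What is true, and what your argument actually needs once you have integrated over $3D$, is the \emph{measure} bound $\sum_{D : B \in \mathbb{S}(D)} m(3D) \lesssim m(B)$, which follows from disjointness of $\{D\}$, the inclusion $D \subset (2C_1+1)B$, and doubling. With the counting claim replaced by this measure comparison (and with the case split fixed), the chain-term estimate goes through: the weighted Cauchy--Schwarz with weights $w_j = (r_j^D)^{\beta_2}$ uses $\sum_j (r_j^D)^{\beta_2} \lesssim r^{\beta_2}$ (boundedly many radii per dyadic scale, which does follow from $d(x_j^D,x_D) \le C_1 r_j^D$ and disjointness), and then $\Psi(r_j)/r_j^{\beta_2} \lesssim \Psi(r)/r^{\beta_2}$ from \eqref{e:reg} closes the estimate without any appeal to Lemma~\ref{l:dilate}, which is in fact not needed in your route.
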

\begin{proof}

	\noindent\textbf{Case 1}: 
	$x \in \overline{U}, r \ge \delta_U(x)$.
	Let $\eps \in (0,1/4)$ be small enough such that $27 A_P \frac{\eps}{1+\eps}<1$ so that the assumption of Lemma \ref{l:pineighbor} is satisfied. Let $\mathfrak{R}$ be an $\epsilon$-Whitney cover of $U$.
	By Lemma \ref{l:central} we can choose a central ball $B_0=B_U(x_0,r_0)$ such that $r_0$ satisfies \eqref{e:bndr0}. 
	By \eqref{e:incRB}, we have
	\begin{align} \label{e:pi1}
		\MoveEqLeft{	\int_{B_U(x,r)} \abs{f-f_{B_U(x,r)}}^2 \,dm } \nonumber\\
		&\le 	\int_{B_U(x,r)} \abs{f-f_{B_U(x_0,3r_0)}}^2 \,dm \nonumber \\
		& \le \sum_{B_U(x_D,r_D) \in \mathfrak{R}(B_U(x,r))}  \int_{B_U(x_D,3r_D)}2  \abs{f-f_{B_U(x_0,3r_0)}}^2 \,dm \quad \mbox{(by \eqref{e:incRB})} \nonumber  \\
		&\le \sum_{B_U(x_D,r_D) \in \mathfrak{R}(B_U(x,r))}  \int_{B_U(x_D,3r_D)}4 \left( \abs{f-f_{B_U(x_D,3r_D)}}^2 \right. \nonumber \\ & \qq\qq\qq\qq\qq\qq\qq\qq \left. +\abs{f_{B_U(x_0,3r_0)}-f_{B_U(x_D,3r_D)}}^2 \right) \, dm. 
	\end{align}
	The first term above can be bounded using \hyperlink{pi}{$\operatorname{PI}(\Psi)$} as
	\begin{equation} \label{e:pi2}
		\int_{B_U(x_D,3r_D)}    \abs{f-f_{B_U(x_D,3r_D)}}^2\, dm \le   C_P \Psi(3r_D)  \int_{B_U(x_D,3A_Pr_D)}d\Gamma(f,f),
	\end{equation}
	where $A_P \ge 1$ be the constant as given in \hyperlink{pi}{$\operatorname{PI}(\Psi)$}.
	For the second term we use Lemma \ref{l:string} as follows.
	By Lemma \ref{l:string}, for any $D=B_U(x_D,r_D) \in \mathfrak{R}(B(x,r))$, there exists a finite collection of distinct balls $\bS(D)= \{B_j^D=B(x_j^D,r_j^D): 0 \le j \le l\}$ such that $B_0^D=B, B_l^D=D$, where $l=l(D)$ that satisfy the properties in Lemma \ref{l:string}(a) and (b). By \eqref{e:wu3}, there exists $C_2>1$ that depends only on $A,\epsilon$ such that 
	\begin{equation} \label{e:pi3}
		D \subset B_U(x_j^D, C_2 r_j^D).
	\end{equation}
	We bound the second term in \eqref{e:pi1}  using Lemma \ref{l:pineighbor} as
	\begin{align} \label{e:pi4}
		\MoveEqLeft{\abs{f_{B_U(x_0,3r_0)}-f_{B_U(x_D,3r_D)}} \one_D} \nonumber \\ & \stackrel{\eqref{e:pi3}}{\le} \sum_{j=1}^{l(D)} \abs{f_{B_U(x_{j-1}^D,3r_{j-1}^D)}-f_{B_U(x^D_j,3r_j^D)}}   \one_D  \one_{B_U(x_j^D, C_2 r_j^D)} \nonumber \\
		& \lesssim \sum_{j=1}^{l(D)} \left( \frac{\Psi(r_j^D)}{m(B_U(x_j^D,r_j^D))} \int_{B(x_j^D,A_PLr_j^D)} d\Gamma(f,f)\right)^{1/2}\one_D  \one_{B_U(x_j^D, C_2 r_j^D)} \nonumber \\
		& \lesssim  \sum_{B_U(x_j,r_j) \in \mathfrak{R}_1(B_U(x,r))}  a_j\one_D  \one_{B_U(x_j , C_2 r_j)},
	\end{align} 
	where $a_j:=\left( \frac{\Psi(r_j )}{m(B_U(x_j,r_j))} \int_{B(x_j,A_PLr_j)} d\Gamma(f,f)\right)^{1/2}$.
	Note that 
	\begin{align} \label{e:pi5}
		\MoveEqLeft{\sum_{B_U(x_D,r_D) \in \mathfrak{R}(B_U(x,r))} \int_{B_U(x_D,3r_D)}    \abs{f_{B_U(x_0,3r_0)}-f_{B_U(x_D,3r_D)}}^2 \, dm} \nonumber \\
		&=  \sum_{B_U(x_D,r_D) \in \mathfrak{R}(B_U(x,r))}  m(B_U(x_D,3r_D))    \abs{f_{B_U(x_0,3r_0)}-f_{B_U(x_D,3r_D)}}^2 \nonumber \\
		& \stackrel{\eqref{e:vd}}{\le} D_0^2     \sum_{B_U(x_D,r_D) \in \mathfrak{R}(B_U(x,r))}  m(B_U(x_D, r_D))    \abs{f_{B_U(x_0,3r_0)}-f_{B_U(x_D,3r_D)}}^2   \nonumber \\
		& \le D_0^2 \int   \sum_{B_U(x_D,r_D) \in \mathfrak{R}(B_U(x,r))}   \abs{f_{B_U(x_0,3r_0)}-f_{B_U(x_D,3r_D)}}^2 \one_{B_U(x_D,r_D)}\, dm \nonumber \\
		& \stackrel{\eqref{e:pi4}}{\lesssim } \int \sum_{D \in \mathfrak{R}(B_U(x,r))} \left( \sum_{B_U(x_j,r_j) \in \mathfrak{R}_1(B_U(x,r))} a_j \one_D  \one_{B_U(x_j, C_2 r_j)} \right)^2 \,dm \nonumber \\
		&\lesssim \int \left( \sum_{D \in \mathfrak{R}(B_U(x,r))} \one_{D} \right) \left( \sum_{B_U(x_j,r_j) \in \mathfrak{R}_1(B_U(x,r))} a_j  \one_{B_U(x_j, C_2 r_j)} \right)^2 \,dm.
	\end{align}
	Combining $A_P L > \epsilon^{-1}$, \eqref{e:wu2} and Proposition \ref{p:whitney}(d), there exist $C,C'>0$ such that 
	\begin{equation} \label{e:pi6}
	\sum_{B_U(x_j,r_j) \in \mathfrak{R}_1(B_U(x,r))} \one_{B_U(x_j,A_PLr_j)} \le C' \one_{B_U(x,Cr)}
	\end{equation}
	By Definition \ref{d:whitney}(a)   the balls in $\mathfrak{R}$ are disjoint and hence $\sum_{D \in \mathfrak{R}(B_U(x,r))} \one_D \le 1$. 
	Therefore by \eqref{e:pi5} for any $f \in \sF_{\loc}(B_U(x,Cr))$
	\begin{align} \label{e:pi7}
		\MoveEqLeft[6]{\sum_{B_U(x_D,r_D) \in \mathfrak{R}(B_U(x,r))} \int_{B_U(x_D,3r_D)}    \abs{f_{B_U(x_0,3r_0)}-f_{B_U(x_D,3r_D)}}^2 \, dm} \nonumber \\
		&\lesssim  \int \left( \sum_{B_U(x_j,r_j) \in \mathfrak{R}_1(B_U(x,r))} a_j  \one_{B_U(x_j, C_2 r_j)} \right)^2 \,dm  \nonumber  \\
		&\lesssim  \int \left( \sum_{B_U(x_j,r_j) \in \mathfrak{R}_1(B_U(x,r))} a_j  \one_{B_U(x_j,  r_j)} \right)^2 \,dm  \quad \mbox{(by Lemma \ref{l:dilate})} \nonumber \\
		&\lesssim  \sum_{B_U(x_j,r_j) \in \mathfrak{R}_1(B_U(x,r))}  \Psi(r_j) \int_{B(x_j,A_PLr_j)} d\Gamma(f,f) \nonumber \\
		&\stackrel{\eqref{e:wu2}}{\lesssim} \Psi(r) \sum_{B_U(x_j,r_j) \in \mathfrak{R}_1(B_U(x,r))}   \int_{B(x_j,A_PLr_j)} d\Gamma(f,f) \nonumber \\ 
		&\lesssim  \Psi(r) \int_{B_U(x,C r)} d\Gamma(f,f) \quad \mbox{(by \eqref{e:pi6}).}  
	\end{align}
	Combining the above estimate with \eqref{e:pi1}, \eqref{e:pi2} and \eqref{e:pi6}, we obtain the desired Poincar\'e inequality in this case.
	
	\noindent\textbf{Case 2}:  $x \in \overline{U}, A_P r \le \delta_U(x)$. In this case we have the desired Poincar\'e inequality by \hyperlink{pi}{$\on{PI}(\Psi)$} if we choose $A_U \ge A_P$.
	
	\noindent\textbf{Case 3}:   $x \in U,  A_P^{-1} \delta_U(x) < r <\delta_U(x)$.
	By considering the ball $B_U(x,A_Pr)$ and using case 1, we obtain Poincar\'e inequality by choosing $A_U=A_P C$.
\end{proof}
\begin{remark} \label{r:central2}
	As observed in Remark \ref{r:central1}, the assumption that $B_0= B_U(x_0,r_0)\in \mathfrak{R}(B_U(x,r))$ is central can be replaced with the condition that $r_0 \ge c_0 r$ for some $c_0 >0$ for the proof of \eqref{e:pi7}.  This follows from Remark \ref{r:central1} as Lemma \ref{l:string} still holds in this setting. 
\end{remark}
\subsection{Extension map and its boundedness} \label{ss:extend}
In this section, we define the extension map $E_Q:L^2(U) \to L^2(X)$ and obtain some of its basic properties.  % In what follows, we use the notation  $r(B)$ to denote the radius of a ball $B= B(x_B,r(B))$. 
We will often make the following assumptions.
\begin{assumption} \label{a:main}
	Let   $(X,d,m,\sE,\sF)$ be an MMD space that satisfies the heat kernel estimate \hyperlink{hke}{$\on{HKE(\Psi)}$} for some scale function $\Psi$ and let $m$ be a doubling measure. Hence by Theorem \ref{t:hke}, $(X,d,m,\sE,\sF)$ satisfies \hyperlink{pi}{$\on{PI(\Psi)}$} and \hyperlink{cs}{$\on{CS(\Psi)}$} (and hence also  \hyperlink{cap}{$\on{cap}(\Psi)_\le$}).  Let $U$ be a uniform domain in $(X,d)$. Let $\eps>0$ be such that 
	\[
	27 A_P \eps < 1,
	\]
	where $A_P \ge 1$ be a constant such that  \hyperlink{pi}{$\on{PI(\Psi)}$}  holds. 
	Let $\mathfrak{R}, \mathfrak{S}$ denote $\epsilon$-Whitney covers of $U$ and $V:=(U^c)^\circ= (\overline U)^c$ respectively.
	Let $Q:   \wt{\mathfrak{S}} \to \mathfrak{R}$ be as given in Proposition \ref{p:reflect}. 
	Let $\{\psi_{B}: B \in \mathfrak{S}\}$ be a partition of unity of $V$ with controlled energy as given in Lemma \ref{l:partition}; that is,
	\[
	\sE(\psi_B,\psi_B) \lesssim \frac{m(B)}{\Psi(r(B))}.
	\]
	We define the extension map $E_Q: L^2(U,m) \to L^2(X,m)$ as
	\begin{equation} \label{e:EQ}
		E_Q( f) (x) = \begin{cases}
			f(x) &\mbox{if $x \in U$,} \\
			\sum_{B \in \wt{ \mathfrak{S}} }  \psi_B(x) \fint_{3 Q(B)} f \,dm & \mbox{otherwise},
		\end{cases}
	\end{equation}
	where $3Q(B)$ denotes the ball with the same center but three times the radius of that of $Q(B)$. In the case $V = \emptyset$, we interpret the above sum over an empty-set in \eqref{e:EQ} as $0$.
\end{assumption}
Recall from \eqref{e:defSB} the notation
\begin{equation} \label{e:locS}
	{\mathfrak{S}}(B(\xi,r))= \set{B_V(y,s) \in \mathfrak{S}: B_V(y,6s) \cap B(\xi,r)\neq \emptyset }, \quad  \wt{\mathfrak{S}}(B(\xi,r))= {\mathfrak{S}}(B(\xi,r)) \cap \wt{\mathfrak{S}}
\end{equation}
so that 
$E_Q f \equiv  \sum_{B \in \wt{\mathfrak{S}}(B(\xi,r)) }  \psi_B(x) \fint_{3 Q(B)} f \,dm$ on $B(\xi,r) \setminus U$ (since $\psi_B$ is supported on $6B$).

At this point, it is not clear why $E_Q (f)$ defined in \eqref{e:EQ} belong to $L^2(X,m)$ whenever $f \in L^2(U,m)$.
The following lemma shows that the extension operator enjoys a \emph{scale invariant}   boundedness property with respect to $L^2$ norm.
By \eqref{e:bdloc} below,  $E_Q$ can be viewed as a bounded operator from $L^2(B(\xi,A_1r) \cap U)$ to $L^2(B(\xi,r))$ where the bound is independent of $r>0$ and $\xi \in \partial U$. This motivates our terminology \emph{scale invariant} boundedness.
\begin{lemma} \label{l:l2b}
	Let $m$ be a doubling measure on $(X,d)$ and let $U$ be a uniform domain. Let $E_Q$ be the extension map as defined in \eqref{e:EQ}.
	The extension map $E_Q: L^2(U,m) \to L^2(X,m)$ is a bounded linear operator. There exists $C_1,A_1 \in (1,\infty)$ such that for any $f \in L^2(U,m), \xi \in \partial U, r >0$, we have
	\begin{equation} \label{e:bdloc}
		\int_{B(\xi,r)} \abs{E_Qf}^2\,dm \le C_1 \int_{B(\xi,A_1 r)\cap U} \abs{f}^2\,dm.
	\end{equation}
\end{lemma}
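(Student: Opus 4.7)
The plan is to split $\int_{B(\xi,r)}|E_Q f|^2\,dm$ into the contributions from $U$, $V$ and $\partial U$. Since $m(\partial U)=0$ by Lemma \ref{l:doubling}, the boundary piece vanishes; on $U$ we have $E_Q f = f$ by definition, contributing $\int_{B(\xi,r)\cap U}|f|^2\,dm$ to the right side of \eqref{e:bdloc}. So the real work is to estimate $\int_{B(\xi,r)\cap V}|E_Q f|^2\,dm$, and the global $L^2$ bound in part (a) will follow from exactly the same computation run over all of $\wt{\mathfrak{S}}$ rather than only $\wt{\mathfrak{S}}(B(\xi,r))$.

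For $x \in B(\xi,r)\cap V$, only balls $B \in \wt{\mathfrak{S}}$ with $x \in 6B$ contribute to the sum defining $E_Q f(x)$, and any such $B$ lies in $\wt{\mathfrak{S}}(B(\xi,r))$. Since $\{\psi_B\}_{B\in\mathfrak{S}}$ is a partition of unity on $V$, $\sum_{B\in\wt{\mathfrak{S}}}\psi_B(x)\leq 1$; Cauchy--Schwarz followed by Jensen's inequality on the inner average then gives
\[
|E_Q f(x)|^2 \;\leq\; \sum_{B\in\wt{\mathfrak{S}}(B(\xi,r))}\psi_B(x)\fint_{3Q(B)}|f|^2\,dm.
\]
Integrating in $x$ and using $\int\psi_B\,dm\leq m(6B)$ yields
\[
\int_{B(\xi,r)\cap V}|E_Q f|^2\,dm \;\leq\; \sum_{B\in\wt{\mathfrak{S}}(B(\xi,r))}\frac{m(6B)}{m(3Q(B))}\int_{3Q(B)}|f|^2\,dm.
\]

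The crucial geometric step is the comparison $m(6B)\lesssim m(3Q(B))$ with a constant independent of $B$. Since $\epsilon$ is small (Assumption \ref{a:main}), any Whitney ball $B=B_V(y,s)$ satisfies $\delta_V(y)=s(1+\epsilon)/\epsilon > 6s$, so $B(y,6s)\subset V$ and $m(6B)=m(B(y,6s))$; similarly, writing $Q(B)=B_U(x,\rho)$ we have $\delta_U(x)>3\rho$, hence $B(x,3\rho)\subset U$ and $m(3Q(B))=m(B(x,3\rho))$. Proposition \ref{p:reflect}(a) provides $\rho\asymp s$ and $d(x,y)\lesssim s$, so $B(y,6s)\subset B(x,Cs)\subset B(x,C'\rho)$, and global doubling of $m$ gives $m(B(y,6s))\lesssim m(B(x,3\rho))$, as desired.

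Finally, Proposition \ref{p:reflect}(a) together with Lemma \ref{l:reflect}(a),(d) shows that for every $B\in\wt{\mathfrak{S}}(B(\xi,r))$, the ball $3Q(B)$ is contained in $B(\xi,A_1 r)\cap U$ for a constant $A_1=A_1(A,\epsilon)$. Proposition \ref{p:reflect}(b) (the map $Q$ is at most $K$-to-1) together with the bounded overlap of $\{3D:D\in\mathfrak{R}\}$ from Proposition \ref{p:whitney}(d),(e) then collapses the sum to
\[
\sum_{B\in\wt{\mathfrak{S}}(B(\xi,r))}\int_{3Q(B)}|f|^2\,dm \;\leq\; K\!\!\sum_{D\in\mathfrak{R}(B_U(\xi,K_0 r))}\!\int_{3D}|f|^2\,dm \;\lesssim\; \int_{B(\xi,A_1 r)\cap U}|f|^2\,dm,
\]
which proves \eqref{e:bdloc}; summing over all of $\wt{\mathfrak{S}}$ and $\mathfrak{R}$ in place of $\wt{\mathfrak{S}}(B(\xi,r))$ and $\mathfrak{R}(B_U(\xi,K_0 r))$ gives the global $L^2$ bound of part (a). The only nontrivial step in this chain is the measure comparison $m(6B)\lesssim m(3Q(B))$; once one exploits the fact that Whitney balls of small parameter $\epsilon$ sit strictly inside $V$ and $U$, everything reduces to global doubling of $m$, and no serious obstacle remains.
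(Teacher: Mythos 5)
Your proof is correct and follows essentially the same route as the paper: split $\int_{B(\xi,r)}|E_Q f|^2\,dm$ into the $U$- and $V$-contributions, apply Cauchy--Schwarz on the $V$-part, compare $m(6B)$ with $m(3Q(B))$ via Proposition \ref{p:reflect}(a) and doubling, and conclude using the $K$-to-$1$ property of $Q$ together with the bounded overlap and containment of $\{3Q(B):B\in\wt{\mathfrak{S}}(B(\xi,r))\}$ in $B(\xi,A_1r)\cap U$. The only cosmetic variation is that you invoke $\sum_B\psi_B\le 1$ and Jensen, while the paper bounds $\psi_B\le\one_{6B}$ and uses the bounded overlap of $\{6B\}$ from Proposition \ref{p:whitney}(d); the geometric inputs and the structure of the estimate are identical.
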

\begin{proof}
	The linearity of $E_Q$ is evident from the definition. 
	The boundedness of $E_Q$ follows from \eqref{e:bdloc} by letting $r \to \infty$.
	
	It remains to show \eqref{e:bdloc}. 
	By Propositions \ref{p:reflect}(a,b) and \ref{p:whitney}(d),
	there exist $A_1, C_0 \in (1,\infty)$ such that
	\begin{equation} \label{e:lb0}
		\sum_{B \in \wt{\mathfrak{S}}} \one_{3 Q(B)} \le C_0 \one_{B_U(x, A_1 r)}.
	\end{equation}
	Since $\{3B:B \in \wt{\mathfrak{S}}\}$ is a cover of $V$, we have
	\begin{align} \label{e:lb1}
		\int_{B(\xi,r)}\abs{E_Q f}^2 \, dm & \le \int_{U\cap B(\xi,r)} \abs{f}^2\,dm + \int_{V} \abs{\sum_{B \in \wt{\mathfrak{S}}(B(\xi,r)) } \psi_B(\cdot) \fint_{3Q(B)} f \,dm }^2 \,dm  \nonumber \\
		& \le \int_{U\cap B(\xi,r)} \abs{f}^2\,dm   + \int_{V} \abs{\sum_{B  \in \wt{\mathfrak{S}}(B(\xi,r)) } \one_{6B}(\cdot) \fint_{3Q(B)} f \,dm }^2 \,dm.  
	\end{align}
	For the second term we estimate using Proposition \ref{p:whitney}(d) and $\epsilon < 1/6$ we obtain  $$\sum_{B_V(x_i,r_i) \in \wt{\mathfrak{S}}(B(\xi,r)) }\one_{B_V(x_i,6r_i)} \lesssim 1,$$    the Cauchy-Schwarz inequality and volume doubling, to obtain 
	\begin{align} \label{e:lb2}
		\MoveEqLeft{ \int_{V} \abs{\sum_{B \in \wt{\mathfrak{S}}(B(\xi,r)) } \one_{B} \fint_{3Q(B)} f \,dm }^2 \,m(dx) } \nonumber \\
		& \lesssim \int_{V}\sum_{B  \in \wt{\mathfrak{S}}(B(\xi,r)) } \one_{6B}  \abs{\fint_{3Q(B)} f \,dm }^2 \,m(dx) \nonumber \\
	%	&\lesssim \sum_{B \in \wt{\mathfrak{S}}(B(\xi,r))} m(B) \abs{\fint_{3Q(B)} f \,dm }^2  \nonumber \\
		&\lesssim  \sum_{B \in \wt{\mathfrak{S}}(B(\xi,r))}  m(Q(B)) \abs{\fint_{3Q(B)} f \,dm }^2  \quad \mbox{(by  \eqref{e:prefa})} \nonumber \\
		& \lesssim \sum_{B \in \wt{\mathfrak{S}}(B(\xi,r))} \int_{3Q(B)} f^2\,dm \q \mbox{(by Cauchy-Schwarz inequality)} \nonumber \\
		& \lesssim \int_{U \cap B(\xi,A_1r)}  f^2 \,dm \quad \mbox{(by \eqref{e:lb0}).}
	\end{align}
	Combining \eqref{e:lb1} and \eqref{e:lb2}, we obtain \eqref{e:bdloc}. 
\end{proof}

\subsection{Energy bounds on the extension map} \label{s:heart}
We recall an elementary lemma about bounded degree graphs that is used to obtain energy bounds on the extended function. A  unweighted version of this lemma is contained in \cite[Proof of Lemma 7.5]{Soa}. 
\begin{lemma}  \label{l:fuzz}
	Let $G=(V_G,E_G)$ be a bounded degree graph with $\deg(x) \le M$ for all $x \in V_G$. Let $d_G$ denote the graph distance on $V_G$. Let $C \ge 1, m: V_G \to (0,\infty)$ be such that 
	\begin{equation}\label{e:gentle} 
	C^{-1}m(y) \le	m(x) \le Cm(y) \q \mbox{for all $x,y \in V_G$ with $d_G(x,y)=1$.}
	\end{equation}
	Then for any $L \ge 1, f: V_G \to \bR$, we have
	\begin{equation} \label{e:fuzz}
		\sum_{x \in V_G}	\sum_{\substack{y \in  V_G,\\ d_G(x,y) \le L}} \abs{f(x)-f(y)}^2 m(x) \le 	 L C^L M^{2(L+1)} \sum_{x \in V}	\sum_{\substack{y \in  V_G,\\ d_G(x,y) \le 1}} \abs{f(x)-f(y)}^2 m(x).
	\end{equation}
\end{lemma}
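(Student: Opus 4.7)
The plan is to bound $|f(x)-f(y)|^2$ via a Cauchy--Schwarz estimate telescoping along a chosen shortest path from $x$ to $y$ in $G$, and then swap the order of summation so that the left-hand side becomes a sum over single edges of $G$, each weighted by the cumulative $m$-mass of the pairs whose chosen path traverses that edge.

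First I would, for each ordered pair $(x,y) \in V_G \times V_G$ with $1 \le d_G(x,y) = k \le L$, fix once and for all a shortest path $\mathcal{P}(x,y) = (x = x_0, x_1, \ldots, x_k = y)$ in $G$. By Cauchy--Schwarz,
\[
|f(x) - f(y)|^2 \le k \sum_{i=0}^{k-1} |f(x_i) - f(x_{i+1})|^2 \le L \sum_{i=0}^{k-1} |f(x_i) - f(x_{i+1})|^2.
\]
Multiplying by $m(x)$, summing over all such pairs, and interchanging the order of summation, I obtain
\[
\sum_{x \in V_G}\sum_{\substack{y \in V_G \\ d_G(x,y) \le L}} |f(x)-f(y)|^2\, m(x) \le L \sum_{\substack{(u,v) \\ d_G(u,v)=1}} |f(u)-f(v)|^2 \sum_{\substack{(x,y) \\ (u,v) \in \mathcal{P}(x,y)}} m(x),
\]
where $(u,v) \in \mathcal{P}(x,y)$ means $u$ and $v$ appear as consecutive vertices (in that order) in $\mathcal{P}(x,y)$.

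Next I would bound the inner sum. If $(u,v)$ occurs as the $i$-th directed edge of $\mathcal{P}(x,y)$, then $0 \le i \le L-1$, $d_G(x,u) = i$, and $d_G(v,y) = k-1-i$, so $x \in B_G(u,L-1)$ and $y \in B_G(v,L-1)$. The degree bound $\deg(\cdot) \le M$ forces $|B_G(u,L-1)|, |B_G(v,L-1)| \le \sum_{j=0}^{L-1} M^j \le M^L$ (for $M \ge 2$; the case $M \le 1$ is trivial since every connected component of $G$ then has diameter at most $1$, making the two sides of \eqref{e:fuzz} equal), while iterating \eqref{e:gentle} along a shortest path from $u$ to $x$ yields $m(x) \le C^{d_G(x,u)} m(u) \le C^L m(u)$. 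Consequently
\[
\sum_{\substack{(x,y) \\ (u,v) \in \mathcal{P}(x,y)}} m(x) \le |B_G(u,L-1)|\cdot |B_G(v,L-1)|\cdot C^L m(u) \le C^L M^{2L} m(u).
\]

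Substituting back and using
\[
\sum_{\substack{(u,v) \\ d_G(u,v)=1}} |f(u)-f(v)|^2 m(u) = \sum_{x \in V_G}\sum_{\substack{y \in V_G \\ d_G(x,y) \le 1}} |f(x)-f(y)|^2 m(x)
\]
(the diagonal terms contributing zero), I conclude that the left-hand side of \eqref{e:fuzz} is at most $L\, C^L M^{2L}$ times the right-hand side, which is stronger than the claimed bound since $M^{2L} \le M^{2(L+1)}$. There is no real obstacle here: the argument is simply a telescoping Cauchy--Schwarz combined with straightforward edge-counting, and the loose constant $L C^L M^{2(L+1)}$ in the statement amply absorbs the crude ball-volume estimate used above.
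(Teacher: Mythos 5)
Your proof is correct and follows essentially the same approach as the paper: telescoping Cauchy--Schwarz along a shortest path, then using \eqref{e:gentle} to shift the weight $m(x)$ onto an endpoint of each edge, and counting the pairs $(x,y)$ whose path traverses a given edge by a ball-volume bound from the degree hypothesis. Your accounting is marginally tighter (giving $M^{2L}$ in place of $M^{2(L+1)}$) and you handle the trivial case $M \le 1$ explicitly, but the underlying argument is the same.
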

\begin{proof}
	For every ordered pair $(x,y) \in V_G \times V_G$ such that $d_G(x,y) \le L$, we pick a path $\gamma(x,y)$ of vertices $x=x_0,\ldots,x_n=y$ such that $n=d_G(x,y) \le L$. By the Cauchy-Schwarz inequality and \eqref{e:gentle}, we have 
	\begin{align} \label{e:fz1}
		\abs{f(x)-f(y)}^2 m(x) &\le d_G(x,y) \sum_{i=0}^{d_G(x,y)-1} \abs{f(x_i)-f(x_{i+1})}^2 m(x) \\
		& \le L C^L \sum_{i=0}^{d_G(x,y)-1} \abs{f(x_i)-f(x_{i+1})}^2 m(x_i).
	\end{align}
	For each edge $(z_1,z_2)$ the term $\abs{f(z_i)-f(z_{i+1})}^2 m(z_i)$ arises from at most $(1+M+M^2 +\ldots+M^{L})^2 \le M^{2(L+1)}$ (for all $M \ge 2$) pairs $(x,y) \in V \times V$ such that $d_G(x,y)\le L$. By summing \eqref{e:fz1} over all such pairs $(x,y)$ and using this observation, we obtain \eqref{e:fuzz}.
\end{proof}

Next, we would like to obtain an analogue of Lemma \ref{l:l2b} with $L^2$ norms replaced by energy measures. The following proposition provides such a scale-invariant boundedness estimate for energy of the extension. 
In \eqref{e:eqpi} below, we obtain Poincar\'e type inequality where the variance is over a ball on $X$ end the Dirichlet energy is over a ball on $U$. This variant of Poincar\'e inequality  is the key   ingredient of this work and plays an important role in estimating the energy of the extension $E_Q(f)$.
\begin{proposition} \label{p:eqpi} 
	Let   $(X,d,m,\sE,\sF)$ be an MMD space that satisfies the heat kernel estimate \hyperlink{hke}{$\on{HKE(\Psi)}$} for some scale function $\Psi$ and let $m$ be a doubling measure. Let $U$ be a uniform domain in $(X,d)$ and let $E_Q$ be the extension map as defined in \eqref{e:EQ} satisfying Assumption \ref{a:main}. Let $V= (U^c)^\circ$. Then we have the following:
	\begin{enumerate}[(a)]
		\item 
		There exist  $c_0 \in (0,1), K_0,C_1 \in (1,\infty)$ such that for any $\xi \in \partial U, r< c_0 \diam(U)$, such that 
		\begin{equation} \label{e:eqpi}
			\inf_{\alpha \in \bR}\int_{B(\xi,r)} \abs{E_Q f - \alpha}^2 \, dm \le C_1 \Psi(r) \int_{B(\xi,K_0 r) \cap U} d\Gamma_U (f,f), \q \mbox{for any $f \in \sF(U)$}.
		\end{equation}
		\item For any $f \in L^2(U)$, we have $E_Q(f) \in \sF_{\loc}(V)$. 
		There exits $c_1 \in (0,1), C_1, C_2, K_1 \in (1,\infty)$ such that for all $f \in \sF(U), \xi \in \partial U, 0< r < c_1 \diam(U)$, we have 
		\begin{equation} \label{e:ve1}
			\Gamma(E_Q(f),E_Q(f))(B_V(\xi,r)) \le C_1 \Gamma_U(f,f)(B_U(\xi,K_1 r)),
		\end{equation}
		and 
		\begin{equation} \label{e:ve2}
			\Gamma(E_Q(f),E_Q(f))(V) \le C_2 \left(\sE_U(f,f) + \frac{1}{\Psi(\diam(U))} \int_U f^2 \,dm \right),
		\end{equation}
		where by convention that $\frac{1}{\Psi(\diam(U))} =0$ if $\diam(U)=\infty$.
		\item There exists $C_3 \in (1,\infty)$ such that for any $f \in \sF(U)$, we have $E_Q(f) \in \sF$ and 
		\begin{equation} \label{e:enb}
			\sE(E_Q(f),E_Q(f)) \le  C_3 \left(\sE_U(f,f) + \frac{1}{\Psi(\diam(U))} \int_U f^2 \,dm \right).
		\end{equation}
		\item For all $f \in \sF(U)$, we have
		\begin{equation} \label{e:emb}
			\Gamma(E_Q(f),E_Q(f))(\partial U) =0.
		\end{equation} 
		\item There exists $C_4, K_2 \in (1,\infty), c_2 \in (0,1)$ all $f \in \sF(U)$ such that for all $x \in \overline{U}, 0<r<c_2 \diam(U,d)$, 
		\begin{equation} \label{e:se}
			\Gamma(E_Q(f),E_Q(f))(B(x,r)) \le C_4 \Gamma_U(f,f)(B_U(x,K_3r)).
		\end{equation}
	\end{enumerate}
\end{proposition}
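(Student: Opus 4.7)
The plan is to establish (a) first, then (b), and then to combine these with Theorems~\ref{t:pi} and \ref{t:besov} to deduce (c)--(e).

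For (a), I would pick a corkscrew point $y\in U\cap B(\xi,r)$ with $\delta_U(y)\ge r/(3A)$ via Lemma~\ref{l:corkscrew} and take $\alpha = f_{B(y,r/(3A))}$. Since $m(\partial U) = 0$ the integral splits into its $U$- and $V$-parts. The $U$-part follows by running the proof of Theorem~\ref{t:pi} with $B(y,r/(3A))$ playing the role of the central ball in $\mathfrak{R}(B_U(\xi,r))$ (valid by Remark~\ref{r:central2} since $r/(3A) \gtrsim r$). For the $V$-part, the constraint $r < c_0 \diam(U)$ ensures every relevant Whitney ball lies in $\wt{\mathfrak{S}}$ (Lemma~\ref{l:reflect}(a)), so $\sum_{B\in\wt{\mathfrak{S}}}\psi_B \equiv 1$ on a neighborhood of $B(\xi,r)\cap V$; Jensen then gives $(E_Qf-\alpha)^2 \le \sum_B \psi_B(c_B-\alpha)^2$. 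Integrating, using the bounded overlap from Proposition~\ref{p:whitney}(d), and noting that $Q(B)\in\mathfrak{R}(B_U(\xi,K_0 r))$ by Lemma~\ref{l:reflect}(d), the exact sum $\sum_B m(B)(c_B-\alpha)^2$ that appears in the chain-of-Whitney-balls argument culminating in \eqref{e:pi7} of the proof of Theorem~\ref{t:pi} is bounded by $\Psi(r)\,\Gamma_U(f,f)(B_U(\xi,Kr))$.

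For (b), $E_Qf \in \sF_{\loc}(V)$ is immediate from local finiteness of $\wt{\mathfrak{S}}$. The crucial step for \eqref{e:ve1} is the following partition-of-unity collapse: on any open $W\Subset V$ with $B_V(\xi,r) \subset W$ where $\sum_{B\in\wt{\mathfrak{S}}}\psi_B\equiv 1$, and for any constant $c_0$, write $E_Qf - c_0 = \sum_B(c_B-c_0)\psi_B$. Using that $\sum_{B'}\Gamma(\psi_B,\psi_{B'}) = 0$ on $W$ (strong locality applied to $\sum_{B'}\psi_{B'}\equiv 1$), the bilinear expansion collapses to
\[
\Gamma(E_Qf,E_Qf) = -\tfrac{1}{2}\sum_{B,B'}(c_B-c_{B'})^2\,\Gamma(\psi_B,\psi_{B'}) \quad \text{on } W,
\]
with only neighbor pairs $B\stackrel{\mathfrak{S}}{\sim}B'$ contributing. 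Cauchy--Schwarz on $|\Gamma(\psi_B,\psi_{B'})|$, the cap bound $\Gamma(\psi_B,\psi_B) \le C m(B)/\Psi(r(B))$ from Lemma~\ref{l:partition}, and the bounded degree of $G_{\mathfrak{S}}$ (Lemma~\ref{l:reflect}(b)) reduce the problem to bounding $\sum_{B\sim B'}(c_B-c_{B'})^2\,m(B)/\Psi(r(B))$. For any neighbor pair $B, B'$, Proposition~\ref{p:reflect}(c) supplies a chain in $\mathfrak{R}$ of \emph{bounded length} $N_0$ from $Q(B)$ to $Q(B')$ with all radii comparable, so Lemma~\ref{l:pineighbor} along this short chain yields $(c_B-c_{B'})^2 \lesssim \Psi(r(B))\,m(B)^{-1}\,\Gamma_U(f,f)(\text{nbhd of }Q(B))$. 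The $\Psi$ and $m$ factors cancel, and summing via the $K$-to-$1$ property of $Q$ (Proposition~\ref{p:reflect}(b)) and bounded overlap of these neighborhoods in $U$ gives \eqref{e:ve1}. Then \eqref{e:ve2} follows by covering $V$ with Whitney balls: small ones ($s<c\diam(U)$) are handled by \eqref{e:ve1} centered at a nearest boundary point, while large Whitney balls are handled via an $L^2$-type estimate using the cutoff in $\wt{\mathfrak{S}}$ and the cap bound of Lemma~\ref{l:partition}.

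For (c), I would first establish $E_Qf\in\sF$ via Theorem~\ref{t:besov}(b): estimate $W(E_Qf,\rho) := \Psi(\rho)^{-1}\int_X \fint_{B(x,\rho)}(E_Qf(x)-E_Qf(y))^2\,dm(y)\,dm(x)$ uniformly in $\rho$ by a $\rho$-net argument which classifies net points $n$ according to the position of $B(n,2\rho)$---entirely inside $U$ (apply Theorem~\ref{t:pi}), crossing $\partial U$ (apply (a) centered at a nearby boundary point), or entirely inside $V$ (use the neighbor-oscillation bound from (b) together with Theorem~\ref{t:besov}(a) applied to each $\psi_B$). Bounded overlap in each case gives $W(E_Qf,\rho) \lesssim \sE_U(f,f) + \Psi(\diam U)^{-1}\|f\|_{L^2(U)}^2$ uniformly, hence $E_Qf\in\sF$. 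The bound \eqref{e:enb} then follows from $\sE(E_Qf,E_Qf) = \Gamma(E_Qf,E_Qf)(U) + \Gamma(E_Qf,E_Qf)(V) + \Gamma(E_Qf,E_Qf)(\partial U)$: the first equals $\sE_U(f,f)$ by Definition~\ref{d:local}, the second is bounded by \eqref{e:ve2}, and the third vanishes by (d). For (d), apply Theorem~\ref{t:besov}(c) to $K = \partial U$ and cover $(\partial U)_\delta$ by bounded-overlap balls $B(\xi_i,\delta)$ with $\xi_i\in\partial U$; estimate each contribution via (a) (for net points near $\partial U$) and Theorem~\ref{t:pi} (for net points with $B(n,2\rho)\subset U$). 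The crucial point is that both right-hand sides are expressed in $\Gamma_U(f,f)$ on $U$, not $\Gamma(E_Qf,E_Qf)$ near $\partial U$, so there is no circularity; the resulting bound $\lesssim \Gamma_U(f,f)((\partial U)_{C\delta}\cap U)$ tends to $0$ as $\delta\downarrow 0$ by dominated convergence since $\bigcap_\delta\bigl((\partial U)_{C\delta}\cap U\bigr) = \emptyset$. Part (e) combines (b) and (d): decompose $B(x,r)$ into $U$-, $V$-, and $\partial U$-parts; the first contributes $\Gamma_U(f,f)(B_U(x,r))$, the third is zero by (d), and for the second pick $\xi\in\partial U$ with $d(x,\xi)\le\delta_U(x)\le r$ so that $B(x,r)\cap V\subset B_V(\xi,2r)$ and invoke (b).

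The main obstacle is finding the right algebraic identity in (b). Bounding $(c_B - c_0)^2$ against a single reference $c_0$ (as in the proof of (a) and Theorem~\ref{t:pi}) would require Poincar\'e chains in $\mathfrak{R}$ of length scaling like $(r/r(B))^\alpha$ for small Whitney balls $B$ near $\partial U$, and the resulting estimate would not sum to $\Gamma_U(f,f)$ because the extra $1/\Psi(r(B))$ weight in the cap bound of Lemma~\ref{l:partition} cannot absorb the $\Psi(r_j)$-factors from long chains. The partition-of-unity collapse to \emph{neighbor} differences $(c_B-c_{B'})^2$, combined with the bounded-length chains of Proposition~\ref{p:reflect}(c) and the cancellation of $\Psi(r(B))$ with $m(B)$ factors between Lemma~\ref{l:partition} and Lemma~\ref{l:pineighbor}, is what restores the scale-invariant bound; this is the conceptual heart of the entire extension argument.
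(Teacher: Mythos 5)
Your proposal is correct and follows the same overall architecture as the paper: chain-of-Whitney-balls Poincar\'e for (a); collapse to neighbor differences, reflect via $Q$, and use bounded-length chains from Proposition~\ref{p:reflect}(c) plus Lemma~\ref{l:pineighbor} for (b); the Korevaar--Schoen/Besov criterion (Theorem~\ref{t:besov}(b)) with a net-point case split for (c); Theorem~\ref{t:besov}(c) with the same net split for (d); and reduction to (b) and (d) for (e). The one genuinely different step is the algebraic reduction in (b). The paper localizes to each Whitney ball $B_V(y_0,3s_0)$, writes $E_Qf-\alpha=\sum_B(c_B-\alpha)\psi_B$ there, and bounds $\Gamma(E_Qf,E_Qf)(3B_V(y_0,s_0))$ by Cauchy--Schwarz with the bounded-degree factor $N_1$ and then optimizes over $\alpha$, which forces $\alpha$ to be a neighboring $c_{B'}$ and thus yields neighbor differences indirectly. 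You instead invoke the global identity $\Gamma(E_Qf,E_Qf)=-\tfrac12\sum_{B,B'}(c_B-c_{B'})^2\,\Gamma(\psi_B,\psi_{B'})$ (valid because $\sum_{B'}\Gamma(\psi_B,\psi_{B'})=0$ where the partition sums to $1$), which produces the neighbor-difference sum directly. Both routes deliver the same bound $\sum_{B\sim B'}(c_B-c_{B'})^2\,m(B)/\Psi(r(B))$, and your version is arguably cleaner since the cancellation of the $\alpha$-reference is built in rather than optimized post hoc; the paper's version is slightly more elementary in that it never needs the global identity, only local Cauchy--Schwarz. Two small caveats worth noting: in (a) the corkscrew ball $B(y,r/(3A))$ is not itself a Whitney ball, so one should pass to the Whitney ball containing $y$ (which has comparable radius) before invoking Remark~\ref{r:central2}; and in (c) the paper handles both $U$-interior and $V$-interior net points uniformly via $\operatorname{PI}(\Psi)$ on $U\cup V$, whereas your suggestion to apply Theorem~\ref{t:besov}(a) per $\psi_B$ is a workable but more circuitous route to the same estimate.
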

\begin{proof}
	\begin{enumerate}[(a)]
		\item 
		Let $B_0=B_U(x_0,r_0) \in \mathfrak{R}(B_U(\xi,r))$ be a central ball as given by Lemma \ref{l:central}. 
		By \eqref{e:pi1}, \eqref{e:pi2} and \eqref{e:pi6} in the proof of Theorem \ref{t:pi}, we obtain 
		\begin{align} \label{e:eqp1} \nonumber
			\int_{B(\xi,r) \cap U} \abs{E_Q f - f_{B_U(x_0,3r_0)}}^2 \, dm &=	\int_{B(\xi,r) \cap U} \abs{ f - f_{B_U(x_0,3r_0)}}^2 \, dm \\
			&\lesssim  \Psi(r) \int_{B(\xi,K r) \cap U} d\Gamma_U(f,f).
		\end{align}
		Let $\wt{\mathfrak{S}}(B(\xi,r))$ be as defined in \eqref{e:locS} so that  
		$$E_Q f(x) =  \sum_{B \in \wt{\mathfrak{S}}(B(\xi,r)) }  \psi_B(x) \fint_{3 Q(B)} f \,dm \quad \mbox{ for all  } x\in B(\xi,r) \setminus U.$$ By \eqref{e:prefa}, there exist $K_1>0$ such that
		\begin{equation} \label{e:eqp2}
			\{Q(B): B  \in \wt{\mathfrak{S}}(B(\xi,r))\} \subset \mathfrak{R}(B(\xi,K_1r)),
		\end{equation}
		where $\mathfrak{R}(B(x,K_1r))$ is as defined in \eqref{e:defRB}.
		We estimate
		\begin{align}
			\MoveEqLeft{\int_{B_V(\xi,r)} \abs{E_Qf - f_{B_U(x_0,3r_0)}}^2 \, dm} \nonumber \\ & \le \int_{B_V(\xi,r)}  \abs{\sum_{B \in \wt{\mathfrak{S}}(B(\xi,r))} (f_{3Q(B)} - f_{B_U(x_0,3r_0)})  \psi_B(\cdot)}^2 \, dm \nonumber \\
			& \lesssim \int_{B_V(\xi,r)}  \sum_{B \in \wt{\mathfrak{S}}(B(\xi,r))} (f_{3Q(B)} - f_{B_U(x_0,3r_0)})^2  \psi_B(\cdot)^2 \, dm  \nonumber \\ &\quad \mbox{(by Proposition \ref{p:whitney}(d))}\nonumber \\
			&\lesssim   \sum_{B \in \wt{\mathfrak{S}}(B(\xi,r))} (f_{3Q(B)} - f_{B_U(x_0,3r_0)})^2   m(B) \nonumber \\ &\quad \mbox{(by doubling and $\psi_{B}(\cdot) \le \one_{6B}(\cdot)$)}\nonumber \\
			&\lesssim  \sum_{B \in \wt{\mathfrak{S}}(B(\xi,r))} (f_{3Q(B)} - f_{B_U(x_0,3r_0)})^2   m(Q(B))   \quad \mbox{(by \eqref{e:vd} and \eqref{e:prefa})} \nonumber \\
			&\lesssim  \sum_{B \in {\mathfrak{R}}(B_U(\xi,K_1 r))} (f_{3B} - f_{B_U(x_0,3r_0)})^2   m(B) \q \mbox{(Proposition \ref{p:reflect}(b), \eqref{e:eqp2})} \nonumber \\
			&\lesssim \Psi(r) \Gamma(f,f)(B_U(\xi, K_2 r)) \nonumber
		\end{align}
		The last line above follows from the same argument as \eqref{e:pi7} as explained in Remark \ref{r:central2} (recall that $27 A_P \eps <1$ from Assumption \ref{a:main}).
		\item For any open set $B_V(y_0,s_0) \in \mathfrak{S}$ and $f \in \sF(U)$, by Lemma \ref{l:partition}(b)  we obtain  
		\begin{equation}\label{e:eqp2b}
		E_Q (f)(\cdot) \equiv \sum_{B_V(y,s) \in  \wt{\mathfrak{S}}(B_V(y_0,3s_0))} f_{3 Q(B_V(y,s))} \psi_{B_V(y,s)}(\cdot) \q \mbox{ on $B_V(y_0,3s_0)$.}
		\end{equation}
		By Lemma \ref{l:reflect}(b), there exist $N_1 \in \bN$ such that 
		$\# \wt{\mathfrak{S}}(B_V(y_0,3s_0)) \le N_1$ for all  $B_V(y_0,s_0) \in \mathfrak{S}$. Therefore $E_Q(f) \in \sF (B_V(y_0,3s_0))$  for all  $B_V(y_0,s_0) \in \mathfrak{S}$ and consequently by Definition \ref{d:whitney}(iii) we conclude $E_Q(f) \in \sF_{\loc}(V)$.

	%	In what follows, we denote by $r(B)$ the radius of a ball $B$.
		By \eqref{e:eqp2b}, strong locality, and Lemma \ref{l:partition}(b), we have the estimate 
		\begin{align} \label{e:eqp3}
			\Gamma(E_Q(f),E_Q(f))(B_V(y_0,3s_0)) &\le N_1 \inf_{\alpha \in \bR} \sum_{B \in \wt{\mathfrak{S}}(B_V(y_0,3s_0))} \abs{f_{3 Q(B)}-\alpha}^2 \sE(\psi_{B},\psi_{B}) \nonumber \\
			&\stackrel{\eqref{e:capbnd}}{\lesssim} \inf_{\alpha \in \bR} \sum_{B \in \wt{\mathfrak{S}}(B_V(y_0,3s_0))} \abs{f_{3 Q(B)}-\alpha}^2  \frac{m(B) }{\Psi(r(B))} \nonumber \\
			& \lesssim \inf_{\alpha \in \bR} \sum_{B \in \wt{\mathfrak{S}}(B_V(y_0,3s_0))} \abs{f_{3 Q(B)}-\alpha}^2  \frac{m(Q(B)) }{\Psi(r(Q(B)))},
		\end{align}
		where we use Proposition \ref{p:reflect}(a), \eqref{e:reg} and \eqref{e:vd} in the last line above.
		
		By Lemma \ref{l:reflect}(a,e), we choose  $c_0 \in (0,1)$ such that for all $\xi \in \partial U, 0<r < c_0 \diam(U), B_1 \in \mathfrak{S}(B_V(\xi,r)), B_2 \in \mathfrak{S}(B_1)$, we have  
		\begin{equation} \label{e:eqp4}
			B_1 \in \wt{\mathfrak{S}}, \q B_2 \in \wt{\mathfrak{S}}.
		\end{equation}
		Since $\{3B: B \in \mathfrak{S}\}$ cover $V$,    for all $\xi \in \partial U, 0<r < c_0 \diam(U)$ we obtain
		\begin{align} \label{e:eqp5}
			\MoveEqLeft{\Gamma(E_Q(f), E_Q(f))(B_V(\xi,r)) } \nonumber\\
			&\le 	\sum_{B \in \mathfrak{S}(B_V(\xi,r))} \Gamma(E_Q(f), E_Q(f))(3B) \nonumber \\ &\stackrel{\eqref{e:eqp4}}{=}\sum_{B \in \wt{\mathfrak{S}}(B_V(\xi,r))} \Gamma(E_Q(f), E_Q(f))(3B) \nonumber\\
			&\stackrel{\eqref{e:eqp3}, \eqref{e:eqp4}}{\lesssim} \sum_{B \in    \wt{\mathfrak{S}}(B_V(\xi,r))}  \sum_{B': B' \stackrel{\mathfrak{S}}{\sim} B}  \abs{f_{3Q(B')}-f_{3Q(B)}}^2 \frac{m(Q(B))}{\Psi(r(Q(B)))}.
		\end{align}
		By Lemma \ref{l:reflect}(c,d,e), there exist  $c_1 \in (0,c_0), K_1, L_1 \in (0,\infty)$ such that for all   $\xi \in \partial U, 0 < r< c_1\diam(U),  B_1 \in \mathfrak{S}(B_V(\xi,r))$, we have   
		\begin{align} \label{e:eqp6}
			\MoveEqLeft{B_2 \in  \wt{\mathfrak{S}}(B_1), \q	Q(B_1) \in\wt{\mathfrak{R}}(B_U(\xi,K_1r)),  
		}  \nonumber \\
		\MoveEqLeft{	D_{\mathfrak{S}}(Q(B_1),Q(B_2)) \le L_1,  \q\mbox{ for all  $B_2 \in {\mathfrak{S}}(B_1)$;}} \nonumber\\
			 \MoveEqLeft{B_3 \in \wt{\mathfrak{R}}(B_U(\xi,K_1r))  \q \mbox{ for all $ B_3 \in \mathfrak{R}$ with $D_{\mathfrak{R}}(Q(B_1),B_3) \le L_1$}}.
		\end{align}
		By Proposition \ref{p:whitney}(c) and \eqref{e:vd}, there exists $C_0 \ge 1$ such that for all $B_1, B_2 \in \mathfrak{R}$ satisfying $D_{\mathfrak{R}}(B_1,B_2) = 1$, we have
		\begin{equation} \label{e:eqp4b}
			C_0^{-1}	\frac{m(B_2)}{\Psi(r(B_2))}	\le \frac{m(B_1)}{\Psi(r(B_1))} \le C_0 	\frac{m(B_2)}{\Psi(r(B_2))}.
		\end{equation}
		By \eqref{e:eqp5} and \eqref{e:eqp6}, we estimate 
		\begin{align}
			\MoveEqLeft{\Gamma(E_Q(f), E_Q(f))(B_V(\xi,r)) } \nonumber\\
			& \lesssim \sum_{B_1 \in \mathfrak{R}(B_U(\xi, K_1r))}  \sum_{\substack{B_2 \in \mathfrak{R}(B_U(\xi, K_1r)) \\ D_{\mathfrak{R}}(B_1,B_2) \le L_1}} \abs{f_{3B_1} - f_{3B_2}}^2 \frac{m(B_1)}{\Psi(r(B_1))} \nonumber \\
			& \lesssim \sum_{B_1 \in \mathfrak{R}(B_U(\xi, K_1r))}  \sum_{\substack{B_2 \in \mathfrak{R}(B_U(\xi, K_1r)) \\ D_{\mathfrak{R}}(B_1,B_2) =1}} \abs{f_{3B_1} - f_{3B_2}}^2 \frac{m(B_1)}{\Psi(r(B_1))} \nonumber \\ &\q \mbox{(by \eqref{e:eqp4b} and Lemma \ref{l:fuzz})} \nonumber \\
			& \lesssim \sum_{B_1 \in \mathfrak{R}(B_U(\xi, K_1r))} \Gamma(f,f)\left(A_P \left(3+ 6 \frac{1+4\eps}{1-2\eps}\right)  B_1 \right) \nonumber\\   &\q \mbox{(by Lemmas \ref{l:pineighbor} and \ref{l:reflect}(b))} \nonumber \\
			&\lesssim \Gamma(f,f)\left( \bigcup_{B_1 \in \mathfrak{R}(B_U(\xi, K_1r))}A_P \left(3+ 6 \frac{1+4\eps}{1-2\eps}\right)  B_1 \right) \nonumber \\ &\q \mbox{(by Proposition \ref{p:whitney}(d)).} \label{e:eqp7}
		\end{align}
		By \eqref{e:srbnd1}, we obtain
		\[
		\bigcup_{B_1 \in \mathfrak{R}(B_U(\xi, K_1r))}A_P \left(3+ 6 \frac{1+4\eps}{1-2\eps}\right)  B_1 \subset B_U(\xi, Kr), \quad 
		\]
		where $$K= K_1 \left(1+ \frac{\eps}{1-2\eps}\left( 3 + A_P\left(3+ 6 \frac{1+4\eps}{1-2\eps} \right)\right)\right).$$ This along with \eqref{e:eqp7} yields  \eqref{e:ve1}.
		
		It remains to show \eqref{e:ve2}. If $\diam(U)=\infty$, then  by letting $r \to \infty$  in  \eqref{e:ve1} and using monotone convergence theorem, we obtain $\Gamma(E_Q(f),E_Q(f))(V) \le C_1 \sE_U(f,f)$ for all $f \in \sF(U)$.  
		
		Hence it suffices to consider the case $\diam(U)< \infty$. 
		Let $c_1 \in (0,1), K_1 \in (1,\infty)$ be such that \eqref{e:ve1} holds.
		Let $r_0:= \frac{c_1}{4}\diam(U)$ and $N \subset X$ be a maximal $r_0$-separated subset of $\partial U$.
		Then 
		\begin{equation}\label{e:eqp8}
			\partial U \subset \cup_{n\in N} B(n,r_0), \qquad d(x, \partial U) \ge r_0  \quad \mbox{for all $x \in X \setminus \left(\cup_{n \in N} B(n,2r_0)\right)$.}
		\end{equation}
		By \eqref{e:ve1} and the metric doubling property, we have 
		\begin{equation} \label{e:epq9}
			\sum_{n \in N} \Gamma(E_Q(f),E_Q(f))(B_V(n,2r_0)) \lesssim \sum_{n \in N} \Gamma(f,f)(B_U(n,2K_1r_0)) \lesssim \sE_U(f,f)
		\end{equation}
		for all $f \in \sF(U)$. 
		By  \eqref{e:whitc}, \eqref{e:defStil}, and Lemma \ref{l:partition}(b), 
		for all $B_V(y,s) \in \mathfrak{S}$  
		\begin{equation} \label{e:epq1}
		s \ge \frac{1+7\eps}{1-5\eps} \frac{\eps}{6A(1+\eps)} \diam(U) \mbox{ implies }	E_Q(f) \equiv 0 \, \mbox{ on $B_V(y,3s)$.}
		\end{equation}
		By \eqref{e:whitb} and \eqref{e:eqp8}, for any $B_V(y,s) \in \mathfrak{S}$   such that $$B_V(y,3s) \cap  \left(X \setminus \left(\cup_{n \in N} B(n,2r_0)\right) \right) \neq \emptyset,$$ we have 
		\begin{equation} \label{e:epq2}
			s = \frac{\eps}{1+\epsilon} \delta_{V}(y) > \frac{\eps}{1+4\eps} r_0 = \frac{ c_1 \eps}{4(1+4\eps)} \diam(U).
		\end{equation}
		For any $B_V(y,s) \in \mathfrak{S}$ such that $s > \frac{ c_1 \eps}{4(1+4\eps)} \diam(U)$, by the metric doubling property, \eqref{e:eqp3} we have 
		\begin{align} \label{e:epq3}
			\Gamma(E_Qf,E_Qf)(B_V(y,3s)) &\stackrel{\eqref{e:eqp3}}{\lesssim}\sum_{B \in  \wt{\mathfrak{S}}(B_V(y,s))}  \frac{m(Q(B))}{\Psi(Q(B))}\abs{f_{3Q(B)}}^2 \nonumber \\
			&\lesssim  \frac{1}{\Psi(\diam(U))}  \int_U f^2 \,dm
		\end{align}
		for all $f \in L^2(U,m)$. Since the metric doubling property implies that there are only a bounded number of balls $B_V(y,s) \in \mathfrak{S}$ such that $$s > \frac{ c_1 \eps}{4(1+4\eps)} \diam(U) \mbox{ and }s < \frac{1+7\eps}{1-5\eps} \frac{\eps}{6A(1+\eps)} \diam(U).$$ Therefore by Definition \ref{d:whitney}(c), \eqref{e:epq9}, \eqref{e:epq1}, and \eqref{e:epq3}, we obtain  \eqref{e:ve2}.
		\item Our approach to show that $E_Qf \in \sF$ is to use the criterion \eqref{e:fcrit} in Theorem \ref{t:besov}. 
		To this end, we choose $N_1 \subset \partial U$ a maximal $r$-separated subset of $\partial U$ and $N \supset N_1$ such that $N$ is a maximal $r$-separated subset of $X$ (the existence of such $N_1, N$ follows from Zorn's lemma). Let $C_P, A_P \ge 0$ be the constants associated with the Poincar\'e inequality \hyperlink{pi}{$\operatorname{PI}(\Psi)$}. Define 
		\begin{equation} \label{e:ebd}
			N_2= \{n \in N: B(n,2A_pr) \cap \partial U = \emptyset\}.
		\end{equation}
		Since $\partial U \subset \cup_{n \in N_1}B(n,r),  X =\cup_{n \in N}B(n,r)$, we have 
		\begin{equation} \label{e:eb0}
			X=\left( \cup_{n \in N_1} B(n,2(A_P r+1)r)\right) \cup \left(\cup_{n \in N_2} B(n,r)\right).
		\end{equation}
		Therefore 
		\begin{align} \label{e:eb1}
			\MoveEqLeft{\one_{ \{d(x,y) <r\}}} \nonumber \\&\le \sum_{n \in N_1}\one_{B(n,(2 A_P+3) r)}(x)\one_{B(n,(2 A_P+3) r)}(y) + \sum_{n \in N_2} \one_{B(n,2 r)}(x)\one_{B(n,2r)}(y).
		\end{align}
		By \eqref{e:vd} and \eqref{e:eb1}, we have 
		\begin{align}
			\label{e:eb2}
			\MoveEqLeft{\frac{\one_{ \{d(x,y) <r\}}}{m(B(x,r))}} \nonumber\\ &\lesssim \sum_{n \in N_1}\frac{\one_{B(n,(2 A_P+3) r)}(x)\one_{B(n,(2 A_P+3) r)}(y)}{m(B(n,(2 A_P+3) r))} + \sum_{n \in N_2} \frac{\one_{B(n,2 r)}(x)\one_{B(n,2r)}(y)}{m(B(n, 2r))}.
		\end{align}
		Let $K_0 \in (1,\infty), c_0 \in (0,1)$ be such that \eqref{e:eqpi} holds.  Let $f \in \sF(U)$ and define 
		\[
		W(E_Q(f),r):= \frac{1}{\Psi(r)}\int_X   \fint_{B(x,r)} (E_Q(f)(x)-E_Q(f)(y))^2 \,m(dy) \,m(dx). 
		\]
		By \eqref{e:eb2}, for all $r>0, f \in \sF(U)$,
		\begin{align} \label{e:eb3}
			\MoveEqLeft{W(E_Q(f),r)} \nonumber \\& \stackrel{\eqref{e:eb2}}{\lesssim} \frac{1}{\Psi(r)}\sum_{n \in N_1} \int_{B(n,(2A_P+3)r)}  \abs{E_Q(f)(x)-(E_Q(f))_{B(n,(2A_P+3)r)}}^2 \,m(dx) \nonumber \\
			&\qq+ \sum_{n \in N_2} \frac{1}{\Psi(r)} \int_{B(n,2r)}  \abs{E_Q(f)(x)-(E_Q(f))_{B(n,2r)}}^2\,m(dx).
		\end{align}
		Since $\partial V \subset \partial U$, we have $B(n,2A_Pr) \subset U \cup V$ for all $n \in N_2$.
		Noting that $E_Q(f)$ belongs to $\sF_{\loc}(U \cup V)$ and using the Poincar\'e inequality \hyperlink{pi}{$\operatorname{PI}(\Psi)$} along with metric doubling property we obtain  for all $r>0, f \in \sF(U)$, 
		\begin{align} \label{e:eb4}
			\MoveEqLeft{\sum_{n \in N_2} \frac{1}{\Psi(r)} \int_{B(n,2r)}  \abs{E_Q(f)(x)-(E_Q(f))_{B(n,2r)}}^2\,m(dx)} \nonumber \\
			&\lesssim \sum_{n \in N_2} \frac{\Psi(2r)}{\Psi(r)} \int_{B(n,2A_P r)}   d\Gamma(E_Q(f),E_Q(f)) \q \mbox{(by \hyperlink{pi}{$\operatorname{PI}(\Psi)$})} \nonumber \\
			& \lesssim \Gamma_U(f,f)(U) + \Gamma(E_Q(f),E_Q(f))(V) \q \mbox{(by \eqref{e:reg} and metric doubling)} \nonumber \\
			& \lesssim \sE_U(f,f)+ \frac{1}{\Psi(\diam(U))} \int_U f^2\,dm.
		\end{align}
		For all $0<r< \frac{c_0}{2A_P+3}\diam U$, by \eqref{e:eqpi} we have
		\begin{align} \label{e:eb5}
			\MoveEqLeft{\frac{1}{\Psi(r)}\sum_{n \in N_1} \int_{B(n,(2A_P+3)r)}  \abs{E_Q(f)(x)-(E_Q(f))_{B(n,(2A_P+3)r)}}^2 \,m(dx)}  \nonumber \\
			&\lesssim \frac{\Psi((2A_P+3)r)}{\Psi(r)} \sum_{n \in N_1} \int_{B(n,(2A_P+3)K_0r) \cap U} d\Gamma(f,f) \nonumber \\
			& \lesssim \sE_U(f,f) \q \mbox{(by \eqref{e:reg} and metric doubling).} 
		\end{align}
		Combining \eqref{e:eb3}, \eqref{e:eb4} and \eqref{e:eb5}, we obtain
		\[
		\limsup_{r\downarrow 0} W(E_Q(f),r) \lesssim \sE_U(f,f) + \frac{1}{\Psi(\diam(U))} \int_U f^2\,dm, \q \mbox{for all $f \in \sF(U)$}.
		\]
		This along with Theorem \ref{t:besov}(b) completes the proof of \eqref{e:enb}.
		\item Since $X = U  \cup V \cup \partial U$, by \eqref{e:ve1} it suffices to estimate $\Gamma(E_Q(f),E_Q(f))(B(\xi,r) \cap \partial U)$. We estimate $\Gamma(E_Q(f),E_Q(f))(B(\xi,r) \cap \partial U)$ using Theorem \ref{t:besov}(c). 
		We choose $c_0,c_1 \in (0,1), K_0, K_1 \in (1,\infty)$   such that \eqref{e:eqpi} and \eqref{e:ve1} hold.

		Let $\delta>0, r>0$. As in (c) above, we choose $N_1 \subset \partial U$ a maximal $r$-separated subset of $\partial U$ and $N \supset N_1$ such that $N$ is a maximal $r$-separated subset of $X$. Let $N_2$ be as given by \eqref{e:ebd}. 
		We denote by $ (\partial U)_\delta= \set{y\in X: d(y,\partial U) < \delta}$, the $\delta$-neighborhood of $\partial U$. 
		By \eqref{e:eb0}, we have 
		\begin{align} \label{e:emb1}
			\one_{(\partial U)_\delta}(x)\one_{ \{d(x,y) <r\}} \le \sum_{n \in N_1}\one_{B(n,(2 A_P+3) r)}(x)\one_{B(n,(2 A_P+3) r)}(y) \nonumber \\
			\qq + \sum_{\substack{ n \in N_2, \\ n \in (\partial U)_{\delta+r} }} \one_{B(n,2 r)}(x)\one_{B(n,2r)}(y).
		\end{align}
		By \eqref{e:vd} and \eqref{e:emb1}, we obtain 
		\begin{align}
			\label{e:emb2}
			\frac{\one_{(\partial U)_\delta}(x)\one_{ \{d(x,y) <r\}}}{m(B(x,r))} \lesssim \sum_{n \in N_1}\frac{\one_{B(n,(2 A_P+3) r)}(x)\one_{B(n,(2 A_P+3) r)}(y)}{m(B(n,(2 A_P+3) r))}  \nonumber \\
			\qquad + \sum_{\substack{ n \in N_2, \\ n \in (\partial U)_{\delta+r} }} \frac{\one_{B(n,2 r)}(x)\one_{B(n,2r)}(y)}{m(B(n,2r))}.
		\end{align}
		By \eqref{e:emb2}, for all $r>0, f \in \sF(U)$,
		\begin{align} \label{e:emb3}
			\MoveEqLeft{\frac{1}{\Psi(r)}\int_{(\partial U)_\delta}  \fint_{B(x,r)} (E_Q(f)(x)-E_Q(f)(y))^2 \, m(dy) \, m(dx)} \nonumber \\ & \stackrel{\eqref{e:emb2}}{\lesssim} \frac{1}{\Psi(r)}\sum_{n \in N_1} \int_{B(n,(2A_P+3)r)}  \abs{E_Q(f)(x)-(E_Q(f))_{B(n,(2A_P+3)r)}}^2 \,m(dx) \nonumber \\
			&\qq+ \sum_{\substack{ n \in N_2, \\ n \in (\partial U)_{\delta+r} }} \frac{1}{\Psi(r)} \int_{B(n,2r)}  \abs{E_Q(f)(x)-(E_Q(f))_{B(n,2r)}}^2\,m(dx).
		\end{align}
		Since $\partial V \subset \partial U$, we have $B(n,2A_Pr) \subset U \cup V$ for all $n \in N_2$.
		Combining $E_Q(f) \in \sF$ with the Poincar\'e inequality \hyperlink{pi}{$\operatorname{PI}(\Psi)$} and metric doubling property, for all $r>0, f \in \sF(U)$, we obtain
		\begin{align} \label{e:emb4}
			\MoveEqLeft{\sum_{\substack{ n \in N_2, \\ n \in (\partial U)_{\delta+r} }} \frac{1}{\Psi(r)} \int_{B(n,2r)}  \abs{E_Q(f)(x)-(E_Q(f))_{B(n,2r)}}^2\,m(dx)} \nonumber \\
			&\lesssim \sum_{\substack{ n \in N_2, \\ n \in (\partial U)_{\delta+r} }} \frac{\Psi(2r)}{\Psi(r)} \int_{B(n,2A_P r)}   d\Gamma(E_Q(f),E_Q(f)) \q \mbox{(by \hyperlink{pi}{$\operatorname{PI}(\Psi)$})} \nonumber \\
			& \lesssim \Gamma_U(f,f)(U \cap (\partial U)_{\delta +  (2A_P+1)r}) + \Gamma(E_Q(f),E_Q(f))(V \cap (\partial U)_{\delta +  (2A_P+1)r}) \\
			&\qq \mbox{(by \eqref{e:reg} and metric doubling).} \nonumber
		\end{align}
		Since $V \cap (\partial U)_{\delta +  (2A_P+1)r} \subset \cup_{n \in N_1} B_V(n,\delta + 2(A_P+1)r)$, for any $\delta,r>0$ such that $\delta + 2(A_P+1)r < c_1 \diam(U)$, we obtain 
		\begin{align} \label{e:emb4b}
			\MoveEqLeft{\Gamma(E_Q(f),E_Q(f))(V \cap (\partial U)_{\delta +  (2A_P+1)r})}  \nonumber \\& \le \sum_{n \in N_1}	 \Gamma(E_Q(f),E_Q(f))(B_V(n,\delta + 2(A_P+1)r) ) \nonumber \\
			& \lesssim \sum_{n \in N_1}	 \Gamma_U(f,f)(B_U(n,K_1(\delta + 2(A_P+1)r)) ) \q \mbox{(by \eqref{e:ve1})} \nonumber \\
			& \lesssim \Gamma_U(f,f )\left(U \cap (\partial U)_{K_1(\delta + 2(A_P+1)r)} \right)  \mbox{(by metric doubling).} 
		\end{align}
		For all $0<r< \frac{c_0}{2A_P+3}\diam U$, by \eqref{e:eqpi} we have
		\begin{align} \label{e:emb5}
			\MoveEqLeft{\frac{1}{\Psi(r)}\sum_{n \in N_1} \int_{B(n,(2A_P+3)r)}  \abs{E_Q(f)(x)-(E_Q(f))_{B(n,(2A_P+3)r)}}^2 \,m(dx)}  \nonumber \\
			&\lesssim \frac{\Psi((2A_P+3)r)}{\Psi(r)} \sum_{n \in N_1} \int_{B(n,(2A_P+3)K_0r) \cap U} d\Gamma(f,f) \nonumber \\
			& \lesssim \Gamma_U(f,f) (U  \cap (\partial U)_{(2A_P+3)K_0 r}) \q \mbox{(by \eqref{e:reg} and metric doubling).} 
		\end{align}
		Combining \eqref{e:emb3}, \eqref{e:emb4}, \eqref{e:emb4b} and \eqref{e:eb5}, we obtain 
		\begin{align} 
			\MoveEqLeft{\frac{1}{\Psi(r)}\int_{(\partial U)_\delta} \frac{1}{m(B(x,r))}  \int_{B(x,r)} (E_Q(f)(x)-E_Q(f)(y))^2 \, m(dy) \, m(dx)} \nonumber \\ & \lesssim   \Gamma_U(f,f )\left(U \cap (\partial U)_{K_1(\delta + 2(A_P+1)r)} \right)  + \Gamma_U(f,f) (U  \cap (\partial U)_{(2A_P+3)K_0 r}) \nonumber
		\end{align}
		for all $f \in \sF(U)$ and for all $\delta, r>0$ such that $$r< \frac{c_0}{2A_P+3}\diam(U)\mbox{ and }\delta + 2(A_P+1)r < c_1 \diam(U).$$ 
		Therefore by dominated convergence theorem, for all $0<\delta<c_1 \diam(U), f \in \sF(U)$, we have 
		\begin{align*}
			\MoveEqLeft{\limsup_{r \downarrow 0} \frac{1}{\Psi(r)}\int_{(\partial U)_\delta} \frac{1}{m(B(x,r))}  \int_{B(x,r)} (E_Q(f)(x)-E_Q(f)(y))^2 \, m(dy) \, m(dx)} \\
			&\lesssim   \Gamma_U(f,f )\left(U \cap (\partial U)_{2K_1\delta} \right).
		\end{align*}
		By letting $\delta \downarrow 0$ and using dominated convergence theorem, we have 
		\begin{equation}\label{e:emb6}
		\lim_{\delta \downarrow 0} \limsup_{r \downarrow 0} \frac{1}{\Psi(r)}\int_{(\partial U)_\delta}    \fint_{B(x,r)} (E_Q(f)(x)-E_Q(f)(y))^2 \, m(dy) \, m(dx)=0
		\end{equation}
		for all $f \in \sF(U)$.
		Therefore by Theorem \ref{t:besov}(c) and \eqref{e:emb6}, we have
		\[
		\Gamma(E_Q(f),E_Q(f))(\overline{B(\xi,r) \cap \partial U})=0
		\]
		for all $f \in \sF(U), \xi \in \partial U, r>0$. Letting $r \to \infty$, we obtain \eqref{e:emb}.
		\item Let $c_1 \in (0,1), C_1,K_1 \in (1,\infty)$ be chosen so that \eqref{e:ve1} holds. Let $c_2 = c_1/2$. 
		If $x \in \overline{U}, 0 < r < c_2 \diam(U,d)$ satisfies $\delta_U(x) > r$, \eqref{e:se} follows since $\Gamma(E_Q(f),E_Q(f))(B(x,r))= \Gamma(f,f)(B_U(x,r))$ by strong locality. 
		
		It suffices to consider the case $x \in \overline{U}, 0 < r < c_2 \diam(U,d)$ with $\delta_U(x) \le r$. Let $\xi \in \partial U$ be such that $d(x,\xi)= \delta_U(x) \le r$. Since $B(x,r) \subset B(\xi,2r)$, we obtain 
		\begin{align*}
			\Gamma(E_Q(f),E_Q(f))(B(x,r)) &\le \Gamma(E_Q(f),E_Q(f))(B(\xi,2r))  \\
			&\le (C_1 +1)\Gamma(f,f)(B_U(\xi,2K_1 r)) \quad \mbox{( \eqref{e:ve1} and \eqref{e:emb})} \\
			&\le (C_1 +1)\Gamma(f,f)(B_U(x,(2K_1 +1)r))\\
			& \qquad  \mbox{(since $B_U(\xi,2K_1 r) \subset B_U(x,(2K_1 +1)r)$.}
		\end{align*}
		This concludes the proof of \eqref{e:se}. \qedhere
	\end{enumerate}
\end{proof}
Next, we complete the proof of Theorem \ref{t:extend}. 
\begin{proof}[Proof of Theorem \ref{t:extend}]
Let $E$ denote the operator $E_Q$ in Proposition \ref{p:eqpi}. 
The   estimate \eqref{e:ebndloc1} follows from \eqref{e:se}.
The estimates \eqref{e:ebndloc2}  follows from Lemma \ref{l:l2b} and the same argument as the proof of  \eqref{e:se}.  The estimate \eqref{e:bndglob2}  follows from \eqref{e:ebndloc2}   by letting $r \to \infty$.
The global bound on the energy  \eqref{e:bndglob1} is a consequence  of \eqref{e:ve2} and \eqref{e:emb}. 
\end{proof}

\subsection{Energy measure  on the boundary of a uniform domain} \label{ss:energyb}
In this subsection, we prove Theorem \ref{t:benergy}. 
The following  lemma shows that $E_Q$ maps continuous function with compact support to a function that has a continuous representative with compact support.
\begin{lemma} \label{l:cont}
	In the setting of Assumption \ref{a:main}, for any $f \in C_c(X)$, the function $\wt{E_Q}f$ defined as
	\[
	\wt{E_Q}f(x)= \begin{cases}
		f(x) &\mbox{if $x \in \overline{U}$,} \\
		\sum_{B \in \wt{ \mathfrak{S}} }  \psi_B(x) \fint_{3 Q(B)} f \,dm & \mbox{if $x \in V=(\overline{U})^c$},
	\end{cases}
	\]
	satisfies $\wt{E_Q}f=E_Qf$ $m$-almost everywhere and $\wt{E_Q}f \in C_c(X)$.
\end{lemma}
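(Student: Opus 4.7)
\emph{Approach.} I would split the statement into two claims: (a) $\wt{E_Q}f=E_Qf$ $m$-almost everywhere, and (b) $\wt{E_Q}f\in C_c(X)$. For (a), the two functions agree on $U$ (both equal $f$) and on $V=(\overline{U})^c$ (both given by the Whitney sum) by construction, so they can differ only on $\partial U$. Since $U$ is a uniform domain and $m$ is doubling, Lemma~\ref{l:doubling} gives $m(\partial U)=0$, which yields (a) immediately.

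For continuity in (b), the two open sets $U$ and $V$ are routine. On $U$ we have $\wt{E_Q}f\equiv f$. On $V$, each $\psi_B$ is continuous by Lemma~\ref{l:partition}, and the bounded-overlap estimates in Proposition~\ref{p:whitney}(d,e) show that only finitely many $\psi_B$ are nonzero on a neighborhood of any $x\in V$. The delicate point is continuity at $\xi\in\partial U$. From the $\overline U$-side, continuity of $f$ gives $\wt{E_Q}f(y)=f(y)\to f(\xi)$. For $y\in V$ close to $\xi$, my plan is to show that every $B=B_V(y',s)\in\wt{\mathfrak{S}}$ with $\psi_B(y)\ne 0$ satisfies $s\le\tfrac{\eps}{1-5\eps}d(y,\xi)$ (using $y\in 6B$ together with $s=\tfrac{\eps}{1+\eps}\delta_V(y')\le\tfrac{\eps}{1+\eps}d(y',\xi)$ and $d(y',\xi)\le 6s+d(y,\xi)$), and that by Proposition~\ref{p:reflect}(a) its reflection $Q(B)=B_U(x_B,r_B)$ satisfies $r_B\asymp s$ and $d(x_B,\xi)\lesssim s$. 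Hence $3Q(B)\subset B(\xi,Cd(y,\xi))$ with $C$ independent of $B$, so by continuity of $f$ at $\xi$ the averages $\fint_{3Q(B)}f\,dm$ approach $f(\xi)$ uniformly in $B$. For $d(y,\xi)$ sufficiently small, the bound on $s$ above combined with the cutoff~\eqref{e:defStil} forces every $B\in\mathfrak{S}$ with $6B\ni y$ to lie in $\wt{\mathfrak{S}}$, so the partition-of-unity identity $\sum_B\psi_B(y)=1$ on $V$ yields $\wt{E_Q}f(y)\to f(\xi)$.

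For compactness of $\supp\wt{E_Q}f$, inside $\overline U$ the support is contained in the compact set $\supp f\cap\overline U$. On $V$, $\wt{E_Q}f(y)\ne 0$ requires some $B\in\wt{\mathfrak{S}}$ with $y\in 6B$ and $3Q(B)\cap\supp f\ne\emptyset$ (else the mean vanishes). Since $3Q(B)\subset U$ (because $3\eps/(1+\eps)<1$), this forces $\supp f\cap U\ne\emptyset$. Assuming $\partial U\ne\emptyset$ (otherwise $V=\emptyset$ and there is nothing to prove) and fixing any $w_0\in\partial U$, I would combine $\delta_U(x_B)=\tfrac{1+\eps}{\eps}r_B$ with $d(x_B,\supp f)\le 3r_B$ and $\delta_U(x_B)\le d(x_B,w_0)\le d(x_B,\supp f)+\diam(\supp f)+d(\supp f,w_0)$ to derive an upper bound $r_B\le M$ for some $M$ depending only on $\supp f,w_0,\eps$. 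By Proposition~\ref{p:reflect}(a) this also bounds $d(y',x_B)\lesssim s\asymp r_B$, so the union of all relevant $6B$ is contained in a bounded subset of $X$, which is relatively compact by the standing assumption that balls in $X$ are relatively compact.

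The hard part will be continuity at boundary points in (b): it is exactly at $\partial U$ that the piecewise definition of $\wt{E_Q}f$ changes, and the quantitative locational control of the reflected ball $Q(B)$ supplied by Proposition~\ref{p:reflect}(a) is what forces the Whitney-sum side to limit onto $f(\xi)$ and hence match the $\overline U$-side.
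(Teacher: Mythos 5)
Your proposal is correct and follows essentially the same route as the paper: agreement $m$-a.e.\ via $m(\partial U)=0$ from Lemma~\ref{l:doubling}, local finiteness of the Whitney sum for continuity on $V$, the quantitative bounds~\eqref{e:whitc} and~\eqref{e:prefa} on $Q(B)$ to push the Whitney-sum side to $f(\xi)$ at a boundary point, and boundedness of the reflected support plus precompactness of bounded sets for compact support. The paper states the boundary estimate more compactly as a single uniform bound $\sup_{y\in B_V(\xi,s)}|\wt{E_Q}f(y)-\wt{E_Q}f(\xi)|\le\sup_{z\in B_U(\xi,K_0 s)}|f(z)-f(\xi)|$, but the ingredients (including the observation that small nearby Whitney balls of $V$ are automatically in $\wt{\mathfrak{S}}$ so the partition of unity sums to $1$) are the same as yours.
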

\begin{proof}
	Since $E_Q f \equiv \wt{E_Q}f$ on $(\partial U)^c$ and $m(\partial U)=0$ by Lemma \ref{l:doubling}, we obtain $\wt{E_Q}f=E_Qf$ $m$-almost everywhere.
	
	Next, we show that $\wt{E_Q}f$ has compact support. Let $\xi \in \partial U$. Then there exists $R >0$ such that $\supp(f) \subset B(\xi,R)$. By  \eqref{e:whitc}, \eqref{e:prefa}, Lemma \ref{l:partition}, there exists $K>0$ such that $\supp(\wt{E_Q}f) \subset B(\xi,KR)$. By the metric doubling property and completeness of $(X,d)$, every closed and bounded set is compact. Therefore $\wt{E_Q}f$ has compact support.

	It remains to show that $\wt{E_Q}f \in C_c(X)$. Since $\wt{E_Q}f \equiv f$ on $\overline{U}$, it suffices to show that $\restr{\wt{E_Q}f}{U^c} \in C_c(U^c)$. 
	By Lemmas \ref{l:reflect}(b) and \ref{l:partition}(b), for any $B_V(y,s) \in \mathfrak{S}$, the sum defining $\wt{E_Qf}$ is a finite sum of continuous functions on $B_V(y,3s)$. Therefore $\wt{E_Q}f$ is continuous on $V= \cup_{B_V(y,s) \in \mathfrak{S}} B_V(y,3s)$. 
	
	It remains to show that $\restr{\wt{E_Q}f}{U^c}$ is continuous at all points in $\partial U$. To this end, consider $\xi \in \partial U$. By  \eqref{e:whitc} and \eqref{e:prefa}, there exist $s_0>0, K_0 >0$ such that 
	\[
	\sup_{y \in B_V(\xi,s)} \abs{\wt{E_Q}f(y)-\wt{E_Q}f(\xi)} \le 	\sup_{z \in B_U(\xi,K_0 s)} \abs{f(z)-f(\xi)}, \quad \mbox{for all $\xi \in \partial U, s \in (0, s_0)$.}
	\] 
	The continuity of  $\restr{\wt{E_Q}f}{U^c}$ follows from the above estimate and the continuity of $\restr{f}{\overline{U}}$.
\end{proof}

In order to prove Theorem \ref{t:benergy}, 
we recall some elementary facts about energy measures. 
For any Borel set $B \subset X$ and for any $f,g \in \sF$, by \cite[(2.1)]{Hin10}\footnote{There is an additional factor of $2$ in \cite[(2.1)]{Hin10} due to the slightly different definition of energy measure there.} we have 
\begin{equation} \label{e:approx}
	\abs{\sqrt{\Gamma(f,f)(B)} -  \sqrt{\Gamma(g,g)(B)}}^2 \le \Gamma(f-g,f-g)(B) \le \sE(f-g,f-g).
\end{equation}
For any $f \in \sF$, by \cite[Theorem 4.3.8]{CF}\footnote{In \cite[Theorem 4.3.8]{CF} the authors did not mention that quasi-continuous version of $f$ is needed to define the pushforward measure. However, this is required because changing $f$ on a set of $m$-measure zero can affect the push-forward measure. Nevertheless, due to the smoothness of energy measures, if $\wt{f}$ is \emph{any} quasi-continuous version of $f$ the measure $\nu:=\wt{f}_*(\Gamma(f,f))$ is well-defined; that is, $\nu$ is independent of the choice of the quasi-continuous representative  \cite[Theorem 2.1.3, Lemmas 2.1.4 and 3.2.4]{FOT}.}
\[
\wt{f}_*(\Gamma(f,f)) \ll \lambda,
\]
where $\lambda$ denotes the  Lebesgue measure on $\mathbb{R}$,  $\wt{f}$ is a quasi-continuous version of $f$, $\wt{f}_*\mu$ denotes the pushforward of the measure $\mu$ under the map $\wt{f}$. In particular, 
\begin{equation} \label{e:elevel}
	\Gamma(f,f)(\wt{f}_*^{-1}(\{0\}))=0, \quad \mbox{for any $f \in \sF$ and for any quasi-continuous version $\wt{f}$ of $f$.}
\end{equation}

\begin{proof}[Proof of Theorem \ref{t:benergy}]
By \eqref{e:approx} and the regularity of the Dirichlet form it suffices to show that $\Gamma(f,f)(\partial U)=0$ for all $f \in C_c(X) \cap \sF$. 
To this end, note that $f - \wt{E_Q}(f) \equiv 0$ on $\overline{U}$ and $f-\wt{E_Q}f \in C_c(X) \cap \sF$ by Lemma \ref{l:cont}. By the continuity of  $f-\wt{E_Q}f$ and \eqref{e:elevel}, we have 
\begin{equation} \label{e:est}
	\Gamma(f-\wt{E_Q}f,f-\wt{E_Q}f)(\overline{ U}) =0, \quad \mbox{for all $f \in C_c(X) \cap \sF$}.
\end{equation}
Combining this with Proposition \ref{p:eqpi} and \eqref{e:approx}, we obtain 
\begin{align*}
	{\Gamma(f,f)(\partial U)} & \stackrel{\eqref{e:emb}}{=}	\abs{\sqrt{\Gamma(f,f)(\partial U)} -  \sqrt{\Gamma(\wt{E_Q}(f),\wt{E_Q}(f))(\partial U)}}^2 \\
	& \stackrel{\eqref{e:approx}}{\le} \Gamma(f-\wt{E_Q}(f),f-\wt{E_Q}(f))(\partial U) \stackrel{\eqref{e:est}}{=}0 \quad \mbox{for all $f \in C_c(X) \cap \sF$.}\qedhere
\end{align*}
\end{proof}
\subsection{Quasicontinuous extension of a quasicontinuous function} \label{ss:compare}
Recall from Lemma \ref{l:cont} that every continuous function in the domain of the reflected diffusion admits a continuous extension to the domain of the ambient diffusion. We will show a similar property for quasicontinuous functions. 

Let $(X,d,m,\sE,\sF)$ be an MMD space and let   $U$ be a uniform domain satisfying the assumptions of Theorem \ref{t:extend}.
We recall the notion of $1$-capacity of a set. Given an MMD space $(X,d,m,\sE,\sF)$ and a Borel set $A$, we define its \emph{$1$-capacity} as
\[
\Cap_1(A)= \inf \set{\sE(f,f) + \norm{f}_2^2 : \mbox{$f \in \sF \cap \contfunc(X)$, $f \equiv 1$ on a neighborhood of $A$}},
\]
where $\norm{f}_2$ denotes the $L^2(X,m)$ norm. Let $(\overline{U},d,m,\sE_U,\sF(U))$ denote the MMD space for the corresponding reflected diffusion and let $\Cap_1^U$ denote the $1$-capacity corresponding to the reflected Dirichlet space. 
We say that an increasing sequence of closed subsets $\{F_k\}$ of $X$ is said to be \emph{nest} for $(X,d,m,\sE,\sF)$ if $\lim_{k \to \infty} \Cap_1(X \setminus F_k)=0$.
We say that a function $u \in \sF$ is quasicontinuous with respect to $(X,d,m,\sE,\sF)$ if for any $\epsilon>0$, there exists an open subset $G \subset X$ such that $\Cap_1(G)<\epsilon$ and the restriction $\restr{u}{X \setminus G}$ is finite and continuous on $X \setminus G$.

We record a useful property of the extension map that allows us to compare potential theoretic notions of the reflected Dirichlet space with that of the ambient space. 

\begin{lemma} \label{l:open}
	Let $m$ be a doubling measure on $(X,d)$ and let $U$ be a uniform domain. Let $E_Q$ be the extension map as defined in \eqref{e:EQ}.
	The extension map $E_Q: L^2(U,m) \to L^2(X,m)$ is a bounded linear operator.
	Then there exist $c_1,c_2 \in (0,1)$ such that the following holds. For any $f \in L^2(U,m), \xi \in \partial U, 0<r< c_1 \diam(U,d)$, such that whenever $f \ge 1$ $m$-almost everywhere on $B_U(\xi,r)$, we have 
	\[
	(E_Q (f))(x) \ge 1 \quad \mbox{for $m$-almost every $x \in B(\xi, c_2r)$.}
	\]
\end{lemma}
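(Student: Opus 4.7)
The plan is to split $B(\xi, c_2 r)$ into its intersections with $U$ and with $V = (\ol{U})^c$, ignoring $\partial U$ since $m(\partial U) = 0$ by Lemma \ref{l:doubling}. The first piece will be trivial: for any $c_2 \in (0,1)$ and $m$-a.e.\ $x \in B(\xi, c_2 r) \cap U \subset B_U(\xi, r)$, the definition of $E_Q$ gives $E_Q(f)(x) = f(x) \ge 1$ directly.

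All the work will be on $V$. Since $\{\psi_B : B \in \mathfrak{S}\}$ is a partition of unity on $V$ with $\supp(\psi_B) \subset 6B$ (Lemma \ref{l:partition}), I will choose $c_1, c_2 \in (0,1)$, depending only on $A$ and $\epsilon$, so that every $B = B_V(y,s) \in \mathfrak{S}$ with $x \in 6B$ automatically satisfies
\begin{enumerate}[(i)]
\item $B \in \wt{\mathfrak{S}}$, and
\item $3 Q(B) \subset B_U(\xi, r)$.
\end{enumerate}
Once (i) and (ii) are established, the hypothesis $f \ge 1$ $m$-a.e.\ on $B_U(\xi, r)$ forces $\fint_{3 Q(B)} f \, dm \ge 1$ for every such $B$, and so
\[
E_Q(f)(x) = \sum_{B \in \wt{\mathfrak{S}}} \psi_B(x) \fint_{3 Q(B)} f \, dm \;\ge\; \sum_{B \in \mathfrak{S}} \psi_B(x) \;=\; 1
\]
for $m$-a.e.\ $x \in B(\xi, c_2 r) \cap V$, completing the proof.

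To verify (i) and (ii), fix $B = B_V(y,s) \in \mathfrak{S}$ with $x \in 6B$. Since $\xi \in \partial U \subset \ol{U} = V^c$, I get $\delta_V(x) \le d(x, \xi) < c_2 r$, and combining with $\delta_V(y) = \tfrac{1+\epsilon}{\epsilon} s$ and $d(x, y) < 6s$ gives $s \le \tfrac{\epsilon}{1-5\epsilon}\delta_V(x) < \tfrac{\epsilon c_2}{1-5\epsilon} r$. Since $r < c_1 \diam(U)$, property (i), namely $s < \tfrac{\epsilon}{6A(1+\epsilon)} \diam(U)$, will hold once $c_1 c_2 < \tfrac{1-5\epsilon}{6A(1+\epsilon)}$. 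For (ii), Proposition \ref{p:reflect}(a) applied to $Q(B) = B_U(z,t)$ gives $3t < \tfrac{3(1+\epsilon)}{1-2\epsilon} s$ and $d(z, y) \le \bigl(2 + \tfrac{3A}{2}\bigr) \tfrac{1+\epsilon}{\epsilon} s$, so for any $w \in 3 Q(B)$,
\[
d(w, \xi) \le 3t + d(z, y) + d(y, x) + d(x, \xi) \le C_{A,\epsilon}\, s + c_2 r \le \Bigl(\tfrac{C_{A,\epsilon}\,\epsilon}{1-5\epsilon} + 1\Bigr) c_2 r,
\]
which will be at most $r$ after shrinking $c_2$ further in terms of $A$ and $\epsilon$.

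I do not expect any serious obstacle here; the argument is a geometric verification built entirely on the Whitney-cover properties of Proposition \ref{p:whitney} and the reflection-map bounds of Proposition \ref{p:reflect}. The one subtlety to bear in mind is condition (i): because $E_Q$ sums only over $\wt{\mathfrak{S}}$ rather than over all of $\mathfrak{S}$, the restriction $r < c_1 \diam(U)$ is essential — without it, a Whitney ball $B \in \mathfrak{S} \setminus \wt{\mathfrak{S}}$ with $\psi_B(x) > 0$ could appear in the pointwise sum defining $E_Q(f)(x)$ while contributing nothing, spoiling the identity $\sum_{B \in \wt{\mathfrak{S}}} \psi_B(x) = 1$ used in the final lower bound.
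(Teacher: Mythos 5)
Your proof is correct and follows essentially the same route as the paper: both reduce to choosing $c_1,c_2$ so that any $B\in\mathfrak{S}$ with $x\in 6B$ for $x\in B(\xi,c_2r)\cap V$ lies in $\wt{\mathfrak{S}}$ and has $3Q(B)\subset B_U(\xi,r)$, then invoke the partition of unity and $m(\partial U)=0$; the paper simply cites Proposition \ref{p:reflect}(a) for this geometric step while you carry out the arithmetic explicitly. Your observation at the end about why the restriction $r<c_1\diam(U)$ is essential (so that no $B\in\mathfrak{S}\setminus\wt{\mathfrak{S}}$ with $\psi_B(x)>0$ spoils the identity $\sum\psi_B(x)=1$) is exactly the right point to flag.
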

\begin{proof}
	By Proposition \ref{p:reflect}(a), there exist  $c_1,c_2 \in (0,1)$ such that whenever $B \in {\mathfrak{S}}$ satisfies $6B \cap B(\xi, c_2r) \neq \emptyset$ for some $\xi \in \partial U, 0<r<c_1 \diam(U,d)$, then $B \in \wt{\mathfrak{S}}$ and $3Q(B) \subset B_U(\xi,r)$. This along with Lemma \ref{l:partition}(b) and \eqref{e:EQ} implies that $(E_Q(f))(x) \ge 1$ for all $x\in V \cap B(\xi,c_2r)$. 
	The desired conclusion follows form Lemma \ref{l:doubling}.
\end{proof}

 The following proposition compares basic potential theoretic notions of $1$-capacity, nest, and quasicontinuous functions between $(X,d,m,\sE,\sF)$ and $(\overline{U},d,m,\sE_U,\sF(U))$. It can be viewed as analogues of \cite[Theorem 4.4.3(ii)]{FOT} and \cite[Theorem 3.3.8(i,iv)]{CF}.
\begin{proposition} \label{p:qcts}
	Let $(X,d,m,\sE,\sF)$ be an MMD space that satisfies the heat kernel estimate \hyperlink{hke}{$\on{HKE(\Psi)}$} for some scale function $\Psi$ and let $m$ be a doubling measure.  Let $U$ be a uniform domain $U$ and let $(\overline{U},d,m,\sE_U,\sF(U))$ denote the MMD space for reflected diffusion given in Theorem \ref{t:hkeunif}. Then we have the following.
	\begin{enumerate}[(i)]
		\item There exists $C \in (1,\infty)$ such that
		\begin{equation}
			 \Cap_1^U(A) \le \Cap_1(A) \le C \Cap_1^U(A) \quad \mbox{for any Borel set $A \subset \ol{U}$}.
		\end{equation}

		\item If $\{F_k\}$ is a nest for the MMD space $(X,d,m,\sE,\sF)$, then $\{F_k \cap \overline{U}\}$ is a nest for $(\overline{U},d,m,\sE_U,\sF(U))$.
		\item A function is quasicontinuous with respect to $(\overline{U},d,m,\sE_U,\sF(U))$ if and only if it is a restriction of a quasicontinuous function with respect to $(X,d,m,\sE,\sF)$.
	\end{enumerate}
\end{proposition}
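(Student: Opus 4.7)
The plan is to deduce all three statements from the scale-invariant bounds of Theorem \ref{t:extend}, the pointwise inequality of Lemma \ref{l:open}, and the standard theory of quasi-continuous modifications for regular Dirichlet forms.

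For part (i), the inequality $\Cap_1^U(A) \le \Cap_1(A)$ is immediate from restriction: for any $f \in \sF \cap C(X)$ with $f \equiv 1$ on an open set $W \supset A$, the restriction $f|_{\overline U}$ lies in $\sF(U) \cap C(\overline U)$, equals $1$ on the open neighborhood $W \cap \overline U$ of $A$ in $\overline U$, and by Definition \ref{d:local} satisfies $\sE_U(f|_{\overline U}, f|_{\overline U}) = \Gamma(f,f)(U) \le \sE(f,f)$, while $\|f|_{\overline U}\|_{L^2(\overline U,m)} \le \|f\|_{L^2(X,m)}$. For the reverse inequality, given $v \in \sF(U) \cap C(\overline U)$ with $v \equiv 1$ on an open $W \supset A$ in $\overline U$, replace $v$ by $0 \vee (v \wedge 1)$ so that $0 \le v \le 1$. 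Form $\wt{E_Q} v$ by the continuous formula of Lemma \ref{l:cont} (whose argument applies verbatim to bounded $v \in C(\overline U) \cap \sF(U)$), and set $g := 0 \vee ((\wt{E_Q} v) \wedge 1) \in \sF \cap C(X)$. By Theorem \ref{t:extend} together with the Markovian contraction $|g| \le |\wt{E_Q}v|$ we have $\sE(g,g) + \|g\|_{L^2(X,m)}^2 \le C(\sE_U(v,v) + \|v\|_{L^2(\overline U,m)}^2)$, where the factor $\Psi(\diam(U))^{-1}$ appearing in \eqref{e:bndglob1} is absorbed into the $L^2$ term (and trivially vanishes when $\diam(U) = \infty$). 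To check that $g \equiv 1$ on an open neighborhood of $A$ in $X$, write $W = W' \cap \overline U$ with $W'$ open in $X$: on the open set $W' \cap U$ we have $\wt{E_Q} v = v = 1$; for each $\xi \in A \cap \partial U$ pick $r_\xi > 0$ with $B_U(\xi, r_\xi) \subset W$, so that Lemma \ref{l:open} gives $E_Q v \ge 1$ $m$-almost everywhere on $B(\xi, c_2 r_\xi)$, which upgrades to $\wt{E_Q} v \ge 1$ everywhere on that ball by continuity of $\wt{E_Q} v$ and the full support of $m$. Hence $g \equiv 1$ on $(W' \cap U) \cup \bigcup_{\xi \in A \cap \partial U} B(\xi, c_2 r_\xi)$, yielding $\Cap_1(A) \le C \Cap_1^U(A)$.

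For part (ii), given a nest $\{F_k\}$ for $(X,d,m,\sE,\sF)$, each $F_k \cap \overline U$ is closed in $\overline U$, and monotonicity of capacity combined with part (i) gives $\Cap_1^U(\overline U \setminus F_k) \le \Cap_1(\overline U \setminus F_k) \le \Cap_1(X \setminus F_k) \to 0$, so $\{F_k \cap \overline U\}$ is a nest for the reflected form. The easy direction of (iii) is then immediate: if $w$ on $X$ is quasi-continuous with nest $\{F_k\}$ making $w|_{F_k}$ continuous, then $\{F_k \cap \overline U\}$ is a nest for the reflected form on which $w|_{\overline U}$ is continuous.

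The main obstacle is the converse direction of (iii): starting from a quasi-continuous $v$ on $\overline U$, we must produce a quasi-continuous $w$ on $X$ with $w|_{\overline U} = v$ \emph{pointwise}, not merely $m$-almost everywhere. The plan is to take a quasi-continuous version $w^{\#}$ of $E_Q v \in \sF$, which exists by regularity of the ambient form. By part (ii) the restriction $w^{\#}|_{\overline U}$ is quasi-continuous for the reflected form and agrees $m$-almost everywhere with $v$, so by uniqueness of quasi-continuous modifications the exceptional set $N := \{x \in \overline U : w^{\#}(x) \ne v(x)\}$ satisfies $\Cap_1^U(N) = 0$. Part (i) then yields $\Cap_1(N) = 0$ in the ambient form, so we may set $w := v$ on $\overline U$ and $w := w^{\#}$ on $V$: for any nest $\{F_k\}$ of $X$ making $w^{\#}|_{F_k}$ continuous, and any decreasing sequence of open sets $G_k \supset N$ with $\Cap_1(G_k) \to 0$, the closed sets $F_k \setminus G_k$ form a nest of $X$ on which $w = w^{\#}$ is continuous, so $w$ is quasi-continuous and satisfies $w|_{\overline U} = v$, as required.
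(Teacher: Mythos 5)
Your proof is correct and follows essentially the same route as the paper's: the key capacity inequality $\Cap_1(A)\le C\Cap_1^U(A)$ comes from the global bounds \eqref{e:bndglob1}--\eqref{e:bndglob2} of Theorem~\ref{t:extend} together with Lemma~\ref{l:open}, part (ii) is immediate from (i) by monotonicity, and part (iii) uses the quasi-continuous modification of $E_Q(f)$ together with the vanishing of $\Cap_1$ on the exceptional set. The one place you are more careful than the paper is in (i): the paper works with an $f\in\sF(U)$ satisfying $f\ge 1$ $m$-a.e.\ near $A$ (implicitly invoking the standard equivalence of capacity definitions, cf.\ FOT Chapter 2), whereas you construct an explicit $g\in\sF\cap C(X)$ with $g\equiv 1$ on an open neighborhood of $A$ by truncating, applying the continuous extension $\wt{E_Q}$ from Lemma~\ref{l:cont} (whose continuity argument is local and indeed extends to bounded continuous $v$, compact support being used there only to control the support of the extension), truncating again via the Markov property, and then upgrading the $m$-a.e.\ bound from Lemma~\ref{l:open} to a pointwise one by continuity and full support of $m$. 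This sticks to the paper's literal definition of $\Cap_1$ and is a clean alternative; both versions are valid.
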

\begin{proof}
	\begin{enumerate}[(i)]
		\item Since  $\Cap_1^U(A) \le \Cap_1(A)$ is trivial by restriction of $f \in \sF$ such that $f \ge 1$ to $\ol{U}$.
		
	 Let $\epsilon>0$ and $E_Q$ denote the extension operator defined in \eqref{e:EQ} and satisfying the properties in Theorem \ref{t:extend}.
		Since  there exists an open set $G \subset \overline{U}$ with $G \supset A$ and  $f \in \sF(U)$ such that $f \ge 1$ $m$-almost everywhere in $G$ and $\sE_U(f,f) + \int_{U}f^2\,dm < \Cap_1^U(A)  +\epsilon$. By \eqref{e:bndglob1}, \eqref{e:bndglob2}, there exists $C\in (1,\infty)$ depending only on the constants involved in the assumptions such that 
		\begin{equation} \label{e:comp1}
			\sE_1(E_Q(f),E_Q(f)) \le C \left( \sE_U(f,f)+ \int_{\ol{U}} f^2\,dm \right) < C (\Cap_1^U(A)+ \epsilon).
		\end{equation}
		By Lemma \ref{l:open}, we have that there exists an open set $\wt{G}$ in $X$ such that $A \subset G \subset \wt{G} \subset X$ and $E_Q(f) \ge 1$ $m$-almost everywhere in $\wt{G}$. Therefore by \eqref{e:comp1}, we have $\Cap_1(A) < C (\Cap_1^U(A)+\epsilon)$. Since $\epsilon>0$ is arbitrary, we obtain the desired conclusion.
		\item 
		Since $\overline{U} \setminus (F_k\cap\overline{U} )\subset  X \setminus F_k$ and $\{F_k\}$ is a nest with respect to $(X,d,m,\sE,\sF)$, we have
		\[
		\lim_{k \to \infty} \Cap_1^U \left( \overline{U} \setminus (F_k\cap\overline{U} )\right)	\le \lim_{k \to \infty} C \Cap_1\left( \overline{U} \setminus (F_k\cap\overline{U} )\right) =0.
		\]
		Therefore  $\{F_k \cap \overline{U}\}$ is a nest for $(\overline{U},d,m,\sE_U,\sF(U))$.
	The if part is an immediate consequence of (i). 
	It suffices to show that every  quasicontinuous function in $\sF(U)$ has a quasicontinuous extension  in $\sF$. To this end, let $f:\overline{U} \to \bR$ be quasicontinuous for $(\overline{U},d,m,\sE_U,\sF(U))$ and let $E_Q(f) \in \sF$ denote the extension map as  mentioned in (i). %By Theorem \ref{t:hkeunif}  the form $(\sE_U,\sF(U))$ is a regular Dirichlet form. 
	By \cite[Theorem 2.1.3]{FOT}, there exists a quasicontinuous modification $\wt{E_Q(f)}$ of $E_Q(f)$ with respect to the Dirichlet form $(\sE,\sF)$. 
	Using the if part, the restriction $\restr{\wt{E_Q(f)}(x)}{\ol{U}}$ is quasicontinuous with respect to the Dirichlet form $(\sE_U,\sF(U))$.
	Hence by \cite[Lemma 2.1.4]{FOT}, the set $A= \{x \in \overline{U}: \wt{E_Q(f)}(x) \neq f(x)\}$ satisfies $\Cap_1^U(A)=0$ and hence by (i) also satisfies $\Cap_1(A)=0$. Therefore the function
	\[
	\wh{E_Q(f)}(x)= \begin{cases}
		f(x) & \mbox{if $x \in \ol{U}$,}\\
		\wt{E_Q(f)}(x) &\mbox{if $x \in (\ol{U})^c$,}
	\end{cases}
	\]
	is a quasicontinuous extension of $f$. \qedhere
	\end{enumerate}
	
\end{proof}

\section{Heat kernel estimates} \label{s:hke}

\subsection{A simpler cutoff Sobolev inequality}
We introduce a simplified version of cutoff Sobolev inequality and show that this simpler version is equivalent to \hyperlink{cs}{$\operatorname{CS}(\Psi)$}   in Definition \ref{d:PI-CS}(b).

\begin{definition} \label{d:css}
	We say that $(X,d,m,\sE,\sF)$ satisfies the \textbf{simplified cutoff Sobolev inequality} 
	\hypertarget{csa}{$\operatorname{CSS}(\Psi)$},
	if there exist  $C_S>0,A_1, A_2, C_1>1$ such that the following holds: for all $x \in X$ and $0<R< \diam(X,d)/A_2$,
	there exists a cutoff function $\phi \in \sF$ for $B(x,R) \subset B(x,A_1R)$ such that for all $f \in \sF$,
	\begin{equation}\tag*{$\operatorname{CSS}(\Psi)$}
		\int_{B(x,A_1R)} \wt{f}^2\, d\Gamma(\phi,\phi) \le C_1 \int_{B(x,A_1R) }  d\Gamma(f,f)
		+ \frac{C_1}{\Psi(R)} \int_{B(x,A_1R) } f^2\,dm;
	\end{equation}
	where $\wt{f}$ is a quasi-continuous version of $f \in \sF$.
\end{definition}
We note that \hyperlink{csa}{$\operatorname{CSS}(\Psi)$} is different from  \hyperlink{cs}{$\operatorname{CS}(\Psi)$} in the following aspects:
\begin{enumerate}[(a)]
	\item We only consider cutoff functions for $B(x,R) \subset B(x,A_1 R)$ for $0<R< \diam(X,d)/A_2$ in \hyperlink{csa}{$\operatorname{CSS}(\Psi)$} instead of $B(x,R) \subset B(x,R+r)$ for all $0<r<R$ in \hyperlink{cs}{$\operatorname{CS}(\Psi)$}.
	\item The integrals on \hyperlink{csa}{$\operatorname{CSS}(\Psi)$}  are over the larger ball as opposed to annuli in \hyperlink{cs}{$\operatorname{CS}(\Psi)$}.
	\item The first term in the right side has coefficient $C_1$ in \hyperlink{csa}{$\operatorname{CSS}(\Psi)$} as opposed to $\frac 1 8$ in \hyperlink{cs}{$\operatorname{CS}(\Psi)$}.
\end{enumerate}
It is immediate to see that  \hyperlink{cs}{$\operatorname{CS}(\Psi)$} implies \hyperlink{csa}{$\operatorname{CSS}(\Psi)$}. 
Next, we show the converse that  \hyperlink{csa}{$\operatorname{CSS}(\Psi)$} implies   \hyperlink{cs}{$\operatorname{CS}(\Psi)$}. The proof follows from similar \emph{self-improvement} properties of cutoff Sobolev inequality in \cite[Lemma 5.1]{AB} and \cite[Proposition 5.11]{BM}.
\begin{lemma} \label{l:improve} 
	Let $(\sX,d,m,\sE,\sF)$ satisfy the volume doubling property 
	and  \hyperlink{csa}{$\operatorname{CSS}(\Psi)$} for some regular scale function  $\Psi$. Then $(X,d,m,\sE,\sF)$ satisfies \hyperlink{cs}{$\operatorname{CS}(\Psi)$}.
\end{lemma}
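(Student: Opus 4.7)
The plan is to follow the self-improvement techniques of \cite[Lemma~5.1]{AB} and \cite[Proposition~5.11]{BM}. The goal is to upgrade the cutoff of \hyperlink{csa}{$\operatorname{CSS}(\Psi)$}, which only provides a cutoff between a ball and its $A_1$-dilate with a large constant $C_1$ on the energy term, to the sharper \hyperlink{cs}{$\operatorname{CS}(\Psi)$} with cutoffs between arbitrary concentric pairs $B(x,R)\subset B(x,R+r)$, the weight $\tilde\phi^2$ on the energy term, the coefficient $1/8$, and the localization of integrals to the annular region $B(x,R+r)\setminus B(x,R)$.

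First I would carry out a covering-and-averaging construction. Partition $[R,R+r]$ into $N$ subintervals by $R_i := R + ir/N$ for $0 \le i \le N$, where $N$ will be chosen large. For each $i$, cover the annular shell $\overline{B(x,R_{i+1})}\setminus B(x,R_i)$ by a uniformly bounded number of balls of radius $r/(NA_1)$ (the uniform bound comes from volume doubling), and apply \hyperlink{csa}{$\operatorname{CSS}(\Psi)$} to each to obtain cutoffs $\{\eta_j^{(i)}\}$. Combining them in a Markovian way (e.g.\ $\phi_i := 1 \wedge \sum_j \eta_j^{(i)}$) produces a single cutoff $\phi_i$ for $B(x,R_i)\subset B(x,R_{i+1})$; then set $\phi := N^{-1}\sum_{i=0}^{N-1}\phi_i$. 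Since the supports of $\phi_i$ and $\phi_{i'}$ overlap only for neighbouring indices, Cauchy--Schwarz for energy measures yields $\Gamma(\phi,\phi)\le (C/N^2)\sum_i \Gamma(\phi_i,\phi_i)$. Summing the CSS estimates at scale $r/N$ and using the $\beta_2$-bound in \eqref{e:reg} to relate $\Psi(r/N)^{-1}$ to $\Psi(r)^{-1}$, one obtains a preliminary inequality of the form $\int_{B(x,R+r)\setminus B(x,R)} \tilde f^2\,d\Gamma(\phi,\phi) \le (C'/N) \int_{B(x,R+r)\setminus B(x,R)} d\Gamma(f,f) + (C'N^{\beta_2-1}/\Psi(r))\int_{B(x,R+r)\setminus B(x,R)} f^2\,dm$. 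The restriction of the left-hand side to the annulus is automatic by strong locality, since $\phi\equiv 1$ near $\overline{B(x,R)}$ and $\phi\equiv 0$ outside $B(x,R+r)$.

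The second step introduces the $\tilde\phi^2$ weight on the energy term and realises the $1/8$ constant. I would apply the preliminary estimate to the product $\phi f$ in place of $f$ and invoke the Leibniz rule $d\Gamma(\phi f,\phi f) = \tilde\phi^2\,d\Gamma(f,f) + \tilde f^2\,d\Gamma(\phi,\phi) + 2\tilde\phi\tilde f\,d\Gamma(\phi,f)$, absorbing the cross term via Young's inequality applied to the Cauchy--Schwarz bound for energy measures. Rearranging leaves $\int \tilde f^2\,d\Gamma(\phi,\phi)$ on the left, $\int \tilde\phi^2\,d\Gamma(f,f)$ on the right with a constant of order $C''/N$, and an $L^2$-remainder $C'''N^{\beta_2-1}\Psi(r)^{-1}\int f^2\,dm$. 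Choosing $N$ large enough then makes the energy coefficient at most $1/8$, while the $L^2$ constant stays bounded by a value depending only on the parameters of \hyperlink{csa}{$\operatorname{CSS}(\Psi)$}, the volume doubling constant, and $\beta_2$. This gives \hyperlink{cs}{$\operatorname{CS}(\Psi)$}.

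The main obstacle I expect is the bookkeeping required to ensure that the covering at scale $r/N$ produces cutoffs with uniformly controlled overlap and that each manipulation preserves the localization of the integrals to the annulus; tracking the dependence on $\beta_2$ in \eqref{e:reg} is essential so that $N$ can be selected depending only on the abstract parameters. A secondary issue is handling the boundary case when $R$ is close to $\diam(X,d)/A_2$, which can be accommodated by adjusting the constant $A_2$ appearing in \hyperlink{cs}{$\operatorname{CS}(\Psi)$}.
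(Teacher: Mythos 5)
Your route diverges from the paper's in one essential way: the paper invokes the self-improvement result of Andres--Barlow, \cite[Lemma 5.1]{AB}, as a black box. This reduces the task to producing, for each pair $B(x,R)\subset B(x,R+r)$, a single cutoff satisfying the \emph{weak} estimate \eqref{e:si1}, which has an arbitrary constant on the energy term and no $\widetilde\phi^2$ weight. The paper then proves \eqref{e:si1} by a \emph{single} covering of $B(x,R+r/2)$ at scale $r/L$, taking a $\max$ of CSS-cutoffs, and controlling $\Gamma(\phi,\phi)$ for $\phi=\max_j\varphi_j$ via \cite[Lemma 5.10]{BM}, after which strong locality and bounded overlap localize the integrals to the annulus. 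You instead try to carry out the \emph{entire} self-improvement — the coefficient $1/8$ \emph{and} the $\widetilde\phi^2$ weight — from scratch, via averaging over $N$ nested shells followed by a Leibniz-rule manipulation. That is precisely the content of \cite[Lemma 5.1]{AB}, which you acknowledge following; you are in effect reproving it rather than proving what the paper sets out to prove.

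The gaps in your sketch are exactly in that reproduction. Two smaller points: each thin shell at scale $r/(NA_1)$ can be covered with \emph{bounded overlap} but not with a ``uniformly bounded number'' of balls (the count grows with $R/r$), and to make $1\wedge\sum_j\eta_j^{(i)}$ a cutoff for $B(x,R_i)\subset B(x,R_{i+1})$ the small balls must cover $B(x,R_i)$, not the shell, while their $A_1$-dilates must stay inside $B(x,R_{i+1})$. The main issue is the Leibniz step. Applying your preliminary estimate to $\phi f$ puts $\int\widetilde\phi^2\widetilde f^2\,d\Gamma(\phi,\phi)$ on the left-hand side, which — since $\Gamma(\phi,\phi)$ is carried by the region where $0\le\widetilde\phi\le 1$ — is \emph{smaller} than the quantity $\int\widetilde f^2\,d\Gamma(\phi,\phi)$ you need to bound; meanwhile expanding $d\Gamma(\phi f,\phi f)$ on the right reintroduces $\int\widetilde f^2\,d\Gamma(\phi,\phi)$ with the same $O(1/N)$ coefficient, so the proposed rearrangement does not close and the manipulation is circular as written. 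The argument of \cite[Lemma 5.1]{AB} is more delicate than this. Since the cited lemma does exactly this job, the efficient fix is to use it, as the paper does, and spend your effort on the weak annular estimate \eqref{e:si1} via the covering-and-max construction.
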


\begin{proof}
	By \cite[Lemma 5.1]{AB}, it suffices to show that there exists $C_2>0$ such that for all $x \in X, 0<r<R$, there exists a cutoff function $\phi \in \sF$ for $B(x,R) \subset B(x,R+r)$ such that for all $f \in \sF$
	\begin{align} \label{e:si1}
		\MoveEqLeft{\int_{B(x,R+r) \setminus B(x,R)} \wt{f}^2\, d\Gamma(\phi,\phi)} \nonumber \\
		&\le  C_2 \int_{B(x,R+r) \setminus B(x,R)}   \, d\Gamma(f,f)
		+ \frac{C_2}{\Psi(r)} \int_{B(x,R+r) \setminus B(x,R)} f^2\,dm,
	\end{align}
	where $\wt{f}$ is a quasi-continuous version of $f$.

	Let $x \in X, 0<r<R$ be arbitrary. 
	If $B(x,R+r)=X$ we simply choose $\phi\equiv 1$ as the cutoff function. In this case, by strong locality we have $\Gamma(\phi,\phi) \equiv 0$.
	
	It remains to consider the case $B(x,R+r) \neq X$. In this case $2r < R+r \le \diam(X,d)$ implies $r < \diam(X,d)/2$. 
	Let $A_1, A_2,C_1>1$ denote the constants such that \hyperlink{csa}{$\operatorname{CSS}(\Psi)$} holds. 
	Define 
	\begin{equation}
		\label{e:defL}   L:= \max(A_2,4(A_1+1)).
	\end{equation}
	Let $N=\{z_i: i \in I\}$ denote a maximal $r/L$-separated subset of $X$. For each $i \in I$, let $B_i= B(z_i, r/L), B_i^*=B(z_i, A_1r/L)$. Since $r/L< \diam(X,d)/A_2$,
	by \hyperlink{csa}{$\operatorname{CSS}(\Psi)$}, there exists a cutoff function $\vp_i$ for $B_i \subset B_i^*$ such that for all $f \in \sF$
	\begin{equation}\label{e:si2}
		\int_{B_i^*} \wt{f}^2 \, d\Gamma(\vp_i,\vp_i) 
		\le C_1 \int_{B_i^*} d\Gamma(f,f) + \frac{C_1}{\Psi(r/L)} \int_{B_i^*} f^2\, dm,
	\end{equation}
	where $\wt{f}$ is a quasi-continuous version of $f$.
	Define $J \subset I$ as 
	\[
	J :=\{i \in I: z_i \in B(x,R+r/2)\}.
	\]
	Since $\cup_{i \in B_i} X$, and $N$ is a $r/L$-separated, we have $B(x,R+r/2-r/L) \subset \cup_{j \in J} B_j$ and $\cup_{j \in J} B_j^* \subset B(x,R+r/2+rA_1/L)$. Therefore the function 
	\[
	\phi= \max_{j \in J} \vp_j
	\]
	satisfies (by \cite[Theorem 1.4.2]{FOT}(i),(ii))
	\[
	\phi \in \sF,  0 \le \phi \le 1, \q \phi \equiv 1 \mbox{ on $B(x,R+r/2-r/L)$},\,\supp(\phi) \subset  B(x,R+r/2+rA_1/L).
	\]
	Note that by strong locality, we have $$\supp(\Gamma(\phi,\phi)) \subset B(x,R+r/2+rA_1/L) \setminus B(x, R+r/2-r/L).$$
	Let $\psi \in C(X) \cap \sF$ be a cutoff function for $B(x,R+r/2+rA_1/L) \setminus B(x, R+r/2-r/L) \subset B(x,R+r/2+r(A_1+1)/L) \setminus B(x, R+r/2-2r/L)$. 
	Let $$V:=B(x,R+r/2+r(A_1+1)/L) \setminus B(x, R+r/2-2r/L).$$
	For any quasi-continuous function $f \in \sF \cap L^\infty(m)$,
	\begin{align} \label{e:si3}
		\MoveEqLeft{\int_{B(x,R+r)\setminus B(x,r)} f^2 \,d\Gamma(\phi,\phi)} \nonumber \\ &= 	\int_{B(x,R+r)\setminus B(x,r)} (\psi f)^2 \,d\Gamma(\phi,\phi) \nonumber \\
		& \le \sum_{j \in J} \int_{B(x,R+r)\setminus B(x,r)}  (\psi f)^2 \,d\Gamma(\vp_j,\vp_j)   \mbox{ (by \cite[Lemma 5.10]{BM})} \nonumber\\
		&\le \sum_{j \in J} \int_{B_j^*} \one_{V} f^2 \, d\Gamma(\vp_j,\vp_j) \mbox{ (since $\supp(\vp_j) \subset B_j^*$, and  $\psi \le \one_V$).}  
	\end{align}
	Let $J_1:= \{j \in J: d(x,z_j) \ge R+r/2-(A_1+2)r/L\}$. Since $\supp(\Gamma(\vp_i,\vp_i)) \subset B_i^*$ and $V \cap B_j^*= \emptyset$ for all $j \in J \setminus J_1$, we have 
	\begin{align} \label{e:si4}
		\sum_{j \in J} \int_{B_j^*} \one_{V} f^2 \, d\Gamma(\vp_j,\vp_j)  &= \sum_{j \in J_1} \int_{B_j^*} \one_{V} f^2 \, d\Gamma(\vp_j,\vp_j) \nonumber \\
		& \le C_1 \int_X \sum_{j \in J_1} \one_{B_j^*} \,d \Gamma(f,f) + \frac{C_1}{\Psi(r/L)} \int_X \sum_{j \in J_1} \one_{B_j^*} f^2 \,dm.
	\end{align}
	By the metric doubling property and $R+r/2- (A_1+2)r/L \le d(x,z_j) < R+r/2$ for all $j \in J_1$, we have 
	\begin{equation} \label{e:si5}
		\sum_{j \in J_1} \one_{B_j^*} \lesssim \one_{B(x,R+r/2+A_1r/L) \setminus B(x,R+r/2- 2(A_1+1)r/L)} \stackrel{\eqref{e:defL}}{\lesssim} \one_{B(x,R+r) \setminus B(x,R)}.
	\end{equation}
	By \eqref{e:si3}, \eqref{e:si4}, \eqref{e:si5} and \eqref{e:reg}, we obtain \eqref{e:si1} for all $f \in \sF \cap L^\infty(m)$. By approximating any $f \in \sF$, with $f_n:= (n \wedge f)\vee (-n) \in \sF \cap L^\infty$ and letting $n \to \infty$, we obtain \eqref{e:si1} for all $f \in \sF$.
\end{proof}
\subsection{Extension map and the cutoff Sobolev inequality}
\begin{proposition} \label{p:cs}
	Let $(X,d,m,\sE,\sF)$ be an MMD space satisfying the volume doubling property and the heat kernel estimate \hyperlink{hke}{$\on{HKE}(\Psi)$}, where $\Psi$ is a scale function. Let $U \subset X$ 
	be a uniform domain. Then the MMD space $(\overline{U}, d, m, \sE_U, \sF(U))$ corresponding to the reflected process on $U$ satisfies \hyperlink{csa}{$\on{CSS}(\Psi)$}. 
\end{proposition}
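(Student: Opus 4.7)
The plan is to transfer \hyperlink{csa}{$\operatorname{CSS}(\Psi)$} from the ambient space to the reflected form by combining the ambient cutoff function with the extension operator, and cashing in the boundary vanishing of the energy measure (Theorem \ref{t:benergy}). Since $(X,d,m,\sE,\sF)$ satisfies \hyperlink{hke}{$\on{HKE(\Psi)}$}, Theorem \ref{t:hke} gives \hyperlink{cs}{$\on{CS(\Psi)}$}, which immediately implies \hyperlink{csa}{$\on{CSS(\Psi)}$} for the ambient space with some constants $A_1, A_2, C_1$. Let $E:\sF(U) \to \sF$ be the extension operator of Theorem \ref{t:extend}, with associated constants $C, K \in (1,\infty), c \in (0,1)$, and set $A_1' := K A_1$ and $A_2' := \max(A_2, A_1/c)$.

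Given $x \in \overline{U}$ and $0 < R < \diam(\overline{U},d)/A_2'$, I would apply the ambient \hyperlink{csa}{$\on{CSS(\Psi)}$} to the concentric balls $B(x,R) \subset B(x, A_1 R)$ to obtain a cutoff function $\phi \in \sF$. Since $\phi$ has compact support inside $B(x, A_1 R)$, its restriction $\phi_U := \phi|_{\overline{U}}$ lies in $\sF(U)$ by Definition \ref{d:local} and serves as a cutoff for $B_U(x,R) \subset B_U(x, A_1' R)$. Feeding $E(f) \in \sF$ into the ambient \hyperlink{csa}{$\on{CSS(\Psi)}$} with cutoff $\phi$ gives, for any $f \in \sF(U)$,
\[
\int_{B(x, A_1 R)} \widetilde{E(f)}^2\, d\Gamma(\phi, \phi) \le C_1 \int_{B(x, A_1 R)} d\Gamma(E(f), E(f)) + \frac{C_1}{\Psi(R)} \int_{B(x, A_1 R)} E(f)^2\, dm.
\]

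The next step is to transfer each side of this inequality to the reflected setting. For the left side, Theorem \ref{t:benergy} yields $\Gamma(\phi,\phi)(\partial U) = 0$, so by Definition \ref{d:local} the restriction $\Gamma(\phi,\phi)|_{\overline{U}}$ equals $\Gamma_U(\phi_U, \phi_U)$ viewed as a measure on $\overline{U}$; since $E(f) = f$ on $U$, the left side dominates $\int_{B_U(x, A_1' R)} \widetilde{f}^2\, d\Gamma_U(\phi_U, \phi_U)$. For the right side, the scale-invariant bounds \eqref{e:ebndloc1} and \eqref{e:ebndloc2} of Theorem \ref{t:extend}, applicable because $x \in \overline{U}$ and $A_1 R < c\,\diam(U,d)$, yield
\[
\Gamma(E(f),E(f))(B(x, A_1 R)) \le C\, \Gamma_U(f,f)(B_U(x, A_1' R)), \q \int_{B(x, A_1 R)} E(f)^2\, dm \le C \int_{B_U(x, A_1' R)} f^2\, dm.
\]
Combining these, the desired \hyperlink{csa}{$\on{CSS(\Psi)}$} for $(\overline{U}, d, m, \sE_U, \sF(U))$ holds with constants $A_1', A_2', C_1 C$.

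The main technical subtlety is the compatibility of quasi-continuous versions on the left-hand side: the ambient inequality involves $\widetilde{E(f)}$, quasi-continuous with respect to $(\sE,\sF)$, whereas the target inequality is phrased in terms of $\widetilde{f}$, quasi-continuous for $(\sE_U, \sF(U))$. I need to verify that these two functions agree on $\overline{U}$ outside a set of $\operatorname{Cap}^U_1$-capacity zero so that the integrals against the smooth measure $\Gamma_U(\phi_U,\phi_U)$ coincide. This will follow from comparing the two $1$-capacities on subsets of $\overline{U}$ via the extension operator (an argument along the lines of Proposition \ref{p:qcts}(i),(iii)); every ambient-polar subset of $\overline{U}$ is reflected-polar and conversely, so $\widetilde{E(f)}|_{\overline{U}}$ serves as a quasi-continuous version of $f$ for the reflected form.
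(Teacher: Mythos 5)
Your proposal is correct and follows essentially the same route as the paper: extend $f$ via $E$, feed $E(f)$ into the ambient cutoff inequality with a cutoff $\phi$ for concentric balls, restrict $\phi$ to $\overline{U}$ as the cutoff for the reflected form, and transfer the right-hand side back to $U$ using the scale-invariant bounds \eqref{e:ebndloc1}, \eqref{e:ebndloc2} of Theorem \ref{t:extend}. The only cosmetic differences are that the paper works directly from $\operatorname{CS}(\Psi)$ and does a short case split on whether $\delta_U(x) > 2R$ (which your uniform extension-based argument makes unnecessary), while you are slightly more explicit about choosing $A_2'$ so that $A_1 R < c\,\diam(U)$ and about invoking Theorem \ref{t:benergy} to identify $\Gamma(\phi,\phi)|_{\overline U}$ with the reflected energy measure of $\phi|_{\overline U}$; the paper handles the quasi-continuity compatibility in the identical way via Proposition \ref{p:qcts}(iii).
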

\begin{proof}
	By Theorem \ref{t:hke}, $(X,d,m,\sE,\sF)$ satisfies \hyperlink{cs}{$\on{CS}(\Psi)$}. Therefore there exists $C_1>0$ such that for all $x \in X, R>0, f \in \sF$, there exists a cutoff function $\phi \in \sF$ for $B(x,R) \subset B(x,2R)$ 
	\begin{equation} \label{e:cs1}
		\int_{B(x,2R) \setminus B(x,R)} \wt{f}^2 \, d\Gamma(\phi,\phi) \le \frac{1}{8} \int_{B(x,2R)} d\Gamma(f,f) + \frac{C_1}{\Psi(R)} \int_{B(x,2R)} f^2 \, dm,
	\end{equation}
	where $\wt{f}$ is a quasi-continuous version of $f$.

	By \eqref{e:cs1}, we the desired estimate in \hyperlink{csa}{$\on{CSS}(\Psi)$} for  $(\overline{U}, d, m, \sE_U, \sF(U))$ for any choice of $A_1 \ge 2$ provided $x \in \overline{U}, R> 0$ satisfies  $\delta_U(x) > 2R$.
	
	It suffices to consider the case $x \in \overline{U}, R \le \delta_U(x) \le 2R$. We choose $\phi$ such that \eqref{e:cs1} holds. Let $E: \sF(U) \to \sF$ be an extension operator as given in Theorem \ref{t:extend}.
	Let $x \in \overline{U}, 0 < r \lesssim \diam(X,d)$ and $f \in \sF(U)$.
	Let $\wt{f}$ be a quasi-continuous modification of $f$ for the Dirichlet form $(\sE_U,\sF(U))$ on $L^2(\ol{U},\restr{m}{\ol{U}})$.
	By Proposition \ref{p:qcts}(iii), there exists $\wt{E(f)}$   a quasi-continuous modification of the extension $E(f) \in \sF$ with respect to $(\sE,\sF)$ on $L^2(X,m)$ such that  $\restr{\wt{E(f)}}{\ol{U}}= \wt{f}$. We estimate
	\begin{align*}
	\MoveEqLeft{\int_{B_{\overline{U}}(x,2R)\setminus B_{\overline{U}}(x,R)} \wt{f}^2 \, d\Gamma(\phi,\phi)} \nonumber \\ &\le \int_{B(x,2R) \setminus B(x,R)} \abs{ \wt{E(f)}}^2 \, d\Gamma(\phi,\phi) \\
		&\stackrel{\eqref{e:cs1}}{\lesssim} \int_{B(x,2R)} d\Gamma(E(f),E(f)) + \frac{1}{\Psi(R)} \int_{B(x,2R)} \abs{E(f)}^2\,dm \\
		&\stackrel{\eqref{e:ebndloc1}, \eqref{e:ebndloc2}}{\lesssim} \int_{B_U(x,2KR)} d\Gamma(f,f) + \frac{1}{\Psi(R)} \int_{B_U(x,2KR)} f^2\,dm,
	\end{align*}
	where $K$ is as given in Theorem \ref{t:extend}. Since $\phi$ is a cutoff function for $B(x,R) \subset B(x,2R)$ it is a cutoff function for $B(x,R) \subset B(x,2KR)$ and by strong locality \cite[Theorem 4.3.8]{CF}, we have $\Gamma(\phi,\phi)\left((B(x,2R)\setminus B(x,R))^c\right)=0$ and hence
	 $$\int_{B_{\overline{U}}(x,2KR)\setminus B_{\overline{U}}(x,R)} \wt{f}^2 \, d\Gamma(\phi,\phi)=\int_{B_{\overline{U}}(x,2R)\setminus B_{\overline{U}}(x,R)} \wt{f}^2 \, d\Gamma(\phi,\phi).$$ Combining the above two displays, we have \hyperlink{csa}{$\on{CSS}(\Psi)$} by choosing $A_1= 2K$. 
\end{proof}
\begin{remark}
	In \cite{Lie}, the author studies killed diffusion (with Dirichlet boundary condition instead of Neumann boundary condition considered in this work). In the Dirichlet boundary condition setting, the proof of cutoff Sobolev inequality does not require any extension operator, since every function in the domain can be extended as zero outside the domain. 
\end{remark}
We have the ingredients to prove Theorem \ref{t:hkeunif}. 
\begin{proof}[Proof of Theorem \ref{t:hkeunif}] 
By Theorem \ref{t:extend} along with Lemma \ref{l:regular}, we obtain that $(\sE_U,\sF(U))$ is a regular Dirichlet form on $L^2(\overline U,m)$. By Theorems \ref{t:hke}, \ref{t:pi}, Lemma \ref{l:doubling}, and Proposition \ref{p:cs}, the MMD space   $(\overline{U},d,m,\sE_U,\sF(U))$  corresponding to reflected diffusion satisfies the heat kernel estimate  \hyperlink{hke}{$\on{HKE(\Psi)}$}.
\end{proof}

\subsection{Concluding remarks.} \label{s:conclude}
One of the most important open problems concerning heat kernel estimates is the \emph{resistance conjecture} which states that the characterization of sub-Gaussian heat kernel estimates in Theorem \ref{t:hke}(b) can be replaced with the simpler conditions Poincar\'e inequality \hyperlink{pi}{$\on{PI(\Psi)}$} and the capacity upper bound \hyperlink{cap}{$\operatorname{cap}(\Psi)_\le$} 	 \cite[Conjecture 4.15]{GHL14}. N.~Kajino\footnote{personal communication.} observed that if the resistance conjecture were true, then Theorem \ref{t:hkeunif} will have a simpler proof because it is easy to verify the capacity upper bound \hypertarget{cap}{$\operatorname{cap}(\Psi)_\le$} on uniform domain (and inner uniform domains) using the corresponding property on the ambient space.
Although there is some partial progress on the resistance conjecture, it 
remains open in general \cite{Mur23}. This connection to heat kernel estimate for reflected diffusion is further motivation to solve resistance conjecture. Following the work of Gyrya and Saloff-Coste \cite{GS}, it is natural to conjecture that Theorem \ref{t:hkeunif} should also be true for inner uniform domains (here the instrinc metric in $\ol U$ should be used for heat kernel estimates in $\ol U$). This would require developing a more instrinsic approach that does not rely on any extension operator.

Next, we compare the role of Lipschitz functions in the fields of `analysis/diffusion on fractals' \cite{Bar98,Kig01} and `analysis on metric spaces' \cite{Cheeg, Hei,HKST}.  In the latter setting, Lipschitz functions play a central role while in the former case  Lipschitz functions do not play much of a role.   J.~Heinonen \cite[Chapter 6]{Hei} writes  ``Lipschitz functions are the smooth functions of metric spaces". One justification for the above quote is that the  Sobolev spaces considered in the `analysis on metric spaces' field have a dense set of Lipschitz functions \cite[Theorem 8.2.1]{HKST}, \cite{Eri}. In fact, the proof of extension property for Newton-Sobolev spaces in \cite{BS} uses such a density of Lipschtiz functions. On the other hand, our proof of Theorem \ref{t:extend} does not use any Lipschitz functions.

In the setting of MMD spaces satisfying sub-Gaussian heat kernel bound \hyperlink{hke}{$\on{HKE(\Psi)}$}, it is known that if the space time scaling is Gaussian (that is, $\Psi(r)=r^2$), then Lipschitz functions are dense in the domain of the Dirichlet form \cite[Lemma 2.11]{ABCRST}, \cite[Remark 2.11]{KM20}.
For general space-scaling, it is believed that   Lipschitz functions are not necessarily dense in the domain of the Dirichlet form   in general. To explain this, we recall an old conjecture of Barlow and Perkins \cite{BP} concerning Brownian motion on the Sierpi\'nski gasket which is still open. In \cite[Section 9]{BP}, the authors conjecture that there are no non-constant $\alpha$-H\"older functions in the domain of the generator\footnote{the generator of the Brownian motion is a self-adjoint operator that is the analogous to the Laplacian.} for the Brownian motion on Sierpinski gasket for any  $\alpha > \frac{\log (5/3)}{\log 2} = .736966....$. One could also make an analogous conjecture  for the domain of the Dirichlet form. In particular, we conjecture that there are no non-constant Lipschitz functions (with respect to the Euclidean metric) in the domain of the Dirichlet form for the  Brownian motion on Sierpinski gasket. 

\noindent \textbf{Acknowledgments.} 
I am grateful to Naotaka Kajino for introducing me to the problem of obtaining heat kernel estimates for reflected diffusion on uniform domains, explaining the issue of proving cutoff Sobolev inequality, for the remark concerning resistance conjecture in \textsection \ref{s:conclude}, and for many helpful discussions. 
I understood that a scale-invariant version of extension property is relevant for this problem after listening to Pekka Koskela's lecture on extension domains at the workshop on Heat Kernels, Stochastic Processes and Functional Inequalities at Oberwolfach in November 2022. I thank the organizers of this workshop for the opportunity and Pekka Koskela for the inspiring lecture. I benefited from discussions with Zhen-Qing Chen on reflected diffusion. I thank Nageswari Shanmugalingam for the reference \cite{Raj}. The author is grateful to the anonymous referees and Ryosuke Shimizu for their helpful comments and suggestions.

%% if required, the content of .bbl file can be included here once bbl is generated
%%\input sn-article.bbl

\end{document}